\newtheorem{theorem}{Theorem}[section]
\newtheorem{proposition}[theorem]{Proposition}
\newtheorem{lemma}[theorem]{Lemma}
\newtheorem{corollary}[theorem]{Corollary}
\newtheorem{remark}[theorem]{Remark}
\newtheorem{example}[theorem]{Example}
\newtheorem{question}[theorem]{Question}
\newtheorem{definition}[theorem]{Definition}
\numberwithin{equation}{section}
\def\NAT@def@citea{\def\@citea{\NAT@separator}}
\begin{document}
	
	\noindent
	\begin{center}
			{\Large \bf On the characterizations of $\mathcal{D}_{k} \backslash \mathcal{D}_{k-2}$}
	\end{center}
	\vspace*{7mm}
	
	\noindent
	{\large \bf Jing Zeng$^{\,a}$, Lihua You$^{\,b,*}$, Xinghui Zhao$^{\,b}$, Hong-Jian Lai$^{\,c,d,*}$}
	\noindent
	
	\vspace{5mm}
	
	\noindent
	$^{a}$ College of Cryptology and Cyber Science, Nankai University, Tianjin, 300350, P. R. China.
	
	\noindent
	$^{b}$ School of Mathematical Sciences, South China Normal University,  Guangzhou, 510631, P. R. China.
	
	\noindent
	$^{c}$ School of Mathematics and Systems Science, Guangdong Polytechnic Normal University, Guangzhou 510665, P. R. China.
	
	\noindent
	$^{d}$ Department of Mathematics, West Virginia University, Morgantown, WV, USA.\\
	E-mail: {\tt jing.zeng@yeah.net(Jing Zeng)}, \,{\tt ylhua@scnu.edu.cn(Lihua You)}, 
	
	\quad\quad{\tt 2025010152@m.scnu.edu.cn(Xinghui Zhao)}, 
	
	 \quad\quad{\tt hjlai2015@hotmail.com(Hong-jian Lai)} \\[2mm]
	$^*$ Corresponding author
	\noindent
	
	\vspace{7mm}
	
	\noindent
	{\bf Abstract} \
	\noindent
	The determinant of a tournament $T$, denoted by $\det(T)$, is defined as the determinant of the skew-adjacency matrix of $T$. It is well-known that $\det(T)$ is equal to $0$ if $n$ is odd, and $\det(T)$ is the square of an odd integer if $n$ is even. For a positive odd integer $k$, let $\mathcal{D}_k$ be the set of tournaments whose all subtournaments have determinant at most $k^2$. Former studies showed that for $k \in \{1,3,5\}$, a tournament $T \in \mathcal{D}_k \backslash \mathcal{D}_{k-2}$ ($T \in \mathcal{D}_1$ when $k=1$) if and only if $T$ is switching equivalent to a transitive blowup of $L_{k+1}$, where $L_{k+1}$ is a tournament of order $k+1$ with a specific structure. For $k \geq 7$, no characterization results are known. It was shown in [Discrete Math. 349 (2) (2026) 114766] that there exists a tournament $T \in \mathcal{D}_{7}\backslash \mathcal{D}_{5}$ which can not be switching equivalent to a transitive blowup of $L_{8}$. A natural problem (raised in the aforementioned paper as an open problem) is to characterize tournaments in $\mathcal{D}_{k} \backslash \mathcal{D}_{k-2}$ that can be switching equivalent to a transitive blowup of $L_{k+1}$ for $k \geq 7$.
	
	To address this problem and to further explore the structural properties of tournaments in $\mathcal{D}_{k}$, we introduce CR tournaments, strong CR tournaments, basic tournaments and $Z$-matrices, and investigate their properties. We use these properties to characterize those tournaments $T \in \mathcal{D}_{k} \backslash \mathcal{D}_{k-2}$ where $T$ contains a subtournament switching isomorphic to a basic strong CR tournament in $\mathcal{D}_{k} \backslash \mathcal{D}_{k-2}$. This result implies former characterizations of $\mathcal{D}_3\backslash \mathcal{D}_1$ and $\mathcal{D}_5 \backslash \mathcal{D}_3$.
	Using $Z$-matrices, we also show that for even $n$, $L_{n}$ is a basic strong CR tournament, and thus solve the open problem posed in  [Discrete Math. 349 (2) (2026) 114766].
	\\[2mm]
	\noindent
	{\bf Keywords:} \ Tournament; CR tournament; Skew-adjacency matrix; Determinant; Transitive blowup
	
	\noindent
	{\bf MSC:} \ 05C20, 05C50, 05C75

	\section{Introduction}
	
	\hspace{1.5em}A \textit{tournament} is a directed graph with exactly one arc between each pair of vertices. We denote a tournament of order $n$ by $n$-tournament. Let $T$ be an $n$-tournament with vertex set $\{v_1,\ldots,v_n\}$. If the arc between $v_i$ and $v_j$ is directed from $v_i$ to $v_j$ (resp. from $v_j$ to $v_i$), we say $v_i$ dominates $v_j$ (resp. $v_i$ is dominated by $v_j$), and write $v_i\rightarrow v_j$ (resp. $v_i \leftarrow v_j$). In this paper, we use $M^{\mathsf{T}}$ to denote the transpose of a matrix $M$. The \textit{adjacency matrix} of an $n$-tournament $T$, with respect to the vertex ordering $v_1,v_2,\ldots,v_n$,  is the $n\times n$ matrix $A_{T}=[a_{ij}]$ in which $a_{ij}=1$ if $v_i\rightarrow v_j$ in $T$ and $a_{ij}=0$ otherwise, and the \textit{skew-adjacency matrix} of an $n$-tournament $T$, is the $n\times n$ matrix $S_{T}=A_{T}-A_{T}^{\mathsf{T}}$. By the definition, $S_T$ is a skew-symmetric matrix, say, $S_T+S^{\mathsf{T}}_T=\mathbf{0}$. The \textit{determinant} of a tournament $T$, denoted by $\det(T)$, is defined as the determinant of $S_T$. It is easy to see that the determinant of $S_T$ remains constant under different vertex orderings. A well-known result of Cayley \cite{Pfaffian} showed that the determinant of a skew-symmetric matrix of even order is the square of its Pfaffian. Based on the properties of Pfaffian, for an $n$-tournament, Fisher and Ryan \cite{DET} showed that $\det(T)=0$ if $n$ is odd and $\det(T)$ is the square of an odd integer if $n$ is even.
	
	Throughout this paper, we use $V(T)$ to denote the vertex set of tournament $T$, and $|V(T)|$ to denote the number of vertices of $T$.  For $ X \subseteq V(T) $, we denote by $ T[X] $ the subtournament of $ T $ induced by $ X $.
	
	A tournament is a transitive tournament if it contains no directed cycles, or equivalently, if it is possible to order its vertices as $v_1,\ldots,v_n$ such that $v_i\rightarrow v_j$ if and only if $i<j$. Moreover, an equivalent assertion to $T$ contains no directed cycles is that $T$ contains no $3$-cycles. 
	
	A \textit{switch} of a tournament $ T $, with respect to a subset $ W $ of $ V=V(T) $, is the tournament obtained by reversing all the arcs between $ W $ and $V\backslash W$ (If $W=\emptyset$ or $W=V$, then the switch of $T$ is $T$ itself). If $T'$ is a switch of $T$, we say $T'$ and $T$ are switching equivalent. Two tournaments $T_1$ and $T_2$ with the same vertex set are switching equivalent if and only if their skew-adjacency matrices are $\{\pm1\}$-diagonally similar\cite{TWOGRAPH}. Hence, the determinant of a tournament $T$ is an invariant under switching operation. Moreover, if $T_1$ is switching equivalent to $T_2$ and $T_2$ is switching equivalent to $T_3$, then $T_1$ is switching equivalent to $T_3$.
	
	\begin{definition}\label{defswitchiso}
		A tournament $T_1$ is switching isomorphic to $T_2$ if there exists a switch of $T_1$, denoted by $T_1'$, such that $T_1'$ is isomorphic to $T_2$.
	\end{definition}
	
	Clearly, if $T_1$ is switching isomorphic to $T_2$, then $T_2$ is switching isomorphic to $T_1$. In particular, if $T_1$ and $T_2$ are switching equivalent, then $T_1$ is switching isomorphic to $T_2$.
	
	Let $X$ and $Y$ be two non-empty vertex sets. If $u\rightarrow v$ for any $u\in X$ and any $v\in Y$, we write $X\rightarrow Y$.

\begin{definition}{\rm(\!\!\cite{Dfive})}\label{DefBLWOUP}
	Let $T$ be an $n$-tournament with vertices $v_1,\ldots,v_n$, $H_1,\ldots,H_n$ be tournaments. A tournament $T(H_1,\ldots,H_n)$ is obtained by replacing each vertex $v_i$ with the tournament $H_i$ for each $1\leq i\leq n$, and adding arcs between $V(H_i)$ and $V(H_j)$ such that $V(H_i)\rightarrow V(H_j)$ if $v_i\rightarrow v_j$ for $1\leq i,j \leq n$, we call such $T(H_1,\ldots,H_n)$ is a blowup of $T$ with respect to $H_1,\ldots,H_n$.	
\end{definition}

	Follow the notation in Definition \ref{DefBLWOUP}, if $H_i$ is transitive for each $1\leq i\leq n$ and $a_i$ = $|V(H_i)|$, we call $T(H_1,\ldots,H_n)$ the $transitive$ $(a_1,\ldots,a_n)$-$blowup$ of $T$ (transitive blowup of $T$ for short), denoted by $T(a_1,\ldots,a_n)$\cite{BLOWUP}. Moreover, if $a_i=2$ for some $1\leq i\leq n$ and $a_j=1$ for each $j\in \{1,2,\ldots,n\}\backslash \{i\}$, we say $T(a_1,a_2,\ldots,a_n)$ is a \textit{1-transitive blowup} of $T$, where $a_1+a_2+\cdots +a_n=n+1$.
	
	For a positive odd integer $k$, let $\mathcal{D}_k$ be the set consisting of tournaments whose all subtournaments have determinant at most $k^2$ \cite{DTHREE} (note that $\det(T)$ is equal to $0$ or the square of an odd integer). Equivalently, a tournament $T\in \mathcal{D}_k$ if and only if all the principal minors of $S_T$ do not exceed $k^2$. Clearly, $\mathcal{D}_{k}$ is closed under switching operations.
	
	Let $k\geq 3$. The notation $T\in \mathcal{D}_k\backslash \mathcal{D}_{k-2}$ implies $T\in \mathcal{D}_k$ and $T\notin \mathcal{D}_{k-2}$. It is easy to see that $\mathcal{D}_{k} = (\mathcal{D}_{k} \backslash \mathcal{D}_{k-2}) \cup (\mathcal{D}_{k-2} \backslash \mathcal{D}_{k-4}) \cup \cdots \cup (\mathcal{D}_{3} \backslash \mathcal{D}_{1}) \cup \mathcal{D}_1$ for odd $k$. For convenience, we use $\mathcal{D}_1\backslash\mathcal{D}_{-1}$ to denote the set $\mathcal{D}_1$ in this paper.
	
	A \textit{diamond} is a $4$-tournament consisting of a vertex dominating or dominated by a 3-cycle, and a $4$-tournament is a diamond if its determinant is $9$ \cite{DIACHA}.
	
	The tournament $L_n$ $(n \geq 2)$ is an $n$-tournament in which there exists an ordering of vertices, $v_1,v_2,\ldots,v_n$, such that $L_n[\{v_1,v_2,\ldots,v_{n-1}\}]$ is a transitive tournament with $v_1\rightarrow v_2\rightarrow \cdots \rightarrow v_{n-1}$, and $v_n\rightarrow v_i(1\leq i\leq n-1)$ if $i$ is odd, $v_n\leftarrow v_i(1\leq i\leq n-1)$ otherwise \cite{Dfive}. Clearly, $L_2$ is a transitive tournament, $L_4$ is a diamond. If a tournament $T$ is isomorphic to $L_n$, we say $T$ is $L_n$. $L_2$, $L_4$ and $L_6$ are shown in Figure \ref{figLn}.
	
	\begin{figure}[h]
		\centering
		\includegraphics[scale=0.35]{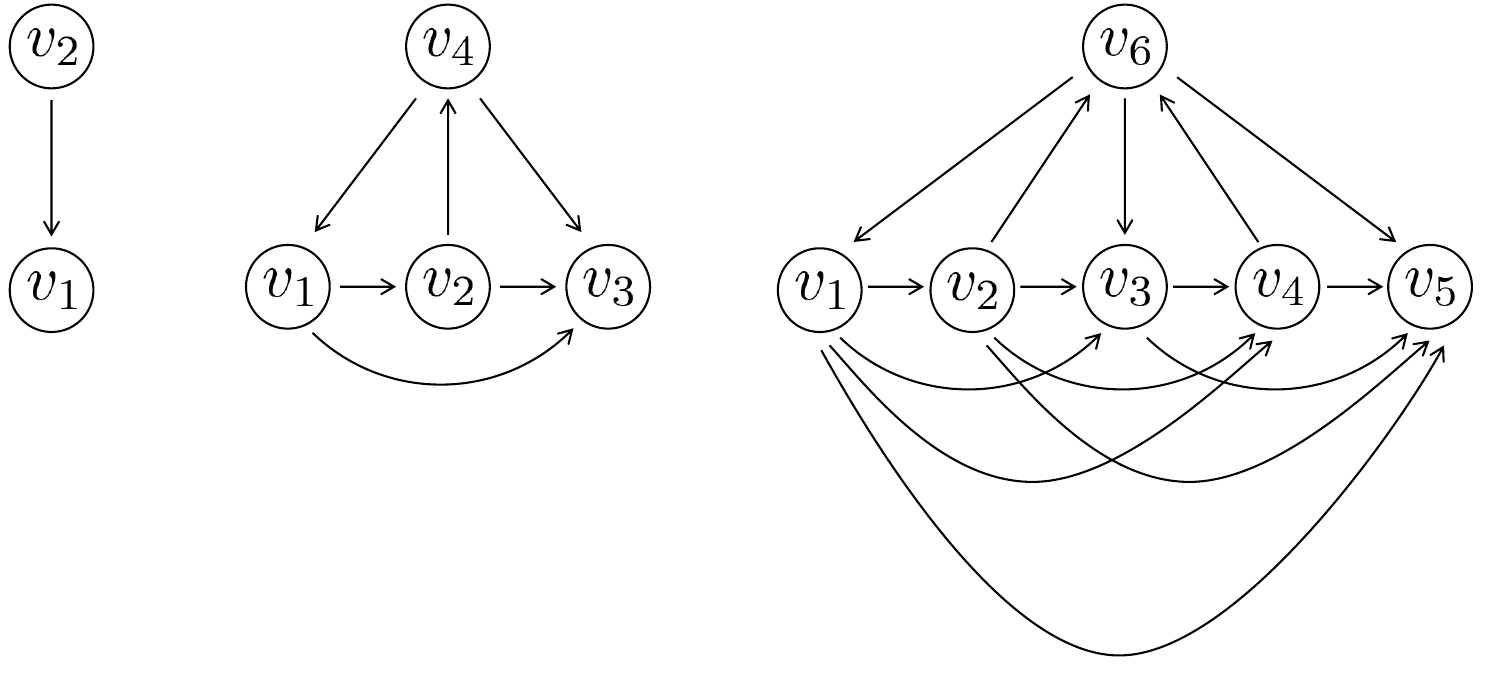}
		\caption{$L_2$, $L_4$ and $L_6$} \label{figLn}
	\end{figure}
	
	\begin{theorem}{\rm(\!\!\cite{Dfive})}\label{Lntheorem1}
		Let $n$ be a positive even integer. Then $\det(L_n)=(n-1)^2$, and $L_n\in \mathcal{D}_{n-1}\backslash \mathcal{D}_{n-3}$.
	\end{theorem}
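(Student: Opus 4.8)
The plan is to obtain $\det(L_n)$ from the Pfaffian of its skew-adjacency matrix and then to read off the determinants of \emph{all} subtournaments of $L_n$ from the same computation. Fix the vertex ordering $v_1,\ldots,v_n$ from the definition of $L_n$, and let $J_m$ denote the $m\times m$ skew-symmetric matrix with every entry above the diagonal equal to $1$; this is exactly the skew-adjacency matrix of the transitive tournament $v_1\to v_2\to\cdots\to v_m$. With respect to the chosen ordering, $S_{L_n}$ has the bordered form $\bigl(\begin{smallmatrix} J_{n-1} & b\\ -b^{\mathsf T} & 0\end{smallmatrix}\bigr)$ with $b=(b_1,\ldots,b_{n-1})^{\mathsf T}$ and $b_i=(-1)^i$ (so $b_i=1$ precisely when $v_i\to v_n$, i.e.\ when $i$ is even). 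Since $S_{L_n}$ has even order $n$, Cayley's theorem gives $\det(L_n)=\operatorname{Pf}(S_{L_n})^{2}$, so everything reduces to evaluating this Pfaffian.

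The first step is an auxiliary identity: for every odd $p\ge 1$ and every $c\in\{\pm1\}^{p}$,
\[
\det\begin{pmatrix} J_p & c\\ -c^{\mathsf T} & 0\end{pmatrix}=\Bigl(\textstyle\sum_{i=1}^{p}(-1)^{i+1}c_i\Bigr)^{2},
\]
whereas for even $p$ the left-hand matrix has odd order and hence determinant $0$. To prove this I would first check, by a one-line induction expanding the Pfaffian along the last row, that $\operatorname{Pf}(J_m)=1$ for every even $m$ (base case $m=2$; deleting the last index together with any other index of $J_m$ leaves a copy of $J_{m-2}$, and the alternating sum of the signs over the remaining indices equals $1$). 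Applying the same row-expansion of the Pfaffian along the bordering row of $\bigl(\begin{smallmatrix} J_p & c\\ -c^{\mathsf T} & 0\end{smallmatrix}\bigr)$, and using that deleting the bordering index together with an index $i\le p$ leaves $J_{p-1}$ (which has $\operatorname{Pf}=1$ since $p$ is odd), yields $\operatorname{Pf}=\sum_{i=1}^{p}(-1)^{i+1}c_i$; squaring gives the identity.

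Now apply this. Since $n$ is even, $n-1$ is odd, and the identity with $p=n-1$, $c=b$ gives $\operatorname{Pf}(S_{L_n})=\sum_{i=1}^{n-1}(-1)^{i+1}(-1)^{i}=-(n-1)$, hence $\det(L_n)=(n-1)^2$. For $L_n\in\mathcal D_{n-1}$, classify the subtournaments $L_n[X]$. If $v_n\notin X$, then $L_n[X]$ is an induced subtournament of the transitive tournament on $v_1,\ldots,v_{n-1}$, hence transitive, with determinant $0$ or $1$. If $v_n\in X$ and $|X|=q+1$, then $L_n[X\setminus\{v_n\}]$ is transitive on $q\le n-1$ vertices, so after relabelling its skew-adjacency matrix is $J_q$ and $S_{L_n[X]}$ is $J_q$ bordered by a $\{\pm1\}$-vector; by the identity $\det(L_n[X])$ equals $0$ (if $q$ is even) or $\bigl(\sum_{i=1}^{q}(-1)^{i+1}c_i\bigr)^2\le q^2\le (n-1)^2$ (if $q$ is odd). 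In all cases $\det(L_n[X])\le(n-1)^2$, so $L_n\in\mathcal D_{n-1}$. Finally, $L_n$ is a subtournament of itself and $\det(L_n)=(n-1)^2>(n-3)^2$, so $L_n\notin\mathcal D_{n-3}$; combined with $L_n\in\mathcal D_{n-1}$ this gives $L_n\in\mathcal D_{n-1}\setminus\mathcal D_{n-3}$ (for $n=2$ the assertion is just $L_2\in\mathcal D_1$, which the same argument covers).

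I expect the main obstacle to be purely bookkeeping: getting the sign conventions in the two Pfaffian expansions right, so that $\operatorname{Pf}(J_m)=+1$ with the natural orientation and the coefficients in the expansion along the bordering row come out as $(-1)^{i+1}$ — a wrong global sign is harmless after squaring, but a wrong relative sign would destroy the bound $\bigl|\sum_{i=1}^{p}(-1)^{i+1}c_i\bigr|\le p$. The one genuinely structural point that needs care is the claim that every subtournament of $L_n$ containing $v_n$ has the bordered shape used above; this holds because deleting vertices from a transitive tournament (keeping the induced order) again yields a transitive tournament, so its skew-adjacency matrix really is a copy of some $J_q$. A Pfaffian-free alternative is to reduce $S_{L_n}$ to near-triangular form by subtracting consecutive rows and columns of the transitive block, but the Pfaffian route seems cleaner and simultaneously delivers the subtournament bound.
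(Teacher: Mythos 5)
Your proposal is correct. Note that the paper does not prove Theorem~\ref{Lntheorem1} at all --- it is imported from \cite{Dfive} as a black box --- so there is no in-paper proof to compare against; judged on its own, your argument is sound and self-contained. The key identity $\det\bigl(\begin{smallmatrix} J_p & c\\ -c^{\mathsf T} & 0\end{smallmatrix}\bigr)=\bigl(\sum_{i=1}^{p}(-1)^{i+1}c_i\bigr)^{2}$ for odd $p$ checks out (e.g.\ for $p=3$ the $4\times4$ Pfaffian $af-be+cd$ gives $c_1-c_2+c_3$), the specialization $c_i=(-1)^i$ yields $\operatorname{Pf}(S_{L_n})=-(n-1)$ and hence $\det(L_n)=(n-1)^2$, and your case split on whether $v_n\in X$ correctly reduces every subtournament either to a transitive tournament (determinant $0$ or $1$ by the analogue of Proposition~\ref{dettranone}) or to the same bordered form with determinant at most $q^2\le(n-1)^2$; together with $\det(L_n)=(n-1)^2>(n-3)^2$ this gives $L_n\in\mathcal D_{n-1}\backslash\mathcal D_{n-3}$. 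Your single-border Pfaffian identity is the one-row analogue of the machinery the paper builds later for its own purposes (the Schur-complement computation in Proposition~\ref{toolforS} and Corollary~\ref{crxone}, which handle a transitive block bordered by \emph{two} rows); you could equally have obtained your identity by taking the Schur complement of $J_p$ in the $(p+1)\times(p+1)$ bordered matrix, but the Pfaffian route is clean and the relative sign alternation --- the only delicate point, as you note --- is pinned down by your small-case verification.
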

	
	A tournament is a \textit{local order} if it contains no diamonds \cite{Localorder}. A tournament is a local order if and only if it is switching equivalent to a transitive tournament \cite{TRANSI}. A $5$-tournament contains exactly 0 or 2 diamonds. Based on these facts, the authors in \cite{DTHREE} characterized the sets $\mathcal{D}_1$ and $\mathcal{D}_{3}$ as follows.
	
	\begin{theorem}{\rm(\!\!\cite{DTHREE})}\label{D1}
		Let $T$ be a tournament. Then the following assertions are equivalent:
		\item{\rm(i)} $T\in \mathcal{D}_1$.
		\item{\rm(ii)} $T$ is switching equivalent to a transitive tournament.
		\item{\rm(iii)} $T$ contains no diamonds.
	\end{theorem}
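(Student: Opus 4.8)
The plan is to observe that (ii)$\,\Leftrightarrow\,$(iii) is immediate from the cited structure theory, and then to collapse the three conditions into one equivalence by proving the two implications (i)$\,\Rightarrow\,$(iii) and (ii)$\,\Rightarrow\,$(i). The only part that requires an actual computation is showing that transitive tournaments lie in $\mathcal{D}_1$.

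For (ii)$\,\Leftrightarrow\,$(iii): by the definition recalled above, a tournament is a local order exactly when it contains no diamond, and by the result of \cite{TRANSI} a tournament is a local order if and only if it is switching equivalent to a transitive tournament; chaining these two facts gives the equivalence of (ii) and (iii) at once. For (i)$\,\Rightarrow\,$(iii): a diamond is a $4$-tournament of determinant $9$, so if $T$ contained a diamond then that $4$-vertex subtournament would already have determinant $9 > 1 = 1^2$, contradicting $T \in \mathcal{D}_1$; hence $T \in \mathcal{D}_1$ forces $T$ to be diamond-free.

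For (ii)$\,\Rightarrow\,$(i): since $\mathcal{D}_1$ is closed under switching, it suffices to prove that every transitive tournament lies in $\mathcal{D}_1$, and since every subtournament of a transitive tournament is again transitive (being $3$-cycle-free), it is enough to show $\det(T_m) \le 1$, where $T_m$ denotes the transitive $m$-tournament. For odd $m$ this determinant is $0$ by the Fisher--Ryan parity result. For even $m$, order the vertices transitively as $v_1 \rightarrow v_2 \rightarrow \cdots \rightarrow v_m$, so that every entry $s_{ij}$ of $S_{T_m}$ with $i < j$ equals $1$; by Cayley's theorem $\det(T_m) = \mathrm{Pf}(S_{T_m})^2$, and expanding the Pfaffian along the first row we note that deleting $v_1$ together with any $v_j$ leaves precisely the transitive $(m-2)$-tournament in its induced (transitive) order. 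Thus $\mathrm{Pf}(S_{T_m}) = \big(\sum_{j=2}^{m}(-1)^{j}\big)\,\mathrm{Pf}(S_{T_{m-2}}) = \mathrm{Pf}(S_{T_{m-2}})$, the alternating sum over the odd number $m-1$ of terms (beginning with $+1$) being $1$; together with the base case $\mathrm{Pf}(S_{T_2}) = 1$ this gives $\det(T_m) = 1$ for every even $m$. Hence every transitive tournament, and therefore everything switching equivalent to one, lies in $\mathcal{D}_1$, which closes the cycle (i)$\,\Rightarrow\,$(iii)$\,\Rightarrow\,$(ii)$\,\Rightarrow\,$(i).

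I expect the main (indeed the only) obstacle to be this determinant evaluation $\det(T_m) = 1$ and, within it, getting the Pfaffian recursion and its signs straight; one may alternatively obtain $\det(T_m) = 1$ by a short induction using the row operations replacing row $i$ by (row $i$)$\,-\,$(row $i+1$), which turns $S_{T_m}$ into a matrix whose first $m-1$ rows are $e_i + e_{i+1}$ and whose determinant can then be read off directly. Everything else in the argument is forced by the definitions and by the cited characterizations of local orders and of determinants of skew-symmetric matrices.
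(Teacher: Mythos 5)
Your proof is correct, but note that the paper does not prove this theorem at all: it is imported verbatim from \cite{DTHREE}, with the two facts you use for (ii)$\Leftrightarrow$(iii) (diamond-free $=$ local order, and local order $\Leftrightarrow$ switching equivalent to transitive, from \cite{TRANSI}) stated immediately before it precisely as the ingredients on which the cited characterization rests. So there is no in-paper argument to compare against; what you have written is a self-contained proof. Your (i)$\Rightarrow$(iii) and (ii)$\Rightarrow$(iii)$\Rightarrow$(ii) steps are exactly the forced ones, and the only substantive content is $\det(\mathbb{T}_m)=1$ for even $m$ (the paper itself later records this as Proposition 6.20, again by citation to \cite{DETTRANSI,SEDTOUR}). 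Your Pfaffian recursion checks out: deleting $v_1$ and $v_j$ from the transitively ordered tournament leaves the transitive $(m-2)$-tournament, the expansion $\mathrm{Pf}(S_{T_m})=\sum_{j=2}^{m}(-1)^{j}s_{1j}\,\mathrm{Pf}(S_{T_{m-2}})$ has all $s_{1j}=1$, and the alternating sum of the $m-1$ (odd) signs starting at $+1$ is $+1$, giving $\mathrm{Pf}(S_{T_m})=\mathrm{Pf}(S_{T_2})=1$. The row-operation alternative you sketch also works. The one point worth making explicit if this were written out in full is the closure of transitive tournaments under taking subtournaments (so that \emph{all} principal minors, not just the full determinant, are at most $1$), which you do state; with that, the argument is complete.
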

	
	\begin{theorem}{\rm(\!\!\cite{DTHREE})}\label{Dthree}
		Let $T$ be a tournament. Then the following assertions are equivalent:
		\item{\rm(i)} $T\in \mathcal{D}_3$.
		\item{\rm(ii)} $T$ is switching equivalent to a transitive tournament or a transitive blowup of a diamond.
		\item{\rm(iii)} All the $6$-subtournaments of $T$ are in $\mathcal{D}_3$.
	\end{theorem}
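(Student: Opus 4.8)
The plan is to establish the cycle $(i)\Rightarrow(iii)\Rightarrow(ii)\Rightarrow(i)$. The implication $(i)\Rightarrow(iii)$ is immediate: a $6$-subtournament of $T$ is a subtournament of $T\in\mathcal D_3$, hence lies in $\mathcal D_3$. So the two substantive directions are $(ii)\Rightarrow(i)$ and $(iii)\Rightarrow(ii)$.

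For $(ii)\Rightarrow(i)$: since $\mathcal D_3$ is closed under switching and every transitive tournament has determinant at most $1$, it suffices to prove that every subtournament of a transitive blowup of a diamond has determinant at most $9$. The main ingredient is a reduction formula for determinants of transitive blowups: $\det\bigl(T(a_1,\dots,a_n)\bigr)=\det\bigl(T[\{i : a_i\text{ is odd}\}]\bigr)$. I would prove this by iterating one observation about $1$-transitive blowups: if $v'$ is a ``twin'' of $v$ with $v\rightarrow v'$, then applying the row operation $R_v\to R_v-R_{v'}$ followed by the column operation $C_v\to C_v-C_{v'}$ to the skew-adjacency matrix and expanding along the resulting sparse row yields $\det\bigl(T(H_1,\dots)\bigr)=\det(T-v)$; duplicating a vertex inside a transitive block of size $a$ repeatedly then drives $a$ down to $1$ if $a$ is odd and erases that block if $a$ is even. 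Granting this, any subtournament of a transitive blowup $D(a_1,a_2,a_3,a_4)$ of a diamond is itself a transitive blowup $D[X]\bigl((b_i)_{i\in X}\bigr)$ for some $X\subseteq\{1,2,3,4\}$, whose determinant equals $\det(D[Y])$ for some $Y\subseteq X$; since $D[Y]$ is a transitive tournament or a $3$-cycle when $|Y|\le 3$ (determinant $\le 1$) and is the diamond itself when $Y=\{1,2,3,4\}$ (determinant $9$), all these determinants are at most $9$, so $T\in\mathcal D_3$.

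For $(iii)\Rightarrow(ii)$ I would use induction on $|V(T)|$. If $T$ contains no diamond, then $T\in\mathcal D_1$ by Theorem \ref{D1}, so $T$ is switching equivalent to a transitive tournament and we are done; hence assume $T$ contains a diamond. The cases $|V(T)|\le 6$ are handled by a direct finite analysis — note that when $|V(T)|=6$ the hypothesis $(iii)$ just says $\det(T)\le 9$, which together with the presence of a diamond pins $T$ down up to switching. For the inductive step with $n=|V(T)|\ge 7$: pick a vertex $v_0$ outside a fixed diamond (possible since $n\ge 5$); then $T-v_0$ still contains that diamond, so by induction $T-v_0$ is switching equivalent to a transitive blowup of a diamond, and after replacing $T$ by a switch we may assume $T-v_0=D(V_1,V_2,V_3,V_4)$ with $V(T)\setminus\{v_0\}=V_1\cup V_2\cup V_3\cup V_4$. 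It then remains to analyse how $v_0$ attaches to $V_1\cup\cdots\cup V_4$ and to conclude that, after possibly switching at $\{v_0\}$ (and re-normalising the partition if necessary), $T$ is again a transitive blowup of a diamond.

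This last analysis is where the real work lies, and I expect it to be the main obstacle. The strategy: for any transversal $\{w_1,w_2,w_3,w_4\}$ of the partition (which induces a diamond) and any further vertex $u\in V_j$, the set $\{w_1,w_2,w_3,w_4,u,v_0\}$ induces a $6$-subtournament of $T$, which by $(iii)$ must lie in $\mathcal D_3$, i.e.\ must have determinant at most $9$ (a $6$-tournament outside $\mathcal D_3$ has determinant at least $25$). These $6$-tournaments are exactly ``a diamond with one part doubled, together with $v_0$''; running over all choices of transversal and of $u$ and using the reduction formula, one shows that if $v_0$ were attached in a way incompatible with being ``parallel'' to one of the four parts, some such $6$-subtournament would be switching isomorphic to $L_6$ (or to another $6$-tournament of determinant $\ge 25$), contradicting Theorem \ref{Lntheorem1} and $(iii)$. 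Hence, up to switching at $v_0$, the vertex $v_0$ has the same domination relation to $V(T)\setminus(V_i\cup\{v_0\})$ as every vertex of some part $V_i$; a further check, again via $6$-subtournaments and the reduction formula, forces $T[V_i\cup\{v_0\}]$ to be switching equivalent to a transitive tournament (otherwise one produces a $6$-subtournament with determinant $\ge 25$), so that $T$ is switching equivalent to a transitive blowup of a diamond. The delicate part is the bookkeeping — enumerating the attachment patterns of $v_0$ up to switching at $v_0$, producing the correct $6$-subtournament witness for each forbidden pattern, and correctly re-normalising the partition and the switching set along the way; the rest is routine.
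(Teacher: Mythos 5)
The paper does not prove this theorem; it is imported verbatim from \cite{DTHREE}, so there is no internal proof to compare against. Judged on its own terms, your proposal is solid on the two easier implications but leaves the substantive one unproven. The implication (i)$\Rightarrow$(iii) is indeed immediate, and your argument for (ii)$\Rightarrow$(i) is essentially complete: the twin-elimination identity (remove a covertex pair $\{v,v'\}$ via $R_v\to R_v-R_{v'}$, $C_v\to C_v-C_{v'}$ and a double cofactor expansion, giving $\det(T')=\det(T'-\{v,v'\})$) is correct, it iterates to $\det\bigl(T(a_1,\dots,a_n)\bigr)=\det\bigl(T[\{i:a_i\text{ odd}\}]\bigr)$, and every subtournament of a transitive blowup of a diamond $D$ is a transitive blowup of some $D[X]$, so its determinant is $\det(D[Y])\in\{0,1,9\}$. (Your displayed identity ``$\det(T(H_1,\dots))=\det(T-v)$'' is stated sloppily --- the reduction removes the pair $\{v,v'\}$, dropping the order by two --- but the surrounding text shows you mean the right thing.)

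The genuine gap is (iii)$\Rightarrow$(ii), which is where the entire content of the theorem sits, and your write-up there is a plan rather than a proof. Two things are missing. First, the base case $|V(T)|\le 6$ is dismissed as ``a direct finite analysis,'' but classifying all $6$-tournaments of determinant at most $9$ that contain a diamond, up to switching, is a nontrivial finite verification that must actually be carried out (or correctly reduced). Second, and more seriously, the inductive step's case analysis of how $v_0$ attaches to $T-v_0=D(V_1,V_2,V_3,V_4)$ is only asserted: you must show that whenever $v_0$ cannot (after switching at $\{v_0\}$) be absorbed into some part $V_i$ with $T[V_i\cup\{v_0\}]$ transitive and with the correct uniform relations to the other parts, some explicit $6$-subtournament has determinant at least $25$. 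This dichotomy is not obviously exhaustive --- for instance, non-uniformity of $v_0$ toward a part is permitted precisely for the part $v_0$ joins, the blowup decomposition of $T-v_0$ need not be unique, and each forbidden attachment pattern needs its own witness $6$-set --- so until the enumeration is done one cannot be sure no configuration slips through. As it stands, the proposal establishes the theorem only modulo this central combinatorial argument.
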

	
	The authors \cite{Dfive} characterized the set $\mathcal{D}_5$, then $\mathcal{D}_1$, $\mathcal{D}_3$ in terms of $L_2$, $L_4$ as follows.
	
	\begin{theorem}{\rm(\!\!\cite{Dfive})}\label{D5character}
		Let $ T $ be an $n$-tournament $(n\geq 2)$. Then we have 
		\item{\rm(i)} $T\in \mathcal{D}_1$ if and only if $T$ is switching equivalent to a transitive blowup of $L_2$.
		\item{\rm(ii)} $T\in \mathcal{D}_3\backslash \mathcal{D}_1$ if and only if $T$ is switching equivalent to a transitive blowup of $L_4$.
		\item{\rm(iii)} $T\in \mathcal{D}_3$ if and only if $T$ is switching equivalent to a transitive blowup of $L_k$, where $k\in\{2,4\}$.
		\item{\rm(iv)} $T\in \mathcal{D}_5\backslash \mathcal{D}_3$ if and only if $T$ is switching equivalent to a transitive blowup of $L_6$.
		\item{\rm(v)} $T\in \mathcal{D}_5$ if and only if $T$ is switching equivalent to a transitive blowup of $L_k$, where $k\in\{2,4,6\}$.
	\end{theorem}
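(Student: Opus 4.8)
The plan is to deduce (i)--(iii) from Theorems~\ref{D1} and~\ref{Dthree}, and to treat (iv)--(v) by a separate structural argument, (v) being the union of (iv) with (iii). For (i), note that for $n\ge 2$ a transitive $n$-tournament is exactly the transitive blowup $L_2(1,n-1)$, whereas every transitive blowup of the single arc $L_2$ is transitive; hence ``switching equivalent to a transitive tournament'' and ``switching equivalent to a transitive blowup of $L_2$'' describe the same tournaments, and (i) is Theorem~\ref{D1}. For (ii) I would record two elementary facts: (a) a vertex dominating a $3$-cycle and a vertex dominated by a $3$-cycle are switching isomorphic (switch at the apex), so, as $L_4$ is a diamond, every diamond is switching isomorphic to $L_4$; and (b) switching a tournament at the union of the blowup classes lying over a single vertex of the base commutes with forming the blowup, so every transitive blowup of a diamond is switching equivalent to a transitive blowup of $L_4$. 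Together with Theorem~\ref{Dthree}(ii) this shows that $T\in\mathcal{D}_3$ iff $T$ is switching equivalent to a transitive tournament or to a transitive blowup of $L_4$; since a transitive blowup of $L_4$ contains $L_4$ (take one vertex from each class), hence a diamond, Theorem~\ref{D1} puts it outside $\mathcal{D}_1$. This gives (ii), and (iii) is the union of (i) and (ii).

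For the ``if'' direction of (iv) I would establish the lemma that a transitive blowup of any member of $\mathcal{D}_k$ again lies in $\mathcal{D}_k$; it suffices to handle a single $1$-transitive blowup, i.e.\ replacing a vertex $v$ of a tournament $U$ by an arc $v'\to v''$ where $v'$ and $v''$ both inherit the in- and out-neighbourhoods of $v$. An induced subtournament $X$ of the blowup falls into three cases: $X$ avoids $v''$, so $X$ is a subtournament of $U$; $X$ contains $v''$ but not $v'$, so $X$ is isomorphic to a subtournament of $U$ with $v''$ playing the role of $v$; or $X$ contains both $v'$ and $v''$, in which case subtracting the $v'$-row from the $v''$-row and the $v'$-column from the $v''$-column of the skew-adjacency matrix, followed by two Laplace expansions, yields $\det(X)=\pm\det(U[X\setminus\{v',v''\}])$. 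In all cases $|\det(X)|\le k^2$, proving the lemma. Since $L_6\in\mathcal{D}_5$ by Theorem~\ref{Lntheorem1}, every transitive blowup of $L_6$ lies in $\mathcal{D}_5$; and it lies outside $\mathcal{D}_3$ because it contains $L_6\notin\mathcal{D}_3$, again by Theorem~\ref{Lntheorem1}. As $\mathcal{D}_3$ and $\mathcal{D}_5$ are switching invariant, the ``if'' direction of (iv) follows.

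For the ``only if'' direction of (iv), take $T\in\mathcal{D}_5\setminus\mathcal{D}_3$ and induct on $m=|V(T)|$. Since $T\notin\mathcal{D}_3$, while tournaments of orders $1,2,3,5$ have determinant $0$ or $1$ and those of order $4$ have determinant $1$ or $9$, we get $m\ge 6$, and for $m=6$ necessarily $\det(T)=25$; the base case is then the assertion that, up to switching, the unique $6$-tournament of determinant $25$ is $L_6$, which I would verify by a direct analysis of $6$-tournaments. For $m>6$, Theorem~\ref{Dthree}(iii) provides a $6$-subtournament of $T$ outside $\mathcal{D}_3$, which lies in $T-u$ for some vertex $u$; hence $T-u\in\mathcal{D}_5\setminus\mathcal{D}_3$, and by induction, after a switching of $T$, we may assume $T-u=L_6(a_1,\dots,a_6)$. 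It then remains to show that either $u$ attaches as a twin of a vertex already present, so that $T=L_6(a_1,\dots,a_i+1,\dots,a_6)$, or $T$ is switching equivalent to a transitive blowup of $L_6$ via some other switching. I expect \emph{this vertex-extension step to be the main obstacle}: one must enumerate the arc patterns from $u$ to the six classes, up to the switchings of $T$ that fix $T-u=L_6(a_1,\dots,a_6)$, and show that each non-conforming pattern produces a small subtournament of determinant at least $49$, contradicting $T\in\mathcal{D}_5$. This is exactly the content of ``$L_6$ is a strong CR tournament'' in the terminology of the paper, and carrying it out cleanly likely requires either a well-chosen family of witnessing subtournaments or a finite computer-assisted check.
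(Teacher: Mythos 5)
First, a framing point: Theorem~\ref{D5character} is imported from \cite{Dfive}; the present paper does not reprove it, but Section~\ref{sec-proofoftheorem} shows how parts (ii) and (iv) follow from Theorem~\ref{maintheorembasic} combined with Propositions~\ref{containsL4} and~\ref{containsL6} and the (finitely checked) fact that $L_4$ and $L_6$ are basic strong CR tournaments. Your derivation of (i)--(iii) from Theorems~\ref{D1} and~\ref{Dthree}, your proof sketch of the blowup lemma (the paper's Lemma~\ref{blowupclass}), your base case for (iv) (the paper's Lemma~\ref{T6L6}), and the ``if'' directions of (iv)--(v) are all correct and consistent with the intended route.

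The genuine gap is in the inductive step of the ``only if'' direction of (iv). After arranging $T-u=L_6(a_1,\ldots,a_6)$, the fact you need is that this \emph{arbitrary} transitive blowup of $L_6$ is a CR tournament, i.e.\ that any attachment of $u$ keeping $T\in\mathcal{D}_5$ forces $u$ to be a covertex or revertex of an existing vertex. That is strictly more than ``$L_6$ is a strong CR tournament'': the strong CR property concerns only $1$-transitive blowups, which are $7$-vertex tournaments and hence a finite check, whereas in your induction the $a_i$ are unbounded, so no single finite or computer-assisted verification covers the step as you have set it up, and you would also have to rule out $u$ attaching non-uniformly \emph{within} a class, not just enumerate patterns ``to the six classes.'' The paper closes exactly this gap with the global argument of Step 3 of Theorem~\ref{maintheorembasic}: all vertices outside one fixed $6$-vertex copy of $L_6$ are treated simultaneously, each is shown to be a CR vertex for that copy using only that $L_6$ itself is a CR tournament, Proposition~\ref{lem3maintheorem} partitions them into six classes, and Propositions~\ref{lem1maintheorem} and~\ref{lem2maintheorem} together with the finite strong CR check on orders $7$ and $8$ force the classes to be correctly arranged and internally transitive. (Deducing ``all transitive blowups of $L_6$ are CR'' from the characterization, as in Step 2 via Proposition~\ref{lem4maintheorem}, would be circular for your purposes.) Some structural argument of this kind must be supplied to make your vertex-extension step go through.
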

	
		Theorem \ref{Lntheorem1} shows that $\mathcal{D}_k\backslash\mathcal{D}_{k-2}$ is not an empty set, and a natural question arised from Theorem \ref{D5character} is that whether a tournament $T\in \mathcal{D}_k\backslash \mathcal{D}_{k-2}$ if and only if $T$ is switching equivalent to a transitive blowup of $L_{k+1}$ for $k\geq 7$. The following example tells us the answer is no. 
		
	\begin{example}{\rm(\!\!\cite{Dfive})}
		Let $T'$ be a $6$-tournament with  
		$
			\scriptsize{S_{T'}= \left[\begin{array}{rrrrrr}
					0 & 1 & 1 & 1 & -1 & -1 \\
					-1 & 0 & 1 & 1 & -1 & 1 \\
					-1 & -1 & 0 & 1 & 1 & 1 \\
					-1 & -1 & -1 & 0 & -1 & -1 \\
					1 & 1 & -1 & 1 & 0 & 1 \\
					1 & -1 & -1 & 1 & -1 & 0
				\end{array}\right]}.\label{exampleT}
		$	Clearly, $T' \in \mathcal{D}_{7} \backslash \mathcal{D}_{5}$,   and $T'$ can not be switching equivalent to a transitive blowup of $L_8$. 
	\end{example}	

	Based on Theorem \ref{Lntheorem1}, Theorem \ref{D5character} and Example 1.7, Zeng and You in \cite{Dfive} proposed the following question for further study.
	
	\begin{question}{\rm(\!\!\cite{Dfive})}\label{queanyDktwo}
		Let $k\text{ }(\geq 7)$ be odd. What is the necessary and sufficient condition for a tournament $T \in \mathcal{D}_{k} \backslash \mathcal{D}_{k-2}$ to be switching equivalent to a transitive blowup of $L_{k+1}$?
	\end{question}

	If $T\in \mathcal{D}_k\backslash \mathcal{D}_{k-2}$, and adding any vertex that does not conform to the structure of $T$ results in a new tournament $T'=T+\{u\}$ with a larger principal minor (that is, $T'\notin \mathcal{D}_k$), then $T$ can be considered to be in some kind of ``critical" state in this sense, which also means it possesses a speical extremal determinant property. Inspired by this, we study a special class of tournaments, which we define as \textit{CR tournaments}.
	 
	To solve the problem above and to further explore the structural properties of tournaments in $\mathcal{D}_{k}$, in this paper, we define CR tournaments, \textit{strong CR tournaments} and \textit{basic tournaments} (their definitions will be provided in the next section), show some conclusions on these tournaments, and prove that the properties of being ``a CR tournament'', ``a strong CR tournament'' and ``a basic tournament'' are invariants under switching operation (see Section \ref{sec-invariant}). Our main results (see Sections \ref{sec-proofoftheorem}, \ref{sec-allLn} and \ref{sec-solutionquestion}) are the following:
	\begin{itemize}
		\item In Section \ref{sec-proofoftheorem}, we establish a theorem on basic strong CR tournaments (see Theorem \ref{maintheorembasic}), which shows a relationship, in terms of $\mathcal{D}_{k} \backslash \mathcal{D}_{k-2}$, between a basic strong CR tournament $H$ and these tournaments containing a subtournament which is switching isomorphic to $H$.
		\item In Section \ref{sec-allLn}, we show that $L_n$ is a basic strong CR tournament for even $n \geq 4$ by using a specialized technique (see Theorem \ref{evenLnbasicCR} and Subsection \ref{subsecLn-pfeven}), and further deduce that all $L_n$ are strong CR tournaments (see Theorem \ref{allLnstrongCR}). Specifically, we introduce a class of matrices, denoted by $Z$-matrices (see Subsection \ref{subsecLn-table}), and complete the proof of Theorem \ref{evenLnbasicCR} by using $Z$-matrices. The proof presented in Subsection \ref{subsecLn-pfeven} is the most technical part of this paper.
		\item In Section \ref{sec-solutionquestion}, based on the results presented in Sections \ref{sec-proofoftheorem} and \ref{sec-allLn}, we give a complete solution to Question \ref{queanyDktwo} (see Theorem \ref{solutionque}), and propose some questions for further research.
	\end{itemize}
	
	\section{CR tournaments and basic tournaments}\label{sec-CR}
	\hspace{1.5em}Let $T$ be a tournament, $u_1,u_2\in V(T)$. We write $\theta_T(u_1,u_2)=1$ and $\theta_T(u_2,u_1)=-1$ if $u_1\rightarrow u_2$ in $T$.
	
	\begin{definition}\label{defcoorre}
		Let $T$ be a tournament, $u_1,u_2\in V(T)$. 
		\item{\rm(i)} When $|V(T)|=2$, $u_1$ and $u_2$ are called covertices in $T$ and revertices in $T$.
		\item{\rm(ii)} When $|V(T)|\geq 3$, $u_1$ and $u_2$ are called covertices in $T$ if $\theta_T(u_1,v)=\theta_T(u_2,v)$ for any $v\in V(T)\backslash\{u_1,u_2\}$; $u_1$ and $u_2$ are called revertices in $T$ if $\theta_T(u_1,v)=-\theta_T(u_2,v)$ for any $v\in V(T)\backslash\{u_1,u_2\}$.
		
		Furthermore, if $u_1$ and $u_2$ are covertices or revertices in $T$, we say $u_1$ and $u_2$ are CR-associated vertices in $T$.
	\end{definition}

		The following proposition and corollaries can be obtained directly from Definition \ref{defcoorre}, so we omit the proofs.
	\begin{proposition}\label{jugcoreone}
		Let $T$ be a tournament of order $n\geq 3$, $\{u_1,u_2\}\subset V(T)$. Then 
		\item{\rm(i)} $u_1$ and $u_2$ are covertices in $T$ if and only if $\theta_T(u_1,v)\cdot\theta_T(u_2,v)=1$ for any $v\in V(T) \backslash \{u_1,u_2\}$.
		\item{\rm(ii)} $u_1$ and $u_2$ are revertices in $T$ if and only if $\theta_T(u_1,v)\cdot\theta_T(u_2,v)=-1$ for any $v\in V(T) \backslash \{u_1,u_2\}$.
	\end{proposition}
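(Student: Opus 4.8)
The plan is to derive both statements directly from Definition~\ref{defcoorre} together with one elementary fact about signs: for $a,b\in\{1,-1\}$ one has $a=b$ if and only if $ab=1$, and $a=-b$ if and only if $ab=-1$ (indeed $1\cdot 1=(-1)(-1)=1$ while $1\cdot(-1)=(-1)\cdot 1=-1$).

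First I would observe that, by the definition of $\theta_T$, for every $v\in V(T)\backslash\{u_1,u_2\}$ each of $\theta_T(u_1,v)$ and $\theta_T(u_2,v)$ belongs to $\{1,-1\}$, hence so does their product $\theta_T(u_1,v)\cdot\theta_T(u_2,v)$. For part~(i): since $n\geq 3$, Definition~\ref{defcoorre}(ii) says that $u_1$ and $u_2$ are covertices in $T$ exactly when $\theta_T(u_1,v)=\theta_T(u_2,v)$ for all $v\in V(T)\backslash\{u_1,u_2\}$. Applying the sign fact with $a=\theta_T(u_1,v)$ and $b=\theta_T(u_2,v)$ shows that, for a fixed such $v$, this equality holds if and only if $\theta_T(u_1,v)\cdot\theta_T(u_2,v)=1$; quantifying over all admissible $v$ gives the stated equivalence. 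Part~(ii) is proved in exactly the same way, replacing ``$\theta_T(u_1,v)=\theta_T(u_2,v)$'' by ``$\theta_T(u_1,v)=-\theta_T(u_2,v)$'' and ``$=1$'' by ``$=-1$'', and invoking the definition of revertices in place of covertices.

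There is essentially no obstacle here: the argument is a one-line unwinding of the definitions, which is why the paper omits it. The only mild point of care is that the range of $v$ in the statement is precisely $V(T)\backslash\{u_1,u_2\}$, which coincides with the range over which Definition~\ref{defcoorre}(ii) imposes its condition for tournaments of order $n\geq 3$; in particular no separate treatment of the arc between $u_1$ and $u_2$ is needed, and the hypothesis $n\geq 3$ is used only to guarantee that this clause of the definition (rather than the $|V(T)|=2$ clause) applies.
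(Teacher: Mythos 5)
Your proof is correct and matches the paper's intent exactly: the paper explicitly omits the proof on the grounds that the proposition follows directly from Definition~\ref{defcoorre}, and your argument is precisely that routine unwinding, including the correct observation that the hypothesis $n\geq 3$ merely selects the relevant clause of the definition.
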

	
	\begin{corollary}\label{jugcoretwo}
		Let $T$ be a tournament of order $n\geq 3$, $\{u_1,u_2\}\in V(T)$. Then $u_1$ and $u_2$ are CR-associated vertices in $T$ if and only if $(\theta_T(u_1,v_1)\cdot\theta_T(u_2,v_1))\cdot (\theta_T(u_1,v_2)\cdot\theta_T(u_2,v_2))=1$ for any $\{v_1,v_2\}\subseteq V(T)\backslash \{u_1,u_2\}$.
	\end{corollary}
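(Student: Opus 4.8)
The plan is to translate both sides of the claimed equivalence into a single statement about a $\{\pm1\}$-valued function and then invoke Proposition \ref{jugcoreone}. For each $v\in V(T)\backslash\{u_1,u_2\}$ set $f(v)=\theta_T(u_1,v)\cdot\theta_T(u_2,v)$, so that $f(v)\in\{1,-1\}$. By Proposition \ref{jugcoreone}(i), $u_1$ and $u_2$ are covertices in $T$ exactly when $f(v)=1$ for every such $v$, and by Proposition \ref{jugcoreone}(ii) they are revertices exactly when $f(v)=-1$ for every such $v$. Hence $u_1$ and $u_2$ are CR-associated in $T$ if and only if $f$ is constant on $V(T)\backslash\{u_1,u_2\}$, and the right-hand condition of the corollary, namely $(\theta_T(u_1,v_1)\cdot\theta_T(u_2,v_1))\cdot(\theta_T(u_1,v_2)\cdot\theta_T(u_2,v_2))=f(v_1)\cdot f(v_2)=1$ for all $\{v_1,v_2\}\subseteq V(T)\backslash\{u_1,u_2\}$, is merely a reformulation of ``$f$ is constant'' for a function taking only the values $\pm1$.

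For the forward direction I would argue directly: if $u_1$ and $u_2$ are CR-associated, then by Proposition \ref{jugcoreone} either $f\equiv1$ or $f\equiv-1$ on $V(T)\backslash\{u_1,u_2\}$, and in either case the product of any two values of $f$ equals $1$.

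For the converse I would split on whether $V(T)\backslash\{u_1,u_2\}$ is a singleton. If $n=3$ this set has a single element, the quantified product condition holds vacuously, and for the unique vertex $v$ the signs $\theta_T(u_1,v)$ and $\theta_T(u_2,v)$ are either equal (so $u_1,u_2$ are covertices) or opposite (so they are revertices); either way they are CR-associated. If $n\geq4$, fix any $w\in V(T)\backslash\{u_1,u_2\}$; the hypothesis gives $f(v)\cdot f(w)=1$, hence $f(v)=f(w)$, for every $v\in V(T)\backslash\{u_1,u_2\}$, so $f$ is constant, and Proposition \ref{jugcoreone} then shows that $u_1,u_2$ are covertices (if $f\equiv1$) or revertices (if $f\equiv-1$), hence CR-associated.

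There is no genuine obstacle here; the argument is a short unwinding of the definitions combined with Proposition \ref{jugcoreone}. The only point deserving explicit mention is the degenerate case $n=3$, where the ``for any $\{v_1,v_2\}$'' quantifier ranges over an empty set of pairs, so one must note separately that any two vertices of a $3$-tournament are automatically CR-associated; this is why the paper can safely phrase the corollary with that quantifier for all $n\geq3$.
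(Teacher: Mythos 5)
Your proof is correct and matches the paper's intent exactly: the paper omits the proof, stating only that the corollary ``is directly obtained from Proposition \ref{jugcoreone},'' and your argument is precisely that unwinding. One tiny remark: the paper explicitly allows $v_1=v_2$ in the quantifier, so for $n=3$ the condition is not vacuous but trivially satisfied (since $f(v)^2=1$); this does not affect your conclusion.
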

	
	\begin{corollary}\label{subcore}
		Let $T$ be a tournament, $T_s$ be a subtournament of $T$, $\{u_1,u_2\}\subseteq V(T_s)$. Then we have
		\item{\rm(i)} $u_1$ and $u_2$ are covertices in $T_s$ if  $u_1$ and $u_2$ are covertices in $T$.
		\item{\rm(ii)} $u_1$ and $u_2$ are revertices in $T_s$ if  $u_1$ and $u_2$ are revertices in $T$.
	\end{corollary}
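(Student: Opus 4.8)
The plan is to reduce everything to the product criterion of Proposition \ref{jugcoreone}, exploiting the elementary fact that passing to an induced subtournament changes the orientation of no arc among the retained vertices. Concretely, since $T_s$ is a subtournament of $T$ with $\{u_1,u_2\}\subseteq V(T_s)$, we have $\theta_{T_s}(u_i,v)=\theta_T(u_i,v)$ for $i\in\{1,2\}$ and every $v\in V(T_s)\backslash\{u_1,u_2\}\subseteq V(T)\backslash\{u_1,u_2\}$; in particular $\theta_{T_s}(u_1,v)\cdot\theta_{T_s}(u_2,v)=\theta_T(u_1,v)\cdot\theta_T(u_2,v)$ for each such $v$, which is exactly the identity recorded just before the statement.

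For part (i), assume $u_1$ and $u_2$ are covertices in $T$. First I would dispose of the degenerate case $|V(T_s)|=2$, where $u_1$ and $u_2$ are covertices in $T_s$ immediately by Definition \ref{defcoorre}(i). Otherwise $|V(T_s)|\geq 3$, hence also $|V(T)|\geq 3$; by Proposition \ref{jugcoreone}(i) applied in $T$ we get $\theta_T(u_1,v)\cdot\theta_T(u_2,v)=1$ for every $v\in V(T)\backslash\{u_1,u_2\}$, and restricting to $v\in V(T_s)\backslash\{u_1,u_2\}$ together with the identity above yields $\theta_{T_s}(u_1,v)\cdot\theta_{T_s}(u_2,v)=1$ for every $v\in V(T_s)\backslash\{u_1,u_2\}$. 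Applying Proposition \ref{jugcoreone}(i) now in $T_s$ gives that $u_1$ and $u_2$ are covertices in $T_s$, as desired.

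Part (ii) is carried out by the identical argument with the value $1$ replaced by $-1$ throughout and Proposition \ref{jugcoreone}(ii) in place of Proposition \ref{jugcoreone}(i), again with the case $|V(T_s)|=2$ handled directly by Definition \ref{defcoorre}(i).

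There is essentially no obstacle here: the statement follows immediately from the orientation-preservation identity and Proposition \ref{jugcoreone}. The only point requiring attention is the bookkeeping around the base case $|V(T_s)|=2$, where Proposition \ref{jugcoreone} (stated for order $\geq 3$) does not apply and one must fall back on the defining clause of Definition \ref{defcoorre}(i).
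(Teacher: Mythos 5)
Your proof is correct and follows exactly the route the paper indicates (the paper omits a formal proof, noting only that the result follows from the arc-preservation identity $\theta_{T_s}(u_1,v)\cdot\theta_{T_s}(u_2,v)=\theta_{T}(u_1,v)\cdot\theta_{T}(u_2,v)$ together with Proposition \ref{jugcoreone}). Your extra care with the $|V(T_s)|=2$ base case via Definition \ref{defcoorre}(i) is a sensible addition but does not change the argument.
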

	
	We note that, in Corollary \ref{jugcoretwo}, it is allowed that $v_1=v_2$.
	
		Let $T$ be an $n$-tournament, $u\notin V(T)=\{v_1,\ldots,v_n\}$, $\sigma=(r_1,\ldots,r_n)$ be a dominating relation between $u$ and $V(T)$, where $r_i=1$ if $u \rightarrow v_i$ and $r_i=-1$ otherwise. 	Then there is an $(n+1)$-tournament generated by $T$ and $u$ with $\sigma$, which we denote by $T(u, \sigma)$.

	\begin{definition}\label{defcrvertex}
		Let $T$ be a tournament, $u\notin V(T)$, $T(u,\sigma)$ be an $(n+1)$-tournament generated by $T$ and $u$ with a dominating relation $\sigma$. We call $u$ a CR vertex for $T$ with $\sigma$ if there exists a vertex $v\in V(T)$ such that $u$ and $v$ are CR-associated vertices in $T(u,\sigma)$, and call $\sigma$ a CR dominating relation between $u$ and $V(T)$; we call $u$ a non-CR vertex for $T$ with $\sigma$ if $u$ is not a CR vertex for $T$ in $T(u,\sigma)$, and call $\sigma$ a non-CR dominating relation between $u$ and $V(T)$.
	\end{definition}
	
	The following lemma shows a key property of CR vertices. The proof will be given in the next section.
	
	\begin{lemma}\label{lemcrdet}
		Let $T\in \mathcal{D}_{k} \backslash \mathcal{D}_{k-2}$, $u \notin V(T)$,  $\sigma$ be a dominating relation between $u$ and $V(T)$, and $u$ be a CR vertex for $T$ with $\sigma$. Then $T(u,\sigma)\in \mathcal{D}_{k}\backslash \mathcal{D}_{k-2}$.
	\end{lemma}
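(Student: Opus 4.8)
\medskip
\noindent\textbf{Proof proposal.} The plan is to establish the two memberships separately. The non-membership $T(u,\sigma)\notin\mathcal D_{k-2}$ is immediate: $T$ occurs as a subtournament of $T(u,\sigma)$, so any subtournament of $T$ witnessing $T\notin\mathcal D_{k-2}$ also witnesses $T(u,\sigma)\notin\mathcal D_{k-2}$. Hence the whole content of the lemma is to show $T(u,\sigma)\in\mathcal D_k$, that is, $\det\!\big(T(u,\sigma)[W]\big)\le k^2$ for every $W\subseteq V(T(u,\sigma))$.

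Fix a vertex $v\in V(T)$ for which $u$ and $v$ are CR-associated in $T(u,\sigma)$, and let $W\subseteq V(T(u,\sigma))$. I would split into three cases according to $W\cap\{u,v\}$. If $u\notin W$ then $T(u,\sigma)[W]=T[W]$ and the bound holds because $T\in\mathcal D_k$. If $u\in W$ but $v\notin W$, put $W'=(W\setminus\{u\})\cup\{v\}\subseteq V(T)$; since $u$ and $v$ are covertices (resp.\ revertices) in $T(u,\sigma)$, the relabelling $u\mapsto v$ is an isomorphism $T(u,\sigma)[W]\to T[W']$ (resp.\ becomes one after switching $T(u,\sigma)[W]$ at the single vertex $u$), so $\det(T(u,\sigma)[W])=\det(T[W'])\le k^2$, using that the determinant is invariant under isomorphism and under switching and that $T\in\mathcal D_k$.

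The remaining case $\{u,v\}\subseteq W$ is where the real work lies, and rests on a determinant identity for the configuration created by the CR pair. When $u$ and $v$ are covertices in $T(u,\sigma)[W]$ and $|W|\ge 3$, the tournament $T(u,\sigma)[W]$ is exactly the $1$-transitive blowup of $G:=T[W\setminus\{u\}]$ obtained by splitting the vertex $v$ into the transitive $2$-tournament on $\{u,v\}$. The plan is to prove directly that $\det\!\big(T(u,\sigma)[W]\big)=\det(G-v)=\det\!\big(T[W\setminus\{u,v\}]\big)$: ordering $W$ as $u,v,w_1,\dots,w_m$, the rows of $S_{T(u,\sigma)[W]}$ indexed by $u$ and $v$ agree over the coordinates $w_1,\dots,w_m$, so after subtracting row $v$ from row $u$ and then clearing the first two columns by elementary row and column operations (all determinant-preserving) the matrix becomes block-triangular with diagonal $(\epsilon,\epsilon,S_{T[W\setminus\{u,v\}]})$, where $\epsilon\in\{\pm1\}$ records the orientation of the arc between $u$ and $v$; hence $\det(T(u,\sigma)[W])=\epsilon^2\det(S_{T[W\setminus\{u,v\}]})=\det(T[W\setminus\{u,v\}])\le k^2$. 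When $u$ and $v$ are revertices in $T(u,\sigma)[W]$, first switch $T(u,\sigma)[W]$ at $\{u\}$: this preserves the determinant, converts $u$ and $v$ into covertices, and leaves the induced subtournament on $W\setminus\{u,v\}$ unchanged, so the covertex identity applies verbatim. The degenerate subcase $|W|\le 2$ is trivial, since a skew-symmetric matrix of order at most $2$ has determinant $0$ or $1\le k^2$. Combining the three cases gives $T(u,\sigma)\in\mathcal D_k$, and together with the first paragraph, $T(u,\sigma)\in\mathcal D_k\setminus\mathcal D_{k-2}$.

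I expect the only genuinely computational step to be the block reduction establishing $\det(T(u,\sigma)[W])=\det(T[W\setminus\{u,v\}])$ in the covertex case; the rest is bookkeeping about restriction, isomorphism, and switching invariance. If this blowup determinant identity has already been recorded elsewhere (for instance as a special case of a formula for determinants of transitive blowups), one may simply invoke it and omit the computation.
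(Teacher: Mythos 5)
Your proof is correct, but it takes a genuinely different route from the paper's. The paper disposes of the lemma in three lines: a CR vertex means $T(u,\sigma)$ is switching equivalent to a $1$-transitive blowup of $T$ (Lemma \ref{lemCRtotransblowup}), transitive blowups preserve $\mathcal{D}_k\backslash\mathcal{D}_{k-2}$ (Corollary \ref{crllblowupclass}, which rests on the externally cited Lemma \ref{blowupclass}), and switching preserves $\mathcal{D}_k\backslash\mathcal{D}_{k-2}$ (Corollary \ref{switchDk}). You instead verify the bound $\det(T(u,\sigma)[W])\le k^2$ subtournament by subtournament, and the heart of your argument --- the row/column reduction showing $\det(T(u,\sigma)[W])=\det(T[W\backslash\{u,v\}])$ when $u,v$ are covertices in $W$ --- is in effect a self-contained proof of the hard direction of the cited blowup lemma in the special case of a $1$-transitive blowup. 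Both the reduction (subtracting row $v$ from row $u$, then column $v$ minus column $u$, yielding a block-triangular matrix with determinant $\epsilon^2\det(S_{T[W\backslash\{u,v\}]})$) and the bookkeeping in your other two cases (including the use of the fact that the covertex/revertex relation restricts to subtournaments, implicitly Corollary \ref{subcore}) are sound. What the paper's route buys is brevity and reuse of machinery already needed elsewhere; what yours buys is independence from the external reference \cite{DTHREE,Dfive} and an explicit determinant identity that makes transparent \emph{why} adding a CR vertex cannot create a larger principal minor.
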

	
	Clearly, $u$ is a non-CR vertex for $T$ with $\sigma$ if there exist no vertex $v\in V(T)$ such that $u$ and $v$ are CR-associated vertices in $T(u,\sigma)$. Among all the $2^n$ possible dominating relations between $u$ and the vertices in $V(T)$, there must exist a dominating relation $\sigma$ such that $u$ is a CR vertex for $T$ with $\sigma$. However, it is possible that there does not exist a dominating relation $\sigma$ between $u$ and the vertices in $V(T)$ such that $u$ is a non-CR vertex for $T$ with $\sigma$. In fact, we can show the following result immediately.
\begin{proposition}\label{nonCRvertex}
	Let $T$ be an $n$-tournament, $u\notin V(T)$. 
	\item{\rm(i)} If $T$ is a $1$-tournament, a $2$-tournament or a diamond, then for any dominating relation $\sigma$ between $u$ and $V(T)$, $u$ is a CR vertex for $T$ with $\sigma$.
	\item{\rm(ii)} If $n \geq 3$ and $T$ is not a diamond, then there exists a dominating relation $\sigma$ between $u$ and $V(T)$ such that $u$ is a non-CR vertex for $T$ with $\sigma$.
\end{proposition}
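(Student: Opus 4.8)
The plan is to reduce both parts to a counting statement about sign vectors. Write $V(T)=\{v_1,\dots,v_n\}$, let $s_i\in\{-1,0,1\}^n$ be the $i$-th row of $S_T$ (so $(s_i)_i=0$ and $(s_i)_j=\theta_T(v_i,v_j)$ for $j\neq i$), and identify a dominating relation $\sigma=(r_1,\dots,r_n)$ with the vector $r\in\{\pm1\}^n$. Unwinding Definition \ref{defcoorre}, for each $i$ one finds that $u$ and $v_i$ are covertices in $T(u,\sigma)$ iff $r$ agrees with $s_i$ off the $i$-th coordinate, i.e.\ $r\in\{s_i+e_i,\,s_i-e_i\}$ (with $e_i$ the $i$-th standard basis vector), and revertices iff $r\in\{-s_i+e_i,\,-s_i-e_i\}$. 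Hence $u$ is a CR vertex for $T$ with $\sigma$ iff $r$ lies in the ``bad set'' $B(T):=\bigcup_{i=1}^n\{\pm s_i\pm e_i\}\subseteq\{\pm1\}^n$. For each fixed $i$ the four vectors $\pm s_i\pm e_i$ are distinct (because $s_i$ has no zero entry off the diagonal), so $|B(T)|\le 4n$; and I would verify that $\{\pm s_i\pm e_i\}$ and $\{\pm s_j\pm e_j\}$ intersect (for $i\neq j$) if and only if $v_i$ and $v_j$ are CR-associated in $T$ — the ``only if'' by comparing coordinates and using the skew-symmetry of $S_T$ on coordinates $i,j$, the ``if'' by writing down the common vector explicitly (e.g.\ $s_i+(s_j)_ie_i=s_j+(s_i)_je_j$ when $v_i,v_j$ are covertices).

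With this in hand, part (i) goes as follows. If $T$ is a $1$-tournament, $T(u,\sigma)$ is a $2$-tournament and $u$ is a covertex of the unique vertex of $T$ by Definition \ref{defcoorre}(i); if $T$ is a $2$-tournament, $T(u,\sigma)$ is a $3$-tournament, in which any two vertices relate to the third vertex either alike or oppositely, hence are covertices or revertices; in both cases $u$ is a CR vertex. If $T$ is a diamond, then $n=4$, so $|\{\pm1\}^4|=16=4n$; since a diamond has no CR-associated pair of vertices (a direct check from its structure as a vertex dominating or dominated by a $3$-cycle), the sets $\{\pm s_i\pm e_i\}$ for $i=1,\dots,4$ are pairwise disjoint, hence $|B(T)|=16$, so $B(T)=\{\pm1\}^4$ and every $\sigma$ makes $u$ a CR vertex.

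For part (ii) I would split on $n$. If $n\ge5$ then $|B(T)|\le 4n<2^n=|\{\pm1\}^n|$, so some $r\in\{\pm1\}^n$ lies outside $B(T)$, giving a $\sigma$ for which $u$ is a non-CR vertex. If $n=4$, then $T$ is not a diamond, so it contains no diamond and hence lies in $\mathcal{D}_1$ by Theorem \ref{D1}, so it is switching equivalent to a transitive tournament; a transitive tournament has the covertices $v_1,v_2$, and ``having a CR-associated pair'' is easily seen to be switching invariant, so $T$ has a CR-associated pair. This makes two of the sets $\{\pm s_i\pm e_i\}$ intersect, whence $|B(T)|\le15<16$ and a suitable $r$ exists. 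If $n=3$ there are only two $3$-tournaments: for the $3$-cycle, taking $\sigma$ with $u\rightarrow v_i$ for all $i$ turns $T(u,\sigma)$ into a diamond, and for the transitive triple $v_1\rightarrow v_2\rightarrow v_3$, taking $\sigma$ with $v_1\rightarrow u$, $u\rightarrow v_2$, $v_3\rightarrow u$ turns $T(u,\sigma)$ into a diamond with apex $v_1$; since a diamond has no CR-associated pair, $u$ is a non-CR vertex in either case.

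The main obstacle is the diamond case of (i): getting the two-way correspondence between intersections of the sets $\{\pm s_i\pm e_i\}$ and CR-associated pairs of $T$ exactly right (this is where skew-symmetry is used), together with the input fact that a diamond itself has no CR-associated pair of vertices. Everything else is a union-bound count ($n\ge5$), one application of Theorem \ref{D1} plus switching invariance of ``having a CR pair'' ($n=4$), and inspection of the two $3$-tournaments and of the diamond.
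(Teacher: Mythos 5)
Your proposal is correct, and for the main case it coincides with the paper's argument: for $n\geq 5$ the paper also observes that each vertex $v_i$ contributes at most $4$ dominating relations making $u$ CR-associated with $v_i$, so at most $4n<2^n$ relations are ``bad''. Where you genuinely diverge is in the small cases, which the paper disposes of with ``by direct checking''. You sharpen the union bound into an exact criterion --- the sets $\{\pm s_i\pm e_i\}$ and $\{\pm s_j\pm e_j\}$ intersect if and only if $v_i$ and $v_j$ are CR-associated in $T$ (your verification of this is sound: coordinates outside $\{i,j\}$ force $(s_i)_k(s_j)_k$ to be constant, while coordinates $i,j$ merely determine the free signs) --- and then exploit the coincidence $4n=2^n$ at $n=4$. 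This yields the diamond case of (i) from the single input fact that a diamond has no CR-associated pair (a check of six vertex pairs rather than sixteen dominating relations), and the non-diamond $n=4$ case of (ii) from Theorem \ref{D1} together with switching invariance of ``having a CR-associated pair'' (which you must verify directly here, since the paper's Theorem \ref{switchnochgcore} appears later, but its proof is independent of this proposition, so there is no circularity). Your $n=3$ constructions are exactly the paper's (Proposition \ref{ex3cycle} uses the same two diamonds), just phrased via the fact that a diamond has no CR-associated pair rather than via its determinant. The net effect is a more uniform and more explanatory proof: it shows that a $4$-tournament forces every added vertex to be a CR vertex precisely because it is the unique order where $4n=2^n$ and it admits no CR-associated pair, which is exactly the diamond.
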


\begin{proof}
	 By direct checking, {\rm (i)} holds. Now we prove {\rm (ii)} holds by the following two cases.
	 
	 \textbf{Case 1}: $3 \leq n \leq 4$.
	 
	  In this case, by direct checking, if $T$ is not a diamond, then there exists a dominating relation $\sigma$ between $u$ and $V(T)$ such that $u$ is a non-CR vertex for $T$ with $\sigma$.
	
	\textbf{Case 2}: $n \geq 5$.
	
		Since $|V(T)|=n$, there are $2^n$ different dominating relations between $u$ and $V(T)$. For a vertex $v \in V(T)$, there are $4$ different dominating relations such that $u$ and $v$ are CR-associated vertices in $T(u,\sigma)$, and thus there are at most $4n$ different dominating relations such that $u$ is a CR vertex for $T$ with $\sigma$. When $n> 4$, we have $2^n-4n>0$. It follows that there exists a dominating relation $\sigma$ such that there exist no vertex $v\in V(T)$ satisfying the condition that $u$ and $v$ are CR-associated vertices in $T(u,\sigma)$, that is, $u$ is a non-CR vertex for $T$ with $\sigma$.
\end{proof}

	 Based on Proposition \ref{nonCRvertex}, now we give the definition of CR tournaments.
	
	\begin{definition}\label{defCR}
		Let $T\in \mathcal{D}_k\backslash \mathcal{D}_{k-2}$, $u \notin V(T)$.
		
		\item{\rm(i)} If $T$ is a $1$-tournament, a $2$-tournament or a diamond, we call $T$ a CR tournament, and call $T$ a trivial CR tournament.
		
		\item{\rm(ii)} If $T$ is not a trivial CR tournament, and for any dominating relation $\sigma$ between $u$ and $V(T)$ such that $u$ is a non-CR vertex for $T$ with $\sigma$, $T(u,\sigma)\notin \mathcal{D}_{k}$ holds, we call $T$ a CR tournament. 
	\end{definition}
	
	By Definition \ref{defCR}, if $T\in \mathcal{D}_{k}\backslash \mathcal{D}_{k-2}$ is a CR tournament and $T(u,\sigma)\in \mathcal{D}_{k}\backslash \mathcal{D}_{k-2}$, then $u$ must be a CR vertex for $T$ with $\sigma$. Moreover, if $u$ is a CR vertex for $T$ with $\sigma$, then $T(u,\sigma) \in \mathcal{D}_{k}\backslash \mathcal{D}_{k-2}$ by Lemma \ref{lemcrdet}. Therefore, an equivalent statement of Definition \ref{defCR} is as follows. 
	
	\begin{definition}\label{defCR2}
		Let $T\in \mathcal{D}_{k} \backslash \mathcal{D}_{k-2}$, $u \notin V(T)$, $\sigma$ be a dominating relation between $u$ and $V(T)$. If $T$ satisfies the condition that $T(u,\sigma)\in \mathcal{D}_{k} \backslash \mathcal{D}_{k-2}$ if and only if $u$ is a CR vertex for $T$ with $\sigma$, then $T$ is a CR tournament.
	\end{definition}
	
	In fact, if $T\in \mathcal{D}_k\backslash \mathcal{D}_{k-2}$ is a CR tournament and $u$ is a non-CR vertex for $T$ with $\sigma$, then $T(u,\sigma)$ contains at least a subtournament $T_{sub}(u,\sigma)$ such that $\det(T_{sub}(u,\sigma))>k^2$ (and $u$ must be contained in $V(T_{sub}(u,\sigma))$), which implies it will generate some new subtournaments with larger determinant by adding a non-CR vertex to $T$.
	
	Now, as an example, we show any $3$-tournament is a CR tournament.
	
	\begin{proposition}\label{ex3cycle}
		A $3$-tournament is a CR tournament.
	\end{proposition}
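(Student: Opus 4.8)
The plan is to verify Definition \ref{defCR2} directly. First I would observe that every $3$-tournament $T$ lies in $\mathcal{D}_1\backslash\mathcal{D}_{-1}=\mathcal{D}_1$: its skew-adjacency matrix has odd order, so $\det(T)=0$, while every proper subtournament has at most two vertices and hence determinant at most $1$; thus here $k=1$. Since $T$ has three vertices and is not a diamond, it is not a trivial CR tournament, so by Definition \ref{defCR2} it suffices to show that for $u\notin V(T)$ and every dominating relation $\sigma$ between $u$ and $V(T)$, one has $T(u,\sigma)\in\mathcal{D}_1$ if and only if $u$ is a CR vertex for $T$ with $\sigma$.

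One direction is immediate from Lemma \ref{lemcrdet}: if $u$ is a CR vertex for $T$ with $\sigma$, then $T(u,\sigma)\in\mathcal{D}_1\backslash\mathcal{D}_{-1}=\mathcal{D}_1$. For the converse I would argue contrapositively: assuming $u$ is a non-CR vertex for $T$ with $\sigma$ --- that is, $u$ is not CR-associated with any of $v_1,v_2,v_3$ in the $4$-tournament $T(u,\sigma)$ --- I must show $T(u,\sigma)\notin\mathcal{D}_1$. Here I would use Theorem \ref{D1}: $T(u,\sigma)\in\mathcal{D}_1$ iff it contains no diamond, and since a diamond has four vertices, a $4$-tournament contains a diamond iff it is itself a diamond; hence it is enough to prove that $T(u,\sigma)$ is a diamond.

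Thus everything reduces to the following purely structural fact about $4$-tournaments: if a $4$-tournament $Q$ has a vertex $u$ that is not CR-associated (neither a covertex nor a revertex) with any of the other three vertices, then $Q$ is a diamond. I would prove this by looking at the out-degree $d$ of $u$ in $Q$ restricted to the other three vertices; passing to the reverse tournament (which preserves both ``being a diamond'' and ``having a vertex CR-associated with none of the others'', since reversing all arcs turns covertices into covertices and revertices into revertices) lets me assume $d\in\{0,1\}$. If $d=0$, the other three vertices form either a $3$-cycle, in which case $Q$ is a diamond, or a transitive triple $x\rightarrow y\rightarrow z$, in which case $u$ and $z$ are covertices, contradicting the hypothesis. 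If $d=1$, a short case analysis on the three arcs among the remaining vertices shows that ``$u$ is CR-associated with none of them'' forces those three arcs into exactly one of two configurations, and in each of them $u$ together with the other three vertices forms a $3$-cycle with a dominating apex, i.e. $Q$ is a diamond.

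Combining the two directions yields the equivalence in Definition \ref{defCR2}, so $T$ is a CR tournament. The only slightly delicate point is the $d=1$ subcase of the structural fact, where one has to keep careful track of the covertex/revertex conditions for each of the three candidate pairs $\{u,v_i\}$; everything else is a routine finite check.
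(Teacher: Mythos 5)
Your proof is correct. The verification that $T\in\mathcal{D}_1$, the use of Lemma \ref{lemcrdet} for one direction, and the reduction of the other direction to a finite structural check are all sound; I checked your $d=1$ subcase explicitly (with $u\rightarrow x$ and $y,z\rightarrow u$, the three non-CR-associated conditions force $\theta(x,y)\theta(x,z)=-1$, $\theta(y,x)\theta(y,z)=1$, $\theta(z,x)\theta(z,y)=1$, which has exactly the two solutions you describe, and each yields a $3$-cycle dominated by the fourth vertex). The route differs from the paper's in its organization: the paper splits on the isomorphism type of the $3$-tournament $T$ (transitive triple versus $3$-cycle), explicitly lists the two non-CR dominating relations in each case, and verifies $\det(T(u,\sigma_i))=9$ directly; you instead argue contrapositively via Theorem \ref{D1}, proving the slightly more general structural fact that any $4$-tournament possessing a vertex CR-associated with none of the other three must be a diamond, with the case split driven by the out-degree of the added vertex and a reversal symmetry. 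Both are finite checks of comparable length; your version buys a statement that is independent of which $3$-tournament one starts from and makes the role of the diamond characterization of $\mathcal{D}_1$ explicit, while the paper's version is more self-contained in that it exhibits the offending subtournaments and their determinants concretely.
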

	\begin{proof}
		Let $T$ be a $3$-tournament with $V(T)=\{v_1,v_2,v_3\}$, $u\notin V(T)$, $T(u,\sigma)$ be the tournament generated by $T$ and $u$ with a dominating relation $\sigma=(r_1,r_2,r_3)$, where $r_i=1$ if $u\rightarrow v_i$ and $r_i=-1$ otherwise. 
		
		It is clear that $T\in \mathcal{D}_1\backslash \mathcal{D}_{-1}$, and $T$ is a $3$-cycle if $T$ is not a $3$-transitive tournament. If $T$ is a $3$-transitive tournament (resp. a $3$-cycle), then it is easy to check that $u$ is a non-CR vertex for $T$ with $\sigma$ if and only if $\sigma \in \{(1,-1,1), (-1,1,-1)\}$ (resp. $\sigma\in \{(1,1,1), (-1,-1,-1)\}$). For these two dominating relations, we have $\det(T(u,\sigma))=9$ and $T(u,\sigma)\notin \mathcal{D}_1$, thus $T$ is a CR tournament.
	\end{proof}
	
	If a tournament $T$ have two vertices $u_1$ and $u_2$ are CR-associated vertices in $T$, then there exists a tournament $H$ (for example, $H=T[V(T) \backslash \{u_1\}]$) such that $T$ is switching equivalent to a $1$-transitive blowup of $H$. Hence a tournament in which there does not exist two vertices $u_1$ and $u_2$ such that $u_1$ and $u_2$ are CR-associated vertices can be considered as a tournament having ``basic structure''. Now we give the definition of basic tournaments.
	
	\begin{definition}\label{defbasic}
		A tournament $T$ of order $n\geq 4$ is a basic tournament if there exist no two vertices $\{u_1,u_2\}\subset V(T)$ such that $u_1$ and $u_2$ are CR-associated vertices in $T$.
	\end{definition}
	
	Now we define \textit{strong CR tournaments}, which is a core concept of this paper.
	
	\begin{definition}\label{defstrongCR}
		A CR tournament $T$ is a strong CR tournament if every $1$-transitive blowup of $T$ is also a CR tournament.
	\end{definition}
	
	If a basic tournament $T$ is a CR tournament, we say $T$ is a basic CR tournament. If a basic tournament $T$ is a strong CR tournament, we say $T$ is a basic strong CR tournament.
	
	For small odd values of $k$, it is computationally feasible to directly verify whether $L_{k+1}$ is a basic strong CR tournament. For example, by direct checking, we have the following. 
	
		\begin{proposition}\label{L246}
		$L_n$ is a strong CR tournament for $n\in \{2,4,6\}$. In particular, $L_n$ is a basic strong CR tournament for $n\in \{4,6\}$.
	\end{proposition}
	
	However, determining whether $L_{k+1}$ is a basic strong CR tournament for $k \geq 7$ is challenging. This question will be completely solved in Section \ref{sec-allLn}.

\section{Some properties under switching operation}\label{sec-invariant}

\hspace{1.5em}In this section, we show some properties under switching operation. In particular, we show that the relationship ``CR-associated'' between two vertices and the properties of being ``a CR tournament'', ``a basic tournament'' and ``a strong CR tournament'' are invariants under switching operation. To avoid any potential confusion for the reader, we clarify that in this paper, whenever $\mathcal{D}_{k}$ or $\mathcal{D}_{k} \backslash \mathcal{D}_{k-2}$ appears, the subscript $k$ denotes a positive odd integer.

	\begin{lemma}{\rm(\!\!\cite{DTHREE})}\label{blowupclass}
	Let $T$ be an $n$-tournament, $T(a_1,\ldots,a_n)$ be a transitive blowup of $T$. Then $T\in \mathcal{D}_k$ if and only if $T(a_1,\ldots,a_n)\in \mathcal{D}_k$. 
\end{lemma}

By Lemma \ref{blowupclass}, we have the following result immediately.

\begin{corollary}\label{crllblowupclass}
	Let $T$ be an $n$-tournament, $T(a_1,\ldots,a_n)$ be a transitive blowup of $T$. Then $T\in \mathcal{D}_k\backslash \mathcal{D}_{k-2}$ if and only if $T(a_1,\ldots,a_n)\in \mathcal{D}_k\backslash \mathcal{D}_{k-2}$.  
\end{corollary}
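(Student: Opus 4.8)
The plan is to derive this corollary directly from Lemma \ref{blowupclass}(ii), which already supplies the two-sided implication $T \in \mathcal{D}_k \iff T(a_1,\ldots,a_n) \in \mathcal{D}_k$ for every positive odd integer $k$. Unwinding the definition of the set $\mathcal{D}_k \backslash \mathcal{D}_{k-2}$ — namely, $T \in \mathcal{D}_k$ together with $T \notin \mathcal{D}_{k-2}$ — the statement reduces to applying Lemma \ref{blowupclass}(ii) at two consecutive odd levels, $k$ and $k-2$, and combining the resulting equivalences.

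First I would record that, by Lemma \ref{blowupclass}(ii) applied with the odd integer $k$, membership in $\mathcal{D}_k$ is preserved in both directions under transitive blowup: $T \in \mathcal{D}_k$ if and only if $T(a_1,\ldots,a_n) \in \mathcal{D}_k$. Next, for $k \geq 3$, I would apply the same lemma with the odd integer $k-2$ to get $T \in \mathcal{D}_{k-2}$ if and only if $T(a_1,\ldots,a_n) \in \mathcal{D}_{k-2}$, and hence, by contraposition, $T \notin \mathcal{D}_{k-2}$ if and only if $T(a_1,\ldots,a_n) \notin \mathcal{D}_{k-2}$. Conjoining these two equivalences yields precisely $T \in \mathcal{D}_k \backslash \mathcal{D}_{k-2}$ if and only if $T(a_1,\ldots,a_n) \in \mathcal{D}_k \backslash \mathcal{D}_{k-2}$.

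For the boundary case $k = 1$, I would invoke the paper's stated convention that $\mathcal{D}_1 \backslash \mathcal{D}_{-1}$ denotes the set $\mathcal{D}_1$, so in this case the claim is nothing more than Lemma \ref{blowupclass}(ii) with $k = 1$, and no additional argument is required.

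Since each step is an immediate appeal to an already-established equivalence, there is no genuine obstacle in this proof; the only points deserving a word of care are the degenerate level $k = 1$, which the convention $\mathcal{D}_1\backslash\mathcal{D}_{-1} = \mathcal{D}_1$ disposes of, and quoting Lemma \ref{blowupclass}(ii) at the correct odd level $k-2$ (and not, say, $k-1$, which lies outside the indexing of the family $\mathcal{D}_\bullet$).
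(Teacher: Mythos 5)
Your proposal is correct and matches the paper's own argument: both proofs dispose of $k=1$ by the stated convention and then apply Lemma \ref{blowupclass}(ii) at the two levels $k$ and $k-2$, the only cosmetic difference being that the paper phrases the $\mathcal{D}_{k-2}$ step as a proof by contradiction while you state the contrapositive directly. No gaps.
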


\begin{lemma}{\rm(\!\!\cite{TWOGRAPH})}\label{Minors}
	Let tournaments $T_1$ and $T_2$ with the same vertex set $V$ be switching equivalent.  Then $T_1[U]$ is switching equivalent to $T_2[U]$, and $\det(T_1[U])=\det(T_2[U])$ for any non-empty subset $U\subseteq V$.
\end{lemma}

	\begin{corollary}\label{switchDk}
	Let tournaments $T_1$ and $T_2$ be switching isomorphic. Then $T_1\in \mathcal{D}_k\backslash \mathcal{D}_{k-2}$ if and only if $T_2\in \mathcal{D}_k\backslash\mathcal{D}_{k-2}$. 
\end{corollary}

\begin{proof}
	We only need to show the case where $T_1$ and $T_1$ are switching equivalent. Let $i\in \{1,2\}$, $j \in \{1,2\}\backslash \{i\}$. If $T_i\in \mathcal{D}_k\backslash \mathcal{D}_{k-2}$, then $\det(T_i[U])\leq k^2$ for any non-empty subset $U\subseteq V$ and there exists some subset $X\subseteq V$ such that $\det(T_i[X])=k^2$. Therefore, by Lemma \ref{Minors}, $\det(T_j[U])=\det(T_i[U]) \leq k^2$ for any non-empty subset $U\subseteq\ V$ and $\det(T_j[X])=\det(T_i[X])=k^2$, which implies $T_j\in \mathcal{D}_k\backslash \mathcal{D}_{k-2}$. 
	
	Combining the cases of $i=1$ and $i=2$, we complete the proof.
\end{proof}

Now we prove Lemma \ref{lemcrdet}.

\begin{proof}[{\bf Proof of Lemma \ref{lemcrdet}:}]
	Since $u$ is a CR vertex for $T$ with $\sigma$, there exists $v\in V(T)$ such that $u$ and $v$ are CR-associated vertices in $T(u,\sigma)$. Let $W=\emptyset$ if $u$ and $v$ are covertices in $T(u,\sigma)$, $W=\{u\}$ if $u$ and $v$ are revertices in $T(u,\sigma)$, and $T^{*}(u,\sigma)$ be a switch of $T(u,\sigma)$ with respect to $W$. Then $T^{*}(u,\sigma)$ is a $1$-transitive blowup of $T$. By Corollaries \ref{crllblowupclass} and  \ref{switchDk}, we have $T^{*}(u,\sigma)\in \mathcal{D}_{k}\backslash \mathcal{D}_{k-2}$ and $T(u,\sigma)\in \mathcal{D}_{k}\backslash \mathcal{D}_{k-2}$. This completes the proof.
\end{proof}

	\begin{lemma}\label{lemtranblowswitcheq}
	Let tournaments $T_1$ and $T_2$ with the same vertex set $V=\{v_1,v_2,\ldots,v_n\}$ be switching equivalent, $T_1(a_1,a_2,\ldots,a_n)$ and $T_2(a_1,a_2,\ldots,a_n)$ be a transitive blowup of $T_1$ and $T_2$ with respect to the same tournaments $H_1,\ldots,H_n$, respectively. Then $T_1(a_1,a_2,\ldots,a_n)$ is switching equivalent to $T_2(a_1,a_2,\ldots,a_n)$.
\end{lemma}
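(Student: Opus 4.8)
The plan is to produce an explicit vertex subset of $T_1(a_1,a_2,\ldots,a_n)$ with respect to which the switch is exactly $T_2(a_1,a_2,\ldots,a_n)$. Since $T_1$ and $T_2$ are switching equivalent on $V=\{v_1,\ldots,v_n\}$, I first fix a subset $W\subseteq V$ for which $T_2$ is the switch of $T_1$ with respect to $W$; equivalently, $\theta_{T_2}(v_i,v_j)=\theta_{T_1}(v_i,v_j)$ when $v_i,v_j$ lie on the same side of the partition $\{W,\,V\setminus W\}$, and $\theta_{T_2}(v_i,v_j)=-\theta_{T_1}(v_i,v_j)$ when they lie on opposite sides.

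Next, set $\widetilde{W}=\bigcup_{v_i\in W}V(H_i)$, the union of the blown-up copies indexed by the vertices of $W$, viewed inside the common vertex set of $T_1(a_1,\ldots,a_n)$ and $T_2(a_1,\ldots,a_n)$. Let $T^{*}$ be the switch of $T_1(a_1,\ldots,a_n)$ with respect to $\widetilde{W}$; I claim $T^{*}=T_2(a_1,\ldots,a_n)$. To check this I compare arcs of three types. (a) Arcs inside a single copy $V(H_i)$: both endpoints lie in $\widetilde{W}$ or both lie outside it, so these arcs are untouched by the switch, and in each of $T_1(a_1,\ldots,a_n)$, $T_2(a_1,\ldots,a_n)$, $T^{*}$ they coincide with $H_i$. (b) Arcs between $V(H_i)$ and $V(H_j)$ with $v_i,v_j$ on the same side of $\{W,V\setminus W\}$: then $V(H_i)$ and $V(H_j)$ are both contained in $\widetilde{W}$ or both disjoint from it, so the switch reverses none of these arcs, and since $\theta_{T_1}(v_i,v_j)=\theta_{T_2}(v_i,v_j)$ they agree with $T_2(a_1,\ldots,a_n)$. (c) Arcs between $V(H_i)$ and $V(H_j)$ with $v_i,v_j$ on opposite sides: then exactly one of $V(H_i),V(H_j)$ is inside $\widetilde{W}$, so the switch reverses every arc between the two copies, replacing $\theta_{T_1}(v_i,v_j)$ by $-\theta_{T_1}(v_i,v_j)=\theta_{T_2}(v_i,v_j)$, which again matches $T_2(a_1,\ldots,a_n)$. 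Hence $T^{*}=T_2(a_1,\ldots,a_n)$, so $T_1(a_1,\ldots,a_n)$ and $T_2(a_1,\ldots,a_n)$ are switching equivalent.

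There is essentially no serious obstacle here; the only point needing care is the bookkeeping in case (c), i.e. that switching a blowup with respect to a \emph{union of whole blocks} reverses precisely the inter-block arcs joining a selected block to an unselected one, which is exactly the effect of a single switch at the level of $T_1$. One can phrase the same argument in matrix language as an alternative: writing $S_{T_2}=Q\,S_{T_1}\,Q$ for a $\{\pm1\}$-diagonal matrix $Q=\mathrm{diag}(\epsilon_1,\ldots,\epsilon_n)$, and noting that (after ordering vertices block by block) the skew-adjacency matrix of a blowup has diagonal blocks $S_{H_i}$ and off-diagonal $(i,j)$-blocks $\theta_{T}(v_i,v_j)\,J_{a_i\times a_j}$, conjugation by $\widetilde{Q}=\mathrm{diag}(\epsilon_1 I_{a_1},\ldots,\epsilon_n I_{a_n})$ sends $S_{T_1(a_1,\ldots,a_n)}$ to $S_{T_2(a_1,\ldots,a_n)}$, which gives the conclusion via the $\{\pm1\}$-diagonal similarity characterization of switching equivalence.
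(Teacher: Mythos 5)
Your proposal is correct and follows essentially the same route as the paper: both take the switching set $W$ for $T_1,T_2$ and pass to the union of the corresponding blown-up blocks, with your write-up simply supplying the arc-by-arc verification that the paper leaves implicit.
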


\begin{proof}
	Suppose that $T_1$ is switching equivalent to $T_2$ with respect to $W$.
	
	If $W=\emptyset$, then $T_1=T_2$, and thus $T_1(a_1,a_2,\ldots,a_n)=T_2(a_1,a_2,\ldots,a_n)$.
	
	If $W=\{v_{i_1},v_{i_2},\ldots,v_{i_s}\}\neq \emptyset$, we take $W'=V(H_{i_1}) \cup V(H_{i_2}) \cup \cdots \cup V(H_{i_s})$. Then $T_1(a_1,a_2,\ldots,a_n)$ is switching equivalent to $T_2(a_1,a_2,\ldots,a_n)$ with respect to $W'$.
	
	Combining the above arguments, we complete the proof.
\end{proof}

The following lemma is crucial for the  proof of the subsequent theorem.

\begin{lemma}\label{switchnochgcore}
	Let $T$ be a tournament, $T'$ be a switch of $T$, $u_1$ and $u_2$ be CR-associated vertices in $T$. Then $u_1$ and $u_2$ are CR-associated vertices in $T'$.
\end{lemma}

\begin{proof}
	If $|V(T)|=|V(T')|=2$, then the result holds by Definition \ref{defcoorre}. Now we consider $|V(T)|\geq 3$.
	
	Suppose that $T'$ is a switch of $T$ with respect to the subset $W\subseteq V(T)$. For any $v\in V(T) \backslash \{u_1,u_2\}$ and $i \in \{1,2\}$, we have
	\begin{align}
		\theta_{T'}(u_i,v)=\begin{cases}
			\theta_{T}(u_i,v), & \text{if $\{u_i,v\}\subseteq W$ or $\{u_i,v\}\subseteq V(T)\backslash W$};\\
			-\theta_{T}(u_i,v), & \text{if $|\{u_i,v\} \cap W|=1$}.
		\end{cases} \label{gs4-1-1}
	\end{align}
	
	Then we complete the proof by the following two cases.
	
	\textbf{Case 1}: $\{u_1,u_2\}\subseteq W$ or $\{u_1,u_2\}\subseteq V(T)\backslash W$.
	
		In this case, by \eqref{gs4-1-1}, we have $\theta_{T'}(u_1,v)\cdot \theta_{T'}(u_2,v)=\theta_{T}(u_1,v)\cdot \theta_{T}(u_2,v)$ for any $v \in V(T)\backslash \{u_1,u_2\}$, and then by Proposition \ref{jugcoreone}, $u_1$ and $u_2$ are covertices (resp. revertices) in $T'$ if $u_1$ and $u_2$ are covertices (resp. revertices) in $T$.
	
	\textbf{Case 2}: $|\{u_1,u_2\} \cap W|=1$.
	
	In this case, by \eqref{gs4-1-1}, we have $\theta_{T'}(u_1,v)\cdot \theta_{T'}(u_2,v)=-\theta_{T}(u_1,v)\cdot \theta_{T}(u_2,v)$ for any $v \in V(T)\backslash \{u_1,u_2\}$, and then by Proposition \ref{jugcoreone}, $u_1$ and $u_2$ are revertices (resp. covertices) in $T'$ if $u_1$ and $u_2$ are covertices (resp. revertices) in $T$.
\end{proof}

\begin{corollary}\label{switchnonCR}
	Let $T$ be a tournament, $u$ be a non-CR vertex for $T$ with a dominating relation $\sigma$, $T(u,\sigma)$ be the tournament generated by $T$ and $u$ with $\sigma$, and $T'(u,\sigma)$ be a switch of $T(u,\sigma)$. Then $u$ is a non-CR vertex for $T'(u,\sigma)[V(T)]$.
\end{corollary}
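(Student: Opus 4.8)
The plan is to prove the contrapositive and invoke Theorem~\ref{switchnochgcore} together with the symmetry of the switching relation. First I would fix notation: write $H = T'(u,\sigma)[V(T)]$ and let $\sigma'$ denote the dominating relation between $u$ and $V(T)$ that is realized inside $T'(u,\sigma)$, so that by construction $H(u,\sigma') = T'(u,\sigma)$. With this setup, the assertion ``$u$ is a non-CR vertex for $H$'' means precisely (Definition~\ref{defcrvertex}) that there is no $v \in V(H) = V(T)$ for which $u$ and $v$ are CR-associated vertices in $H(u,\sigma') = T'(u,\sigma)$, so this is the statement I would aim to establish.

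Next I would suppose, towards a contradiction, that $u$ is a CR vertex for $H$ with $\sigma'$; then there exists $v \in V(T)$ such that $u$ and $v$ are CR-associated vertices in $T'(u,\sigma)$. Since $T'(u,\sigma)$ is a switch of $T(u,\sigma)$, say with respect to some $W \subseteq V(T(u,\sigma))$, the tournament $T(u,\sigma)$ is in turn a switch of $T'(u,\sigma)$ (with respect to the same $W$), because switching with respect to a fixed set reverses the same arcs and is therefore an involution. Applying Theorem~\ref{switchnochgcore} to the tournament $T'(u,\sigma)$, its switch $T(u,\sigma)$, and the CR-associated pair $u, v$, I conclude that $u$ and $v$ are CR-associated vertices in $T(u,\sigma)$ as well.

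Finally, since $T(u,\sigma)[V(T)] = T$ by the definition of $T(u,\sigma)$, the existence of such a $v$ means exactly that $u$ is a CR vertex for $T$ with $\sigma$, contradicting the hypothesis that $u$ is a non-CR vertex for $T$ with $\sigma$. Hence no such $v$ exists, i.e.\ $u$ is a non-CR vertex for $T'(u,\sigma)[V(T)]$, as claimed.

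I do not expect any real obstacle here: once the bookkeeping of the induced dominating relation $\sigma'$ and of the identity $H(u,\sigma') = T'(u,\sigma)$ is in place, the argument is a one-line deduction from Theorem~\ref{switchnochgcore}. The only point requiring a moment's care is recording that ``switch of'' is a symmetric relation, so that Theorem~\ref{switchnochgcore} may legitimately be applied in the direction from $T'(u,\sigma)$ back to $T(u,\sigma)$.
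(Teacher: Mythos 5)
Your proof is correct and follows essentially the same route as the paper: assume $u$ is a CR vertex for $T'(u,\sigma)[V(T)]$, transport the CR-associated pair back to $T(u,\sigma)$ via Theorem~\ref{switchnochgcore}, and derive a contradiction. Your explicit remark that switching is an involution is a small bookkeeping point the paper leaves implicit, but the argument is the same.
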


\begin{proof}
	Suppose that $u$ is a CR vertex for $T'(u,\sigma)[V(T)]$, then there exists a vertex $v\in V(T)$ such that $u$ and $v$ are CR-associated vertices in $T'(u,\sigma)$, and thus $u$ and $v$ are also CR-associated vertices in $T(u,\sigma)$ by Theorem \ref{switchnochgcore}, a contradiction.
	
	Therefore, $u$ is also a non-CR vertex for $T'(u,\sigma)[V(T)]$.
\end{proof}

\begin{theorem}\label{thforswiso}
	Let $T_1$ and $T_2$ be tournaments such that $T_1$ is switching isomorphic to $T_2$. Then
	\item{\rm(i)} $T_1$ is a CR tournament if and only if $T_2$ is a CR tournament.
	\item{\rm(ii)} $T_1$ is a strong CR tournament if and only if $T_2$ is a strong CR tournament.
	\item{\rm(iii)} $T_1$ is a basic tournament if and only if $T_2$ is a basic tournament.
	\item{\rm(iv)} $T_1$ is a basic CR tournament if and only if $T_2$ is a basic CR tournament.
	\item{\rm(v)} $T_1$ is a basic strong CR tournament if and only if $T_2$ is a basic strong CR tournament.
\end{theorem}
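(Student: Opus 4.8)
The plan is to reduce Theorem~\ref{thforswiso} to the switching invariances already established in Theorems~\ref{switchCRtournament}, \ref{switchstrongCR} and \ref{switchbasic}, combined with the (routine) fact that each of the three properties in question is preserved under plain graph isomorphism. By Definition~\ref{defswitchiso}, since $T_1$ is switching isomorphic to $T_2$, there is a switch $T_1'$ of $T_1$ that is isomorphic to $T_2$. Every claim will then factor through $T_1'$: first move from $T_1$ to $T_1'$ via the relevant switching-invariance theorem, then move from $T_1'$ to $T_2$ via isomorphism invariance.

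First I would record the isomorphism-invariance facts precisely. If $\phi\colon V(T_1')\to V(T_2)$ is an isomorphism, then $\phi$ preserves the dominating relation $\theta$, hence carries covertices to covertices and revertices to revertices, and therefore maps CR-associated pairs to CR-associated pairs; this already yields that $T_1'$ is a basic tournament iff $T_2$ is. Moreover $\phi$ induces a bijection between subtournaments that preserves skew-adjacency matrices up to simultaneous row/column permutation, so $\det$ is preserved on all subtournaments; hence $T_1'\in\mathcal{D}_k\backslash\mathcal{D}_{k-2}$ iff $T_2\in\mathcal{D}_k\backslash\mathcal{D}_{k-2}$, and, for a new vertex $u$ with a dominating relation $\sigma$, $\phi$ extends to an isomorphism $T_1'(u,\sigma)\to T_2(u,\sigma')$ matching CR vertices with CR vertices and matching non-CR vertices with non-CR vertices, while respecting membership in $\mathcal{D}_k$. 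Feeding these observations into Definitions~\ref{defCR} and \ref{defstrongCR} shows $T_1'$ is a CR tournament iff $T_2$ is, and $T_1'$ is a strong CR tournament iff $T_2$ is.

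Next I would assemble the chain of equivalences. For (i): $T_1$ is a CR tournament iff $T_1'$ is a CR tournament (Theorem~\ref{switchCRtournament}, as $T_1'$ is a switch of $T_1$) iff $T_2$ is a CR tournament (isomorphism invariance). Part (ii) is identical with Theorem~\ref{switchstrongCR} replacing Theorem~\ref{switchCRtournament}, and part (iii) uses Theorem~\ref{switchbasic} in the same way. Parts (iv) and (v) are then immediate: ``basic CR tournament'' means ``basic tournament that is a CR tournament'', so (iv) follows by conjoining (i) and (iii), and (v) follows by conjoining (ii) and (iii).

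I do not expect a genuine obstacle here; the only point needing care is writing out isomorphism invariance of the three properties, and even that is mechanical because each property is phrased purely in terms of $\theta$, CR-associated vertices, and determinants of (sub)tournaments, all of which transfer along an isomorphism. The mild subtlety worth flagging is in (ii): one must know that forming a $1$-transitive blowup commutes both with switching (this is exactly Lemma~\ref{lemtranblowswitcheq}, already used inside the proof of Theorem~\ref{switchstrongCR}) and with isomorphism, so that the quantifier ``every $1$-transitive blowup is a CR tournament'' transfers correctly along the passage from $T_1$ to $T_1'$ and then to $T_2$; since the switching half is handled by the cited lemma and the isomorphism half is trivial, no new work is required.
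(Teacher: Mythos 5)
Your proposal is correct and follows essentially the same route as the paper, which likewise combines Definition \ref{defswitchiso} with Theorems \ref{switchCRtournament}, \ref{switchstrongCR} and \ref{switchbasic} plus the (stated-but-unproved) invariance of all three properties under plain isomorphism; the paper simply omits the details you spell out. Your extra care with isomorphism invariance and with how $1$-transitive blowups transfer is a welcome elaboration but not a different argument.
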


\begin{proof}
	As (i), (ii) and (iii) implies (iv) and (v), we only need to show that (i), (ii), and (iii) hold when $T_1$ and $T_2$ are switching equivalent with the vertex set $V$. Without loss of generality, we only need to prove that if $T_1$ is a CR tournament (strong CR tournament, basic tournament), then $T_2$ is also a CR tournament (strong CR tournament, basic tournament).
	
	{\bf Proof of (i):} Assume that $T_1\in \mathcal{D}_{k} \backslash \mathcal{D}_{k-2}$. Since $T_2$ is a switch of $T_1$, we have $T_2\in \mathcal{D}_k\backslash \mathcal{D}_{k-2}$ by Corollary \ref{switchDk}.
	
	If $T_1$ is a trivial CR tournament, the result holds immediately by Definition \ref{defCR}. Now we suppose that $T_1$ is not a trivial CR tournament.
	
	Let $u$ be a non-CR vertex for $T_2$ with a dominating relation $\sigma$, and $T_2(u,\sigma)$ be the tournament generated by $T_2$ and $u$ with $\sigma$. Then there exists a switch of $T_2(u,\sigma)$, denoted by $T'_2(u,\sigma)$, such that $T'_2(u,\sigma)[V]=T_1$. Now we show $T_2(u,\sigma) \notin \mathcal{D}_k$.
	
	If $T_2(u,\sigma)\in \mathcal{D}_k$, then $T'_2(u,\sigma)\in \mathcal{D}_k$ by Corollary \ref{switchDk} and the fact that $T'_2(u,\sigma)$ is a switch of $T_2(u,\sigma)$. Now $T'_2(u,\sigma)$ is a tournament generated by $T_1$ and $u$ with the dominating relation determined by the structure of $T'_2(u,\sigma)$.  If $u$ is a non-CR vertex for $T_1$, then by {\rm(ii)} of Definition \ref{defCR} and the fact that $T_1$ is a CR tournament but not trivial, we have $T'_2(u,\sigma)\notin \mathcal{D}_{k}$, a contradiction. Therefore, $u$ is a CR vertex for $T_1$, which implies there exists a vertex $v\in V$ such that $u$ and $v$ are CR-associated vertices in $T'_2(u,\sigma)$. By Lemma \ref{switchnochgcore}, $u$ and $v$ are CR-associated vertices in $T_2(u,\sigma)$, which contradicts that $u$ is a non-CR vertex for $T_2$. Therefore, $T_2(u,\sigma) \notin \mathcal{D}_k$, it follows that $T_2$ is a CR tournament.
	
	{\bf Proof of (ii):} Let $\hat{T_2}$ be a $1$-transitive blowup of $T_2$. Then by Lemma \ref{lemtranblowswitcheq}, $\hat{T_2}$ is switching equivalent to a $1$-transitive blowup of $T_1$, denoted by $\hat{T_1}$. Since $T_1$ is a strong CR tournament, we have $\hat{T_1}$ is a CR tournament. Then $\hat{T_2}$ is a CR tournament by (i). Therefore, $T_2$ is a strong CR tournament by Definition \ref{defstrongCR}.
	
	{\bf Proof of (iii):} If $T_1$ is a basic tournament and $T_2$ is not a basic tournament, then there exist two vertices $\{v_1,v_2\}\subset V$ such that $v_1$ and $v_2$ are CR-associated vertices in $T_2$, and thus $v_1$ and $v_2$ are CR-associated vertices in $T_1$ by Lemma \ref{switchnochgcore}, which contradicts that $T_1$ is a basic tournament. Therefore, $T_2$ is also a basic tournament.
\end{proof}

Based on the above results, we now show some further properties of CR tournaments, strong CR tournaments and basic tournaments, .

\begin{proposition}\label{tranblowuptoCR}
	Let $T$ be an $n$-tournament with $V(T)=\{v_1,\ldots,v_n\}$, $\hat{T}=T(a_1,\ldots,a_n)$ be a transitive blowup of $T$ with $a_i=|V(H_i)|$ as in Definition \ref{DefBLWOUP}. If $\hat{T}$ is a CR tournament, then $T$ is a CR tournament.
\end{proposition}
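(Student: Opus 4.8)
The plan is to reduce everything, via a single blowup identity, to the hypothesis on $\hat{T}$. First I would dispose of the degenerate cases: if $|V(T)|\leq 2$ or $T$ is a diamond, then $T$ is a trivial CR tournament by Definition \ref{defCR}{\rm(i)}, and if $|V(T)|=3$, then $T$ is a CR tournament by Proposition \ref{ex3cycle}; in each of these cases there is nothing to prove. So I may assume $|V(T)|=n\geq 4$ and $T$ is not a diamond. Since $\hat{T}$ is a CR tournament, $\hat{T}\in\mathcal{D}_k\backslash\mathcal{D}_{k-2}$ for some odd $k$, and hence $T\in\mathcal{D}_k\backslash\mathcal{D}_{k-2}$ by Corollary \ref{crllblowupclass}. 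Moreover $\hat{T}$ has at least $n\geq 4$ vertices, and if it has exactly four then all $a_i=1$ and $\hat{T}=T$, which is not a diamond; thus $\hat{T}$ is not a trivial CR tournament, so it satisfies clause {\rm(ii)} of Definition \ref{defCR}.

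To show $T$ satisfies clause {\rm(ii)} of Definition \ref{defCR}, I would fix an arbitrary $w\notin V(T)$ and an arbitrary non-CR dominating relation $\tau=(r_1,\ldots,r_n)$ between $w$ and $V(T)$, and prove $T(w,\tau)\notin\mathcal{D}_k$. The crucial step is to lift $\tau$ to a dominating relation $\hat{\tau}$ between $w$ and $V(\hat{T})$: set $w\rightarrow x$ for every $x\in V(H_i)$ when $r_i=1$, and $w\leftarrow x$ for every $x\in V(H_i)$ when $r_i=-1$. Unwinding Definition \ref{DefBLWOUP}, the tournament $\hat{T}(w,\hat{\tau})$ is exactly the transitive $(a_1,\ldots,a_n,1)$-blowup of $T(w,\tau)$ (that is, blow $w$ up by a single vertex), so by {\rm(ii)} of Lemma \ref{blowupclass} we have $T(w,\tau)\notin\mathcal{D}_k$ if and only if $\hat{T}(w,\hat{\tau})\notin\mathcal{D}_k$. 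It therefore suffices to prove $\hat{T}(w,\hat{\tau})\notin\mathcal{D}_k$, and since $\hat{T}$ is a non-trivial CR tournament this follows from clause {\rm(ii)} of Definition \ref{defCR} once I show that $w$ is a non-CR vertex for $\hat{T}$ with $\hat{\tau}$.

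Proving that $w$ is a non-CR vertex for $\hat{T}$ with $\hat{\tau}$ is the heart of the argument, and the step I expect to be the main obstacle. I would proceed by contradiction: suppose $w$ and some $v\in V(\hat{T})$ are CR-associated in $\hat{T}(w,\hat{\tau})$, say $v\in V(H_i)$. Pick a transversal $R=\{x_\ell:\ell\neq i\}\cup\{v\}$ with $x_\ell\in V(H_\ell)$ for each $\ell\neq i$. Unwinding the blowup definition once more, $\hat{T}(w,\hat{\tau})[\{w\}\cup R]$ is isomorphic to $T(w,\tau)$ via $x_\ell\mapsto v_\ell$, $v\mapsto v_i$, $w\mapsto w$: representatives of distinct blocks inherit the arcs of $T$, the vertex $v$ inherits the external arcs of $v_i$, and $w$ relates to $v$ exactly as $\tau$ prescribes for $v_i$. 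By Corollary \ref{subcore}, $w$ and $v$ remain CR-associated in this induced subtournament, and since being CR-associated depends only on arc directions it is preserved by the isomorphism; hence $w$ and $v_i$ are CR-associated in $T(w,\tau)$, i.e. $w$ is a CR vertex for $T$ with $\tau$, contradicting the choice of $\tau$. So $w$ is a non-CR vertex for $\hat{T}$ with $\hat{\tau}$, and the reduction above completes the proof. Beyond the two routine bookkeeping checks — that the claimed $(n+1)$-vertex blowup of $T(w,\tau)$ coincides with $\hat{T}(w,\hat{\tau})$, and that the displayed restriction is isomorphic to $T(w,\tau)$ — I do not anticipate any real difficulty.
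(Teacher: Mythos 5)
Your proposal is correct and follows essentially the same route as the paper: dispose of the trivial cases, lift a non-CR dominating relation $\tau$ on $T$ to a block-constant relation $\hat{\tau}$ on $\hat{T}$, observe that $\hat{T}(w,\hat{\tau})$ is a transitive blowup of $T(w,\tau)$ so that Lemma \ref{blowupclass} transfers membership in $\mathcal{D}_k$, and show that $w$ remains a non-CR vertex for $\hat{T}$. Your verification of that last point (restrict to a transversal through $v$, apply Corollary \ref{subcore}, and use isomorphism invariance) is just a cleaner packaging of the paper's explicit $\theta$-product computation via Corollary \ref{jugcoretwo}, so there is nothing substantive to add.
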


\begin{proof}
	Assume that $T\in \mathcal{D}_k\backslash \mathcal{D}_{k-2}$ for some odd $k$. Then $\hat{T} \in \mathcal{D}_{k} \backslash \mathcal{D}_{k-2}$ by Corollary \ref{crllblowupclass}.
	
	If $\hat{T}$ is a trivial CR tournament, then it is easy to check that $T$ is a CR tournament by Definition \ref{defCR} and Proposition \ref{ex3cycle}. Now we consider the case that $\hat{T}$ is not a trivial CR tournament.
	
	If $T$ is a $1$-tournament, a $2$-tournament or a diamond, then $T$ is a CR tournament by Definition \ref{defCR}. If $T$ is not  a $1$-tournament, a $2$-tournament or a diamond, then there exist non-CR vertices for $T$.
	Let $u$ $(\notin V(T))$ be a non-CR vertex for $T$ with a dominating relation $\sigma$, $T(u,\sigma)$ be the tournament generated by $T$ and $u$ with $\sigma$, $\tilde{u}$ $(\notin V(\hat{T}))$ be a vertex, and $\hat{T}(\tilde{u},\tilde{\sigma})$ be the tournament generated by $\hat{T}$ and $\tilde{u}$ with the dominating relation $\tilde{\sigma}$ between $\tilde{u}$ and $V(\hat{T})$ such that $\{\tilde{u}\}\rightarrow V(H_i)$ if $u\rightarrow v_i$ and $\{\tilde{u}\}\leftarrow V(H_i)$ if $u\leftarrow v_i$. Clearly, $\hat{T}(\tilde{u},\tilde{\sigma})$ is a transitive blowup of $T(u,\sigma)$ such that $\hat{T}(\tilde{u},\tilde{\sigma})=T(u,\sigma)(a_1,a_2,\ldots,a_n,1)$.
	
	If there exists some $j \in \{1,2,\ldots,n\}$ such that $w\in V(H_j) \subseteq V(\hat{T})$ and $\tilde{u}$ are CR-associated vertices in $\hat{T}(\tilde{u},\tilde{\sigma})$, then it is not hard to see that $u$ and $v_j$ are CR-associated vertices in $T(u,\sigma)$, a contradiction. Hence there exist no such $w\in V(\hat{T})$ satisfying the condition that $\tilde{u}$ and $w$ are CR-associated vertices in $\hat{T}(\tilde{u},\tilde{\sigma})$, which implies $\tilde{u}$ is a non-CR vertex for $\hat{T}$ with $\tilde{\sigma}$ and thus $\hat{T}(\tilde{u},\tilde{\sigma})\notin \mathcal{D}_k$. Then by Lemma \ref{blowupclass}, we have $T(u,\sigma)\notin \mathcal{D}_k$, and thus $T$ is a CR tournament.
\end{proof}

Recall that a tournament $T$ is a strong CR tournament if $T$ is a CR tournament and all $1$-transitive blowups of $T$ are CR tournaments. By using Proposition \ref{tranblowuptoCR}, we have the following proposition, which provide another definition of strong CR tournaments. C ompared with Definition \ref{defstrongCR}, Proposition \ref{strongCR1} allows one to determine whether $T$ is a strong CR tournament by only examining whether every $1$-transitive blowup of $T$ is a CR tournament.

\begin{proposition}\label{strongCR1}
	A tournament $T$ is a strong CR tournament if and only if all $1$-transitive blowups of $T$ are CR tournaments.
\end{proposition}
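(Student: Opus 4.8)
The statement to prove is Proposition \ref{strongCR1}: a tournament $T$ is a strong CR tournament if and only if every $1$-transitive blowup of $T$ is a CR tournament. The forward direction is essentially immediate from Definition \ref{defstrongCR}: if $T$ is a strong CR tournament, then by definition $T$ is itself a CR tournament and every $1$-transitive blowup of $T$ is a CR tournament, so in particular every $1$-transitive blowup of $T$ is a CR tournament. So the content lies entirely in the converse.

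For the converse, assume that every $1$-transitive blowup of $T$ is a CR tournament. By Definition \ref{defstrongCR}, to conclude that $T$ is a strong CR tournament, I must show two things: first that $T$ itself is a CR tournament, and second that every $1$-transitive blowup of $T$ is a CR tournament. The second is exactly the hypothesis, so the only real task is to deduce that $T$ is a CR tournament from the fact that its $1$-transitive blowups are. This is precisely where Proposition \ref{tranblowuptoCR} enters: fix any $1$-transitive blowup $\hat{T} = T(a_1,\ldots,a_n)$ of $T$ (for instance, the one doubling a single chosen vertex). By hypothesis $\hat{T}$ is a CR tournament, and since $\hat{T}$ is a transitive blowup of $T$, Proposition \ref{tranblowuptoCR} yields that $T$ is a CR tournament. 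Combining this with the hypothesis (all $1$-transitive blowups are CR tournaments) gives exactly the two conditions in Definition \ref{defstrongCR}, so $T$ is a strong CR tournament.

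The plan is therefore: (1) record the trivial forward direction directly from Definition \ref{defstrongCR}; (2) for the converse, pick one $1$-transitive blowup $\hat{T}$ of $T$, invoke the hypothesis that $\hat{T}$ is a CR tournament, and apply Proposition \ref{tranblowuptoCR} to conclude $T$ is a CR tournament; (3) combine with the hypothesis to verify Definition \ref{defstrongCR}. There is no genuine obstacle here — all the work has been front-loaded into Proposition \ref{tranblowuptoCR}. The only point requiring a small remark is that the hypothesis quantifies over \emph{all} $1$-transitive blowups, so such blowups exist and at least one can be fed into Proposition \ref{tranblowuptoCR}; this is unproblematic since every tournament admits a $1$-transitive blowup (double any vertex). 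Thus the proposition is a clean corollary of Proposition \ref{tranblowuptoCR} plus Definition \ref{defstrongCR}.
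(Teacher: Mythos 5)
Your proposal is correct and matches the paper's own proof: the forward direction is read off from Definition \ref{defstrongCR}, and the converse applies Proposition \ref{tranblowuptoCR} to a single $1$-transitive blowup (which is a transitive blowup) to deduce that $T$ is a CR tournament, after which the definition gives the conclusion. No further comment is needed.
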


\begin{proof}
	Necessity. If $T$ is a strong CR tournament, then all $1$-transitive blowups of $T$ are CR tournaments.
	
	Sufficiency. If all $1$-transitive blowups of $T$ are CR tournaments, then by Proposition \ref{tranblowuptoCR}, we have $T$ is a CR tournament. Furthermore, $T$ is a strong CR tournament.
\end{proof}

\begin{proposition}\label{lem4maintheorem}
	Let $T$ be a basic tournament and $\hat{T}$ be a transitive blowup of $T$, $u$ be a non-CR vertex for $\hat{T}$ with a non-CR dominating relation $\sigma$. Then $\hat{T}(u,\sigma)$ can not be switching equivalent to a transitive blowup of $T$.
\end{proposition}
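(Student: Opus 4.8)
The plan is to argue by contradiction. Suppose $\hat{T}(u,\sigma)$ is switching equivalent to some transitive blowup $\tilde{T}=T(b_1,\ldots,b_n)$ of $T$, realised via transitive tournaments $H_1',\ldots,H_n'$ with blocks $B_i=V(H_i')$ (so $b_i=|B_i|\ge 1$). Since switching equivalence keeps the vertex set fixed, $B_1,\ldots,B_n$ partition $V(\hat{T})\cup\{u\}$ and $\sum_{i} b_i=\bigl(\sum_{i} a_i\bigr)+1$; let $B_{i_0}$ be the block containing $u$. I would then split the argument according to whether $b_{i_0}\ge 2$ or $b_{i_0}=1$. Throughout, recall that $T$ basic forces $n\ge 4$ (Definition \ref{defbasic}).

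If $b_{i_0}\ge 2$, pick a vertex $w\in B_{i_0}\setminus\{u\}$ that is consecutive to $u$ in the transitive order of $H_{i_0}'$ (an immediate predecessor or successor). A short check — using the blowup structure for $z\notin B_{i_0}$ and the transitivity of $H_{i_0}'$ together with the fact that $u,w$ are consecutive for $z\in B_{i_0}$ — shows $\theta_{\tilde{T}}(u,z)=\theta_{\tilde{T}}(w,z)$ for every $z\in V(\tilde{T})\setminus\{u,w\}$, so $u$ and $w$ are covertices, hence CR-associated, in $\tilde{T}$. By Theorem \ref{switchnochgcore} they are CR-associated in $\hat{T}(u,\sigma)$ as well, and since $w\in B_{i_0}\setminus\{u\}\subseteq V(\hat{T})$, Definition \ref{defcrvertex} says $u$ is a CR vertex for $\hat{T}$ with $\sigma$ — contradicting that $\sigma$ is a non-CR dominating relation between $u$ and $V(\hat{T})$.

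If $b_{i_0}=1$, i.e.\ $B_{i_0}=\{u\}$, then $\tilde{T}\setminus\{u\}=\tilde{T}[V(\hat{T})]$ is a transitive blowup $\hat{H}$ of the $(n-1)$-tournament $T-v_{i_0}$, with the $n-1$ blocks $\{B_j:j\ne i_0\}$. By Lemma \ref{Minors}, restricting the switching equivalence to $V(\hat{T})$ gives that $\hat{T}=\hat{T}(u,\sigma)[V(\hat{T})]$ is switching equivalent to $\hat{H}$. Now choose a transversal $X=\{x_1,\ldots,x_n\}$ of $\hat{T}$ with $x_i\in V(H_i)$, so that $\hat{T}[X]\cong T$ by the definition of a blowup (Definition \ref{DefBLWOUP}). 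Viewing $X$ inside $\hat{H}$, and noting $|X|=n>n-1$ equals the number of blocks of $\hat{H}$, pigeonhole forces some block $B'$ of $\hat{H}$ to contain at least two vertices of $X$. Take two of them, $y_1,y_2$, that are consecutive in the transitive order which $B'$ induces on $B'\cap X$. Then any third vertex of $X$ lies either outside $B'$ (handled by the blowup structure) or in $B'\cap X$ but not between $y_1$ and $y_2$, so $\theta_{\hat{H}}(y_1,z)=\theta_{\hat{H}}(y_2,z)$ for all $z\in X\setminus\{y_1,y_2\}$; hence $y_1,y_2$ are covertices in $\hat{H}[X]$. Since $\hat{H}[X]$ is switching equivalent to $\hat{T}[X]\cong T$ by Lemma \ref{Minors}, Theorem \ref{switchnochgcore} produces two CR-associated vertices in $T$, contradicting that $T$ is a basic tournament. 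Either case yields a contradiction, which proves the proposition.

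The two "covertex checks" are routine, using only Definition \ref{DefBLWOUP} and transitivity of the blocks. The step that requires the most care, and is the real obstacle to making the argument precise, is the observation underlying both cases: two vertices lying in a common block of a transitive blowup need \emph{not} be covertices of the whole tournament (a third block-vertex sitting strictly between them breaks it), but once we pass to a suitable induced subtournament — the block intersected with $X$, or the consecutive pair $\{u,w\}$ — the two vertices that are consecutive in the induced transitive order on that vertex set \emph{are} covertices there. Isolating exactly this statement, and checking that the pigeonholed pair can always be chosen consecutive in the relevant induced order, is where the bulk of the verification lies.
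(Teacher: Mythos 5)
Your proposal is correct and follows essentially the same route as the paper's proof: the same case split on whether the block containing $u$ has size at least $2$ or exactly $1$, the same consecutive-vertex covertex observation inside a block, the same pigeonhole on a transversal $X$ with $\hat{T}[X]\cong T$, and the same use of Lemma \ref{Minors} and Theorem \ref{switchnochgcore} to transfer the CR-association back and contradict either the non-CR hypothesis on $u$ or the basicness of $T$.
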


\begin{proof}
	Assume that $|V(T)|=n$. Then $n\geq 4$ by \ref{defbasic}. Since $\hat{T}$ is a transitive blowup of $T$, there exists a subset $Z\subset V(\hat{T})$ such that $\hat{T}[Z]$ is isomorphic to $T$.
	
	Suppose, for the sake of contradiction, that $\hat{T}(u,\sigma)$ can be switching equivalent to a transitive blowup of $T$, denoted by $\hat{T_1}$. Then there exist positive integers $a_1,a_2,\ldots,a_n$ and subsets $X_1,X_2,\ldots,X_n \subset V(\hat{T_1})$ such that $|X_i|=a_i$ for all $i\in \{1,2,\ldots,n\}$, $\hat{T_1}[X_i]$ is transitive for all $i\in \{1,2,\ldots,n\}$, and $\hat{T_1}$ can be denoted by $T(\hat{T_1}[X_1],\hat{T_1}[X_2],\ldots,\hat{T_1}[X_n])$, or equivalently, $T(a_1,a_2,\ldots,a_n)$.
	
	Let $u\in X_i$, where $i \in \{1,\ldots,n\}$. We complete the proof by the following cases.
	
	\textbf{Case 1}: $a_i \geq 2$.
	
	Let $X_i=\{v_1,v_2,\ldots,v_{a_i}\}$ such that $v_1 \rightarrow v_2 \rightarrow \cdots \rightarrow v_{a_i}$, $u=v_s$ with $1\leq s \leq a_i$ and $w=\begin{cases}
		v_{s+1}, \, \, \quad if\ s\neq a_i; \\
		v_{a_i-1}, \quad if\ s=a_i.
	\end{cases}$ Then $u$ and $w$ are CR-associated vertices in $\hat{T_1}$, and thus $u$ and $w$ are CR-associated vertices in $\hat{T}(u,\sigma)$ by Lemma \ref{switchnochgcore}, which contradicts that $u$ is a non-CR vertex for $\hat{T}$ with $\sigma$.
	
	\textbf{Case 2}: $a_i=1$.
	
	By Lemma \ref{Minors}, $\hat{T}(u,\sigma)[Z]$ and $\hat{T_1}[Z]$ are switching equivalent, then $\hat{T}[Z]$ and $\hat{T_1}[Z]$ are switching equivalent by $\hat{T}[Z]=\hat{T}(u,\sigma)[Z]$. Since $\hat{T}[Z]$ is isomorphic to $T$ and $T$ is a basic tournament, $\hat{T}[Z]$ is a basic tournament. Furthermore, $\hat{T_1}[Z]$ is a basic tournament by Theorem \ref{thforswiso}.
	
	Without loss of generality, we assume that $u \in X_1$, and then $Z\subset X_2\cup X_3\cup \cdots \cup X_n$. Since $|Z|=n$, there exists $X_j$ such that $|Z \cap X_j|=t \geq 2$, where $2 \leq j \leq n$. Let $Z \cap X_j=\{w_1,w_2,\ldots,w_t\}$ such that $w_1 \rightarrow w_2 \rightarrow \cdots \rightarrow w_t$ in $\hat{T_1}$. Then $w_1$ and $w_2$ are CR-associated vertices in $\hat{T_1}[Z]$, which contradicts that $\hat{T_1}[Z]$ is a basic tournament.
	
	Therefore, $\hat{T}(u,\sigma)$ can not be switching equivalent to a transitive blowup of $T$.
\end{proof}

\begin{proposition}\label{basicnotD1}
	Let $T$ be a basic tournament. Then $T\notin \mathcal{D}_1\backslash \mathcal{D}_{-1}$.
\end{proposition}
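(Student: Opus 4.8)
The plan is to argue by contradiction, combining the characterization of $\mathcal{D}_1$ in Theorem \ref{D1} with the switching-invariance of the ``CR-associated'' relation from Theorem \ref{switchnochgcore}. Recall that, by the convention fixed earlier, $\mathcal{D}_1 \backslash \mathcal{D}_{-1} = \mathcal{D}_1$, so it suffices to show that a basic tournament cannot lie in $\mathcal{D}_1$.

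First I would record that, by Definition \ref{defbasic}, a basic tournament $T$ has order $n = |V(T)| \geq 4$. Suppose toward a contradiction that $T \in \mathcal{D}_1$. By Theorem \ref{D1}, $T$ is switching equivalent to a transitive tournament $T'$ on the same vertex set; fix a transitive ordering $v_1 \rightarrow v_2 \rightarrow \cdots \rightarrow v_n$ of $V(T') = V(T)$.

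Next I would exhibit an explicit CR-associated pair inside the transitive model $T'$. Since $n \geq 4 \geq 3$, consider $v_1$ and $v_2$: for every $v_k$ with $3 \leq k \leq n$ we have $v_1 \rightarrow v_k$ and $v_2 \rightarrow v_k$, so $\theta_{T'}(v_1,v_k)\cdot\theta_{T'}(v_2,v_k) = 1$. By Proposition \ref{jugcoreone}(i), $v_1$ and $v_2$ are covertices in $T'$, hence CR-associated vertices in $T'$. Finally, since switching equivalence is symmetric, $T$ is a switch of $T'$, so Theorem \ref{switchnochgcore} transports this pair back: $v_1$ and $v_2$ are CR-associated vertices in $T$ as well. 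This contradicts the hypothesis that $T$ is basic, and completes the proof. (Alternatively, one could phrase the last step via Theorem \ref{switchbasic}: $T'$ is not basic by the covertex computation, hence neither is $T$.)

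There is essentially no hard step here; the only points requiring minor care are that a basic tournament has at least four vertices, so that the transitive ordering has length $\geq 3$ and the covertex computation actually involves an ``outside'' vertex $v_k$, and that Theorem \ref{switchnochgcore} is applied in the correct direction --- from the transitive model $T'$ back to $T$ --- which is legitimate precisely because switching equivalence is symmetric.
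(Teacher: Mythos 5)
Your proof is correct and follows essentially the same route as the paper: both argue by contradiction, replace $T$ by a switching-equivalent transitive model (the paper cites Theorem \ref{D5character}(i) via transitive blowups of $L_2$, you cite Theorem \ref{D1}(ii) directly, which is the same thing), exhibit a covertex pair there, and transport it back to $T$ via the switching invariance of the CR-associated relation to contradict basicness.
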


\begin{proof}
	By Definition \ref{defbasic}, $|V(T)|\geq 4$. If $T\in \mathcal{D}_1\backslash \mathcal{D}_{-1}$, then by Theorem \ref{D5character}, $T$ is switching equivalent to a transitive blowup of $L_2$, denoted by $T'$. Since $T$ is a basic tournament, we have $T'$ is a basic tournament by Theorem \ref{thforswiso}. On the other hand, since $T'$ is a transitive blowup of $L_2$, there exists $\{v_1,v_2\}\subset V(T')=V(T)$ such that $v_1$ and $v_2$ are CR-associated vertices in $T'$, which implies $T'$ is not a basic tournament, a contradiction.  Therefore, $T\notin \mathcal{D}_1\backslash \mathcal{D}_{-1}$.
\end{proof}

\section{A main result on basic strong CR tournaments}\label{sec-proofoftheorem}
\hspace{1.5em}Let $H$ be a tournament. In this paper, we use $\xi(H)$ to denote the set of tournaments such that $T\in \xi(H)$ if and only if $T$ contains a subtournament which is switching isomorphic to $H$.

Let $T$ be an $n$-tournament, $u\in V(T)$ and $H$ be a subtournament of $T-u$ (i.e., $T-u=T[V(T)\backslash \{u\}]$). Then the dominating relation $\sigma$ between $u$ and $V(H)$ in $T$ is known and $H(u,\sigma)=T[V(H)\cup \{u\}]$, we usually denote $H(u,\sigma)$ by $H(u)$ for short, and simply say ``$u$ is a CR vertex (resp. non-CR vertex) for $H$'' if $u$ is a CR vertex (resp. non-CR vertex) for $H$ with $\sigma$ (omitting the specification of $\sigma$) in the following.

The following Theorem \ref{maintheorembasic} establishes a connection between a basic strong CR tournament $H$ and $\xi(H)$, which is our main theorem on CR tournaments. We will subsequently show how to use this theorem to get the same characterizations of $\mathcal{D}_3\backslash \mathcal{D}_1$ and $\mathcal{D}_5 \backslash \mathcal{D}_3$ as in Theorem \ref{D5character}, which implies Theorem \ref{maintheorembasic} maybe a useful tool for characterizing $\mathcal{D}_k$.

\begin{theorem}\label{maintheorembasic}
	Let $k$ $(\geq 3)$ be odd and $H\in \mathcal{D}_k\backslash \mathcal{D}_{k-2}$ be a basic tournament. Then the following assertions are equivalent:
	\item{\rm(i)} $H$ is a strong CR tournament.
	\item{\rm(ii)} All transitive blowups of $H$ are CR tournaments.
	\item{\rm(iii)} $T\in \xi(H) \cap (\mathcal{D}_k\backslash\mathcal{D}_{k-2})$ if and only if $T$ is switching equivalent to a transitive blowup of $H$.
\end{theorem}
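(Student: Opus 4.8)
The plan is to prove the three equivalences in the cyclic order (i) $\Rightarrow$ (ii) $\Rightarrow$ (iii) $\Rightarrow$ (i), leaning on the switching-invariance results of Section~\ref{sec-invariant} and the structural propositions of Section~\ref{sec-preliminaries} throughout. For (i) $\Rightarrow$ (ii): a transitive blowup of $H$ is built from $H$ by a sequence of $1$-transitive blowups (blowing up one vertex by one at a time), so I would induct on $|V(\hat T)|-|V(H)|$, using Definition~\ref{defstrongCR} at each step. The subtlety is that after one $1$-transitive blowup we obtain a tournament $H'$ which is a $1$-transitive blowup of $H$, hence a CR tournament by hypothesis, but to iterate we need $H'$ itself to be a basic strong CR tournament (or at least a strong CR tournament). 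Basicness is preserved by Proposition~\ref{lem2maintheorem} (blowing up one vertex into a $3$-cycle keeps a basic tournament basic; for a $2$-blowup one checks the CR-associated pair directly), and "strong CR" must be argued by showing that every $1$-transitive blowup of $H'$ is again a $1$-transitive or $2$-transitive blowup of $H$, hence a transitive blowup of $H$, and then invoking Proposition~\ref{strongCR1} together with an inductive hypothesis. I expect this bookkeeping — tracking that iterated $1$-transitive blowups stay within "transitive blowups of $H$" and that each intermediate stage is strong CR — to be the main technical obstacle in this direction.

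For (ii) $\Rightarrow$ (iii), the "if" direction is immediate: if $T$ is switching equivalent to a transitive blowup of $H$ then $T$ contains a subtournament switching isomorphic to $H$ by Lemma~\ref{Minors}, so $T\in\xi(H)$, and $T\in\mathcal D_k\backslash\mathcal D_{k-2}$ by Corollary~\ref{crllblowupclass} and Corollary~\ref{switchDk}. The substantive direction is "only if". Suppose $T\in\xi(H)\cap(\mathcal D_k\backslash\mathcal D_{k-2})$. After switching I may assume $T$ contains an induced copy of $H$ on a vertex set $Z$. I would induct on $|V(T)|$. If $V(T)=Z$ we are done. Otherwise pick $u\in V(T)\backslash Z$ and consider $H(u)=T[Z\cup\{u\}]$. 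Since $H$ is a strong CR tournament (which follows from (ii) via Proposition~\ref{strongCR1}) and $T[Z\cup\{u\}]\in\mathcal D_k$, Definition~\ref{defCR2} forces $u$ to be a CR vertex for $H$; hence by Lemma~\ref{lemCRtotransblowup} there is a switch making $T[Z\cup\{u\}]$ a $1$-transitive blowup of $H$, so there is $v\in Z$ with $u,v$ CR-associated in $T[Z\cup\{u\}]$. The goal is then to show that, after replacing $T$ by a suitable switch, $T-u$ contains a copy of $H$ on $Z$ and $T$ is a transitive blowup of (a transitive blowup of) $H$; applying the inductive hypothesis to $T-u$ (which lies in $\mathcal D_k\backslash\mathcal D_{k-2}$ by Corollary~\ref{crllblowupclass}, since deleting a CR vertex corresponds to "un-blowing-up") and then re-inserting $u$ via Lemma~\ref{lemtranblowswitcheq} completes the step. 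The delicate point is choosing the switch so that the CR-associations are compatible across all added vertices; Propositions~\ref{lem1maintheorem} and~\ref{lem3maintheorem} are exactly the tools that guarantee the relevant vertices remain CR-associated and that basicness of $H$ prevents conflicting associations, so I would route the argument through them.

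Finally, (iii) $\Rightarrow$ (i): I must show $H$ is a CR tournament and that every $1$-transitive blowup of $H$ is a CR tournament. That $H$ is a CR tournament follows by taking $T=H(u,\sigma)$ for an arbitrary non-CR dominating relation $\sigma$: if $H(u,\sigma)\in\mathcal D_k$ then $H(u,\sigma)\in\mathcal D_k\backslash\mathcal D_{k-2}$ by Lemma~\ref{blowupclass} (it contains $H$), so by (iii) $H(u,\sigma)$ is switching equivalent to a transitive blowup of $H$; but then Proposition~\ref{lem4maintheorem} (with $\hat T=H$, the trivial blowup) says this is impossible when $u$ is non-CR, so $H(u,\sigma)\notin\mathcal D_k$, which is exactly part~(ii) of Definition~\ref{defCR}. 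For a $1$-transitive blowup $H'$ of $H$: $H'\in\mathcal D_k\backslash\mathcal D_{k-2}$ by Corollary~\ref{crllblowupclass}, $H'$ is basic by Proposition~\ref{lem2maintheorem}, and any $H'(u,\sigma)$ with $u$ non-CR for $H'$, if it were in $\mathcal D_k$, would be switching equivalent to a transitive blowup of $H$ by (iii), contradicting Proposition~\ref{lem4maintheorem} applied to the transitive blowup $\hat T=H'$. Hence $H'$ is a CR tournament, and by Proposition~\ref{strongCR1} $H$ is strong CR. The key insight that makes the whole cycle close is Proposition~\ref{lem4maintheorem}: it is the bridge converting the membership statement (iii) into the combinatorial non-CR obstruction needed for the CR-tournament definition, and I expect verifying its hypotheses in each application (that the relevant tournament is genuinely a transitive blowup of $H$ and that $u$ is genuinely non-CR) to require the most care.
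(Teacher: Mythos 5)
Your cycle (i) $\Rightarrow$ (ii) $\Rightarrow$ (iii) $\Rightarrow$ (i) has a genuine gap at the first arrow, and it is not merely ``bookkeeping.'' You propose to prove that a transitive blowup $\hat T$ of $H$ is a CR tournament by writing $\hat T$ as a $1$-transitive blowup of a smaller transitive blowup $H'$ and inducting on $|V(\hat T)|-|V(H)|$. The inductive hypothesis gives you only that $H'$ is a \emph{CR} tournament; to conclude that its $1$-transitive blowup $\hat T$ is CR you need $H'$ to be a \emph{strong} CR tournament, which by Definition~\ref{defstrongCR} (equivalently Proposition~\ref{strongCR1}) is \emph{literally the statement} that all $1$-transitive blowups of $H'$ --- tournaments at the same inductive level as $\hat T$ --- are CR. So the induction is circular: the strongness of $H'$ is not supplied by the hypothesis at level $d-1$, and nothing in Sections~\ref{sec-CR}--\ref{sec-invariant} lets you promote ``CR'' to ``strong CR'' under blowup. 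Indeed the paper explicitly records this as open (Question~\ref{allCRstrong} and Remark~\ref{remarkstrongCR}): whether CR-ness is preserved by ($1$-)transitive blowup is precisely what is \emph{not} known in general, and Theorem~\ref{maintheorembasic} is the device that establishes it for basic strong CR tournaments --- so it cannot be assumed on the way to proving the theorem.

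The paper escapes this by ordering the implications differently: it proves (i) $\Rightarrow$ (iii) directly (a global argument: every vertex outside a fixed copy $X$ of $H$ must be a CR vertex for $T[X]$, the outside vertices are partitioned by their unique CR-associate in $X$ via Proposition~\ref{lem3maintheorem}, a single switch makes all associations covertex-type, and Propositions~\ref{lem1maintheorem} and~\ref{lem2maintheorem} force the blowup arcs and the transitivity of each part), and only then deduces (ii) from (iii) using Proposition~\ref{lem4maintheorem}: a non-CR extension of a transitive blowup of a basic tournament can never be switching equivalent to a transitive blowup of $H$, so by (iii) it leaves $\mathcal D_k$. Your (iii) $\Rightarrow$ (i) step already contains exactly this lem4maintheorem mechanism (applied to $1$-transitive blowups), and your (ii) $\Rightarrow$ (iii) induction is a plausible, if under-detailed, reorganization of the paper's global partition argument using the same three propositions. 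The repair is therefore structural rather than local: derive the structure theorem (iii) from (i) first, and obtain (ii) as a consequence of (iii) via Proposition~\ref{lem4maintheorem}, rather than attempting (i) $\Rightarrow$ (ii) by iterated $1$-transitive blowups.
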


To prove Theorem \ref{maintheorembasic}, we need the following lemmas.

\begin{lemma}\label{lem1maintheorem}
Let $T$ be a tournament with $V(T)=\{u_1,u_2,v_1,v_2,\ldots,v_n\}$, $T_s=T[\{v_1,v_2,\ldots,v_n\}]$ be a basic tournament, and there exists $i,j \in \{1,\ldots,n\}$ with $i \neq j$ such that $u_1$ and $v_i$ are covertices in $T[V(T_s)\cup \{u_1\}]$, $u_2$ and $v_j$ are covertices in $T[V(T_s)\cup \{u_2\}]$, $\theta_T(u_1,u_2)\cdot \theta_T(v_i,v_j)=-1$. Then $T$ is a basic tournament.
\end{lemma}

\begin{proof}
	Let $T^{(k)}_s=T[V(T_s)\cup \{u_k\}]$ for $k \in \{1,2\}$. Without loss of generality, we assume that $i=1$ and $j=2$. Since $T_s$ is a basic tournament, we have $n \geq 4$ by Definition \ref{defbasic}.
	
	Suppose, for the sake of contradiction, that $T$ is not a basic tournament. Then there exist two vertices $w_1,w_2$ of $T$ such that $w_1$ and $w_2$ are CR-associated vertices in $T$.
	
	\textbf{Case 1}: $\{w_1,w_2\}\subset \{v_1,v_2,\ldots,v_n\}$.
	
	In this case, $w_1$ and $w_2$ are CR-associated vertices in $T_s$ by Corollary \ref{subcore}, which contradicts that $T_s$ is a basic tournament. 
	
	\textbf{Case 2}: $w_q\in \{u_1,u_2\}$, $w_p\in \{v_3,v_4,\ldots,v_n\}$, where $q\in \{1,2\}$, $p\in \{1,2\}\backslash \{q\}$.
	
	Without loss of generality, we only need to consider $q=1$. 
	
	Let $w_1=u_k$ and $X=\{u_k\}\cup \{v_1,v_2,\ldots,v_n\}\backslash\{v_k\}$, where $k \in \{1,2\}$. Then $T[X]$ is isomorphic to $T_s$ since $u_k$ and $v_k$ are covertices in $T^{(k)}_s$, and thus $T[X]$ is a basic tournament. But $w_1$ and $w_2$ are CR-associated vertices in $T[X]$ by Corollary \ref{subcore}, there is a contradiction.
	
	\textbf{Case 3}: $\{w_1,w_2\}\subset \{u_1,u_2,v_1,v_2\}$.
	
	\textbf{Subcase 3.1}: $\{w_1,w_2\}=\{v_1,v_2\}$.
	
	In this subcase, $v_1$ and $v_2$ are CR-associated vertices in $T_s$ by Corollary \ref{subcore}, which contradicts that $T_s$ is a basic tournament.
	
	\textbf{Subcase 3.2}: $w_q\in \{u_1,u_2\}$, $w_p\in \{v_1,v_2\}$, where $q\in \{1,2\}$, $p\in \{1,2\}\backslash \{q\}$.
	
	Without loss of generality, we can assume that $q=1$ and $w_1=u_1$.
	
	If $w_2=v_1$, then $\theta_T(w_1,v_2)\cdot \theta_T(w_2,v_2)=\theta_T(u_1,v_2)\cdot \theta_T(v_1,v_2)=1$ since $u_1$ and $v_1$ are covertices in $T^{(1)}_s$, and thus $w_1$ and $w_2$ are covertices in $T$ by the assumption that $w_1$ and $w_2$ are CR-associated vertices in $T$, which implies $\theta_T(w_1,u_2)\cdot \theta_T(w_2,u_2)=1$. On the other hand, since $u_2$ and $v_2$ are covertices in $T^{(2)}_s$, we have $\theta_T(v_1,u_2)=\theta_T(v_1,v_2)$, and thus $\theta_T(w_1,u_2)\cdot \theta_T(w_2,u_2)=\theta_T(u_1,u_2)\cdot \theta_T(v_1,u_2)=\theta_T(u_1,u_2)\cdot \theta_T(v_1,v_2)=-1$, a contradiction.
	
	If $w_2=v_2$, then $u_1$ and $v_2$ are CR-associated vertices in $T$, it follows that $u_1$ and $v_2$ are CR-associated vertices in $T^{(1)}_s$ by Corollary \ref{subcore}. Since $u_1$ and $v_1$ are covertices in $T^{(1)}_s$, then for any $\{v_{i_1},v_{i_2}\}\in V(T_s)\backslash \{v_1,v_2\}$, we have
	$$
	\begin{aligned}
		&(\theta_{T_s}(v_1,v_{i_1})\cdot\theta_{T_s}(v_2,v_{i_1}))\cdot (\theta_{T_s}(v_1,v_{i_2})\cdot\theta_{T_s}(v_2,v_{i_2}))\\
		=&(\theta_{T^{(1)}_s}(u_1,v_{i_1})\cdot\theta_{T^{(1)}_s}(v_2,v_{i_1}))\cdot (\theta_{T^{(1)}_s}(u_1,v_{i_2})\cdot\theta_{T^{(1)}_s}(v_2,v_{i_2}))\\
		=&1.
	\end{aligned}
	$$
	Therefore, $v_1$ and $v_2$ are CR-associated vertices in $T_s$ by Corollary \ref{jugcoretwo}, which contradicts that $T_s$ is a basic tournament. 
	
	\textbf{Subcase 3.3}: $\{w_1,w_2\}=\{u_1,u_2\}$.
	
	By using Corollary \ref{jugcoretwo}, we have $(\theta_T(u_1,v_{i_1})\cdot\theta_T(u_2,v_{i_1}))\cdot (\theta_T(u_1,v_{i_2})\cdot\theta_T(u_2,v_{i_2}))=1$ for any $\{v_{i_1},v_{i_2}\}\in V(T)\backslash \{u_1,u_2\}$. Since $u_1$ and $v_1$ are covertices in $T^{(1)}_s$, $u_2$ and $v_2$ are covertices in $T^{(2)}_s$, we have
	$$
	\begin{aligned}
		&(\theta_{T_s}(v_1,v_{i_1})\cdot\theta_{T_s}(v_2,v_{i_1}))\cdot (\theta_{T_s}(v_1,v_{i_2})\cdot\theta_{T_s}(v_2,v_{i_2}))\\
		=&(\theta_{T^{(1)}_s}(u_1,v_{i_1})\cdot\theta_{T^{(2)}_s}(u_2,v_{i_1}))\cdot (\theta_{T^{(1)}_s}(u_1,v_{i_2})\cdot\theta_{T^{(2)}_s}(u_2,v_{i_2}))\\
		=&(\theta_{T}(u_1,v_{i_1})\cdot\theta_{T}(u_2,v_{i_1}))\cdot (\theta_{T}(u_1,v_{i_2})\cdot\theta_{T}(u_2,v_{i_2}))\\
		=&1
	\end{aligned}
	$$
	for any $\{v_{i_1},v_{i_2}\}\in V(T_s)\backslash \{v_1,v_2\}$. Thus $v_1$ and $v_2$ are CR-associated vertices in $T_s$ by Corollary \ref{jugcoretwo}, which contradicts that $T_s$ is a basic tournament. 
	
	Combining the above arguments, $T$ is a basic tournament.
\end{proof}

\begin{lemma}\label{lem2maintheorem}
	Let $T$ be a basic tournament with $V(T)=\{v_1,v_2,\ldots,v_n\}$, $T^{*}$ be a blowup of $T$ with respect to $H_1,H_2,\ldots,H_n$, where $H_i$ is a 3-cycle and $|V(H_j)|=1$ for each $j\in \{1,2,\ldots,n\}\backslash \{i\}$. Then $T^{*}$ is a basic tournament.
\end{lemma}

\begin{proof}
	Without loss of generality, we assume that $H_1$ is a 3-cycle. Let $V(H_1)=\{w_1,w_2,w_3\}$ such that $w_1\rightarrow w_2$, $w_2\rightarrow w_3$ and $w_3\rightarrow w_1$, and $V(H_j)=\{u_j\}$ for each $j\in \{2,3,\ldots,n\}$. Then $V(T^{*})=\{w_1,w_2,w_3,u_2,\ldots,u_n\}$. Since $T$ is a basic tournament, we have $n\geq 4$ and $T^{*}[\{w_i,u_2,\ldots,u_n\}]$ is a basic tournament by the fact that $T^{*}[\{w_i,u_2,\ldots,u_n\}]$ is isomorphic to $T$ for $i \in \{1,2,3\}$.
	
	If $u_{j_1}$ and $u_{j_2}$ are CR-associated vertices in $T^{*}$ for $2 \leq j_1,j_2 \leq n$ with $j_1 \neq j_2$, then $u_{j_1}$ and $u_{j_2}$ are CR-associated vertices in $T^{*}[\{w_1,u_2,\ldots,u_n\}]$ by Corollary \ref{subcore}, which implies a contradiction since $T^{*}[\{w_1,u_2,\ldots,u_n\}]$ is a basic tournament.
	
	If $w_i$ and $u_j$ are CR-associated vertices in $T^{*}$ for $i \in \{1,2,3\}$ and $j \in \{2,3,\ldots,n\}$, then  $w_i$ and $u_j$ are also CR-associated vertices in $T^{*}[\{w_i,u_2,\ldots,u_n\}]$, a contradiction. 
	
	If $w_i$ and $w_j$ are CR-associated vertices in $T^{*}$ for $1 \leq i,j \leq 3$ with $i \neq j$, then $w_i$ and $w_j$ are CR-associated vertices in $T^{*}[\{w_1,w_2,w_3,u_2\}]$ by Corollary \ref{subcore}. But $T^{*}[\{w_1,w_2,w_3,u_2\}]$ is a diamond, and a diamond is a basic tournament by Definition \ref{defbasic}, a contradiction.
	
	Combining the above arguments, there exist no two vertices of $T^{*}$ such that the two vertices are CR-associated vertices in $T^{*}$. Therefore, $T^{*}$ is a basic tournament.
\end{proof}
\begin{lemma}\label{lem3maintheorem}
	Let $T$ be a basic $n$-tournament, $u\notin V(T)=\{v_1,v_2,\ldots,v_n\}$, $\sigma$ be a dominating relation between $u$ and $V(T)$, $T(u,\sigma)$ be a tournament generated by $T$ and $u$ with $\sigma$. If there exists $v_i \in V(T)$ such that $u$ and $v_i$ are CR-associated vertices in $T(u,\sigma)$, then $u$ and $v_j$ are not CR-associated vertices in $T(u,\sigma)$ for $j \in \{1,2,\ldots,n\} \backslash \{i\}$.
\end{lemma}

\begin{proof}
	Without loss of generality, we assume that $u$ and $v_1$ are CR-associated vertices in $T(u,\sigma)$. Suppose, for the sake of contradiction, that there exists $j\in \{2,\ldots,n\}$ such that $u$ and $v_j$ are CR-associated vertices in $T(u,\sigma)$. Since $T$ is a subtournament of $T(u,\sigma)$, then for any $k \in \{2,3,\ldots,n\} \backslash \{j\}$, we have
	\begin{align*}
		\theta_{T}(v_1,v_k) \cdot \theta_{T}(v_j,v_k)&=\theta_{T(u,\sigma)}(v_1,v_k) \cdot \theta_{T(u,\sigma)}(v_j,v_k) \\
		&=\theta_{T(u,\sigma)}(v_1,v_k) \cdot \theta_{T(u,\sigma)}(v_j,v_k) \cdot (\theta_{T(u,\sigma)}(u,v_k))^2\\
		&=(\theta_{T(u,\sigma)}(v_1,v_k)\cdot \theta_{T(u,\sigma)}(u,v_k)) \cdot (\theta_{T(u,\sigma)}(v_j,v_k) \cdot \theta_{T(u,\sigma)}(u,v_k)).
	\end{align*}
	
	By Proposition \ref{jugcoreone}, $\theta_{T(u,\sigma)}(v_1,v_k)\cdot \theta_{T(u,\sigma)}(u,v_k) = \alpha_1$ for any $k \in \{2,3,\ldots,n\} \backslash \{j\}$, where $\alpha_1 \in \{1,-1\}$ is a constant. Similarly, $\theta_{T(u,\sigma)}(v_j,v_k)\cdot \theta_{T(u,\sigma)}(u,v_k)= \alpha_2$ for any $k \in \{2,3,\ldots,n\} \backslash \{j\}$, where $\alpha_2 \in \{1,-1\}$ is a constant. Then $\theta_{T}(v_1,v_k) \cdot \theta_{T}(v_j,v_k) = \alpha_1 \alpha_2$ for any $k \in \{2,3,\ldots,n\} \backslash \{j\}$, where $\alpha_1 \alpha_2 \in \{1,-1\}$ is a constant. By Proposition \ref{jugcoreone}, $v_1$ and $v_j$ are CR-associated vertices in $T$, which contradicts that $T$ is a basic tournament.
\end{proof}

Let $H \in \mathcal{D}_{k} \backslash \mathcal{D}_{k-2}$ be a basic tournament with odd $k$. Then $k \geq 3$ by Proposition \ref{basicnotD1}. Now we show Theorem \ref{maintheorembasic} holds.

\begin{proof}[{\bf Proof of Theorem 4.1}]
	We complete the proof by proving ${\rm(i)} \Rightarrow {\rm(iii)}$, ${\rm(iii)}\Rightarrow {\rm(ii)}$ and ${\rm(ii)} \Rightarrow {\rm(i)}$.
	
	\textbf{Step 1}: ${\rm(ii)} \Rightarrow {\rm(i)}$.
	
	By ${\rm(ii)}$, all 1-transitive blowups of $H$ are CR tournaments, then $H$ is a strong CR tournament by Proposition \ref{strongCR1}.
	
	\textbf{Step 2}: ${\rm(iii)} \Rightarrow {\rm(ii)}$.
	
	Let $H^{*}$ be a transitive blowup of $H$. Then $H^{*} \in \xi(H) \cap (\mathcal{D}_{k} \backslash \mathcal{D}_{k-2})$ by {\rm (iii)}. Since $H$ is a basic tournament, we have $|V(H)| \geq 4$ and thus $|V(H^{*})| \geq 4$. If $H^{*}$ is a diamond, then $H^{*}$ is a CR tournament by Definition \ref{defCR}. Now we assume that $|V(H^{*})| \geq 4$ and $H^{*}$ is not a diamond, and we will show that $H^{*}$ is a CR tournament.
	
	Let $u$ be a non-CR vertex for $H^{*}$ with a non-CR dominating relation $\sigma$. Then $H^{*}(u,\sigma) \in \xi(H)$ by $H^{*} \in \xi(H)$, and $H^{*}(u,\sigma)$ can not be switching equivalent to a transitive blowup of $H$ by Proposition \ref{lem4maintheorem}, which implies that $H^{*}(u,\sigma) \notin \xi(H) \cap (\mathcal{D}_{k} \backslash \mathcal{D}_{k-2})$ by {\rm (iii)}. Therefore $H^{*}(u,\sigma) \notin \mathcal{D}_{k} \backslash \mathcal{D}_{k-2}$ by $H^{*}(u,\sigma) \in \xi(H)$, and thus $H^{*}(u,\sigma) \notin \mathcal{D}_{k}$ since $H^{*}$ is a subtournament of $H^{*}(u,\sigma)$ and $H^{*} \in \mathcal{D}_{k} \backslash \mathcal{D}_{k-2}$, which implies $H^{*}$ is a CR tournament by Definition \ref{defCR}. Thus {\rm (ii)} holds.
	
	\textbf{Step 3}: ${\rm(i)} \Rightarrow {\rm(iii)}$.
	
	Let $T$ be switching equivalent to a transitive blowup of $H$. Then $T \in \mathcal{D}_{k} \backslash \mathcal{D}_{k-2}$ by $H \in \mathcal{D}_{k} \backslash \mathcal{D}_{k-2}$, Corollaries \ref{crllblowupclass} and \ref{switchDk}, and $T \in \xi(H)$. Thus $T \in \xi(H) \cap (\mathcal{D}_{k} \backslash \mathcal{D}_{k-2})$. 
	
	Conversely, let $T$ be a tournament such that $T \in \xi(H) \cap (\mathcal{D}_{k} \backslash \mathcal{D}_{k-2})$. Now we show that $T$ is switching equivalent to a transitive blowup of $H$.
	
	Since $T \in \xi(H)$, there exists a subset $X \subseteq V(T)$ and a switch $T_1$ of $T$ such that $T_1[X]$ is isomorphic to $H$. If we can prove that $T_1$ is switching equivalent to a transitive blowup of $H$, then $T$ is switching equivalent to a transitive blowup of $H$. Hence, without loss of generality, we can assume that $T[X]$ is isomorphic to $H$.
	
	Assume that $V(T)=\{v_1,v_2,\ldots,v_n\}$ and $X=\{v_1,v_2,\ldots,v_m\}$. Since $H$ is a basic tournament, we have $n \geq m \geq 4$. If $n=m$, then it is trivial that $T$ is switching equivalent to a transitive blowup of $H$. Now we consider $n>m$.
	
	Since $H$ is a basic strong CR tournament and $T[X]$ is isomorphic to $H$, we have $T[X]\in \mathcal{D}_k \backslash \mathcal{D}_{k-2}$ and $T[X]$ is also a basic strong CR tournament by Theorem \ref{thforswiso}. For any $i>m$, $T[X\cup \{v_i\}]$ is a tournament generated by $T[X]$ and $v_i$, denoted by $T[X](v_i)$, and $T[X](v_i) \in \mathcal{D}_{k} \backslash \mathcal{D}_{k-2}$ by $T \in \mathcal{D}_{k} \backslash \mathcal{D}_{k-2}$. Then all $v_i$ ($i>m$) are CR vertices for $T[X]$ since $T[X]$ is a CR tournament, $T[X]\in \mathcal{D}_k\backslash \mathcal{D}_{k-2}$ and $T[X](v_i)\in \mathcal{D}_k\backslash \mathcal{D}_{k-2}$.
	
	Let $1\leq j\leq m$, $Y^{(j)}_{co}=\{v_k\hspace{0.15cm}|\hspace{0.15cm} \text{$v_k$ and $v_j$ are covertices in $T[X](v_k)$, $m< k\leq n$}\}$, $Y^{(j)}_{re}=\{v_k\hspace{0.15cm}|\hspace{0.15cm} \text{$v_k$ and $v_j$ are revertices in $T[X](v_k)$, $m< k\leq n$}\}$, $Y^{(j)}=Y^{(j)}_{co}\cup Y^{(j)}_{re}\cup \{v_j\}$. Then $Y^{(1)}\cup Y^{(2)}\cup \cdots \cup Y^{(m)}=V(T)$. Moreover, $Y^{(i)} \cap Y^{(j)}=\emptyset$ for $i \neq j$ by Lemma \ref{lem3maintheorem}. Hence $\{Y^{(1)}, \ldots, Y^{(m)}\}$ is a partition of $V(T)$.
	
	Let $W=Y^{(1)}_{re}\cup Y^{(1)}_{re}\cup Y^{(2)}_{re}\cup \cdots \cup Y^{(m)}_{re}$. Then $T$ is switching equivalent to $T'$ with respect to $W$ such that for $j\in \{1,2,\ldots,m\}$, $v_k$ and $v_j$ are covertices in $T'[X](v_k)$ for each $v_k\in Y^{(j)}\backslash \{v_j\}$. It is clear that $T'[X]=T[X]$, and thus $T'[X]$ is a basic strong CR tournament and $T'[X]\in \mathcal{D}_{k} \backslash \mathcal{D}_{k-2}$. Moreover, $T'\in \mathcal{D}_k\backslash \mathcal{D}_{k-2}$ by Corollary \ref{switchDk} and $T\in \mathcal{D}_{k}\backslash \mathcal{D}_{k-2}$.
	
	Take $u_1\in Y^{(i)}\backslash \{v_i\}$ and $u_2\in Y^{(j)}\backslash \{v_j\}$, where $i\in \{1,\ldots,m\}$ and $j\in \{1,\ldots,m\}\backslash \{i\}$. Now we prove that $\theta_{T'}(u_1,u_2)\cdot \theta_{T'}(v_i,v_j)=1$.
	
	Since $u_1$ and $v_i$ are covertices in $T'[X](u_1)$, $u_2$ and $v_j$ are covertices in $T'[X](u_2)$, we have $T'[X \cup \{u_1\}]$ is a 1-transitive blowup of $T'[X]$ and $T'[X \cup \{u_1\}]\in \mathcal{D}_{k} \backslash \mathcal{D}_{k-2}$ by Corollary \ref{crllblowupclass}. If $\theta_{T'}(u_1,u_2)\cdot \theta_{T'}(v_i,v_j)=-1$, then by Lemma \ref{lem1maintheorem}, we have $T'[X \cup \{u_1,u_2\}]$ is a basic tournament, which implies $u_2$ is a non-CR vertex for $T'[X \cup \{u_1\}]$. However, since $T'[X \cup \{u_1\}]$ is a CR tournament by the facts that $T'[X \cup \{u_1\}]$ is a 1-transitive blowup of $T'[X]$ and $T'[X]$ is a basic strong CR tournament ($T[X]=T'[X]$ is isomorphic to $H$), we have $T'[X \cup \{u_1,u_2\}]\notin \mathcal{D}_k$, which contradicts that $T'\in \mathcal{D}_k\backslash \mathcal{D}_{k-2}$. Thus $\theta_{T'}(u_1,u_2)\cdot \theta_{T'}(v_i,v_j)=1$, and $Y^{(i)}\rightarrow Y^{(j)}$ if $v_i\rightarrow v_j$ for $1\leq i,j\leq m$. Therefore, $T'$ is a blowup of $H$ such that $T'=T'[X](T'[Y^{(1)}],T'[Y^{(2)}],\ldots,T'[Y^{(m)}])=H(T'[Y^{(1)}],T'[Y^{(2)}],\ldots,T'[Y^{(m)}])$.
	
	If there exists $j\in \{1,2,\ldots,m\}$ such that $T'[Y^{(j)}]$ is not transitive, then there exists a 3-cycle in $T'[Y^{(j)}]$. Assume that $\{w_1,w_2,w_3\}\subset Y^{(j)}$ and $T'[\{w_1,w_2,w_3\}]$ is a 3-cycle such that $w_1\rightarrow w_2$, $w_2\rightarrow w_3$, $w_3\rightarrow w_1$. Then $T'[\{w_1\}\cup (\{v_1,v_2,\ldots,v_m\}\backslash\{v_j\})]$ is isomorphic to $H$, $T'[\{w_1,w_2,w_3\}\cup (\{v_1,v_2,\ldots,v_m\}\backslash\{v_j\})]$ is a blowup of $H$ with respect to $T'[\{v_1\}],T'[\{v_2\}],\ldots,T'[\{v_{j-1}\}],T'[\{w_1,w_2,w_3\}],T'[\{v_{j+1}\}],\ldots,T'[\{v_m\}]$. By using Lemma \ref{lem2maintheorem}, $T'[\{w_1,w_2,w_3\}\cup (\{v_1,v_2,\ldots,v_m\}\backslash\{v_j\})]$ is a basic tournament, which implies $w_3$ is a non-CR vertex for $T'[\{w_1,w_2\}\cup (\{v_1,v_2,\ldots,v_m\}\backslash\{v_j\})]$. Notice that $T'[\{w_1,w_2\}\cup (\{v_1,v_2,\ldots,v_m\}\backslash\{v_j\})]$ is a 1-transitive blowup of $H$, thus $T'[\{w_1,w_2\}\cup (\{v_1,v_2,\ldots,v_m\}\backslash\{v_j\})]$ is a CR tournament by the fact that $H$ is a strong CR tournament. Then we have $T'[\{w_1,w_2,w_3\}\cup (\{v_1,v_2,\ldots,v_m\}\backslash\{v_j\})]\notin \mathcal{D}_k$ by $T'[\{w_1,w_2\}\cup (\{v_1,v_2,\ldots,v_m\}\backslash\{v_j\})] \in \mathcal{D}_{k} \backslash \mathcal{D}_{k-2}$, which contradicts that $T'\in \mathcal{D}_k\backslash \mathcal{D}_{k-2}$.
	
	Now we have all $T'[Y^{(j)}]$ ($j=1,2,\ldots,m$) are transitive, which implies $T'$ is a transitive blowup of $H$, and thus $T$ is switching equivalent to a transitive blowup of $H$. We complete the proof.
\end{proof}

Now we show how to get the characterizations of $\mathcal{D}_3\backslash \mathcal{D}_1$ and $\mathcal{D}_5 \backslash \mathcal{D}_3$ presented in Theorem \ref{D5character} by using Theorem \ref{maintheorembasic}. By Proposition \ref{L246}, $L_4$ and $L_6$ are basic strong CR tournaments.

\begin{proposition}\label{containsL4}
	Let $T \in \mathcal{D}_3 \backslash \mathcal{D}_{1}$. Then $T \in \xi(L_4)$.
\end{proposition}

\begin{proof}
	Firstly, $T$ contains a diamond by $T\notin \mathcal{D}_{1}$ and {\rm (iii)} of Theorem \ref{D1}.
	
	By the definition of diamonds, we know there are two distinct diamonds and they are switching isomorphic. For the two distinct  diamonds, one is $L_4$, and the other is switching isomorphic to $L_4$. Therefore, $T$ contains a subtournament $H$ which is $L_4$ or is switching isomorphic to $L_4$, say, $T \in \xi(L_4)$.
\end{proof}

By Theorem \ref{Lntheorem1}, we have $L_4 \in \mathcal{D}_3 \backslash \mathcal{D}_1$. Then {\rm (ii)} of Theorem \ref{D5character} holds by the fact that $L_4$ is a basic strong CR tournament, Theorem \ref{maintheorembasic} and Proposition \ref{containsL4}.

\begin{lemma}{\rm(\!\!\cite{Dfive})}\label{T6L6}
	A $6$-tournament $T$ is switching isomorphic to $L_6$ if and only if $\det(T)=25$. 
\end{lemma}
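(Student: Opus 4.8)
The plan is to prove the two implications separately, reducing each to results already in hand.

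\emph{The forward direction} ($T$ switching isomorphic to $L_6$ $\Rightarrow$ $\det(T)=25$) is immediate: the determinant of a tournament is unchanged by relabelling vertices, and by Lemma \ref{Minors} applied with $U=V(T)$ (equivalently, by the $\{\pm1\}$-diagonal similarity of the skew-adjacency matrices of switching-equivalent tournaments) it is unchanged by switching; hence $\det(T)=\det(L_6)=(6-1)^2=25$ by Theorem \ref{Lntheorem1}.

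\emph{The converse} ($\det(T)=25$ $\Rightarrow$ $T$ switching isomorphic to $L_6$) is where the small amount of work lies. First I would check that $T\in\mathcal{D}_5$: every subtournament of the $6$-tournament $T$ has determinant at most $25$, since a subtournament of odd order has determinant $0$ (Fisher--Ryan), a $2$-subtournament has determinant $1$, a $4$-subtournament has determinant $1$ or $9$ (its Pfaffian is a signed sum of three $\pm1$ terms, so it lies in $\{\pm1,\pm3\}$), and $T$ itself has determinant $25$. On the other hand $\det(T)=25>9=3^2$ forces $T\notin\mathcal{D}_3$. Therefore $T\in\mathcal{D}_5\backslash\mathcal{D}_3$. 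Now apply Theorem \ref{D5character}(iv): $T$ is switching equivalent to a transitive blowup $L_6(a_1,\ldots,a_6)$ of $L_6$. Because $|V(T)|=6=|V(L_6)|$ and the $a_i$ are positive integers with $a_1+\cdots+a_6=6$, we get $a_i=1$ for every $i$, so the blowup is $L_6$ itself. Hence $T$ is switching equivalent to $L_6$, and in particular switching isomorphic to $L_6$, completing the proof.

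I do not anticipate a genuine obstacle: the statement falls out of Theorem \ref{Lntheorem1} together with the characterization of $\mathcal{D}_5\backslash\mathcal{D}_3$ in Theorem \ref{D5character}, and the only steps needing a sentence of justification are the invariance of the determinant under switching isomorphism and the elementary bound on the determinants of $4$-subtournaments. If one preferred a proof independent of Theorem \ref{D5character}, an alternative route would be a direct case analysis over the (few) $6$-tournaments up to switching isomorphism, identifying exactly those of determinant $25$; this is feasible but considerably less economical.
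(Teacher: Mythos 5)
The paper does not actually prove this lemma: it is imported verbatim from \cite{Dfive} (note the citation attached to the statement), so there is no in-paper argument to compare yours against. Your forward direction is fine, and your reduction of the converse is formally valid given the results the paper quotes: the bound $\det \in \{1,9\}$ for $4$-subtournaments via the three-term Pfaffian, Fisher--Ryan for odd orders, and the count $a_1+\cdots+a_6=6$ forcing the trivial blowup are all correct, so $T\in\mathcal{D}_5\backslash\mathcal{D}_3$ and Theorem \ref{D5character}(iv) deliver the conclusion. The one thing you should be aware of is the direction of logical dependency. Within this paper, Lemma \ref{T6L6} is used in Proposition \ref{containsL6} precisely in order to \emph{re-derive} Theorem \ref{D5character}(iv) from the new machinery (Theorem \ref{maintheorembasic}), and in the source \cite{Dfive} the lemma is a low-level ingredient feeding into the characterization of $\mathcal{D}_5\backslash\mathcal{D}_3$, not a consequence of it. So while your deduction is not circular as a citation of an already-published theorem, it inverts the natural order and could not serve as a proof inside either paper's development; the ``considerably less economical'' alternative you mention --- a direct check over the finitely many switching-isomorphism classes of $6$-tournaments, identifying those of determinant $25$ --- is in fact the substantive proof, and is what you would need if the lemma were to stand on its own.
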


\begin{proposition}\label{containsL6}
	Let $T \in \mathcal{D}_{5} \backslash \mathcal{D}_{3}$. Then $T \in \xi(L_{6})$.
\end{proposition}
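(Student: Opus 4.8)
The plan is to show that any $T\in\mathcal{D}_5\backslash\mathcal{D}_3$ contains a subtournament on $6$ vertices whose determinant is $25$, and then invoke Lemma~\ref{T6L6} to conclude that this subtournament is switching isomorphic to $L_6$, i.e.\ $T\in\xi(L_6)$. The starting observation is that $T\notin\mathcal{D}_3$ means some principal minor of $S_T$ exceeds $9$; equivalently, $T$ has a subtournament $T_0$ with $\det(T_0)>9$. Since $T\in\mathcal{D}_5$, every subtournament has determinant at most $25$, and by the Fisher--Ryan result (the determinant of an even-order tournament is an odd square) the only possible value strictly between $9$ and $25$ inclusive is $25$ itself. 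Hence $\det(T_0)=25$ and, again by Fisher--Ryan, $T_0$ has even order.

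First I would reduce to the case $|V(T_0)|=6$. If $|V(T_0)|\ge 8$, the goal is to pass to a smaller subtournament that still has determinant $25$. This is exactly the kind of ``there is always a witnessing $6$-subtournament'' statement that one expects from the theory of $\mathcal{D}_k$; indeed Theorem~\ref{Dthree}(iii) shows that membership in $\mathcal{D}_3$ is detected on $6$-subtournaments, which says precisely that if $T\notin\mathcal{D}_3$ then some $6$-subtournament is not in $\mathcal{D}_3$, i.e.\ has a principal minor exceeding $9$; combined with $T\in\mathcal{D}_5$ this forces that $6$-subtournament itself, or one of its subtournaments, to have determinant $25$. The cleanest route is: by Theorem~\ref{Dthree}(iii) applied to $T$ (which is not in $\mathcal{D}_3$), there is a $6$-subtournament $T_6$ of $T$ with $T_6\notin\mathcal{D}_3$; since $T_6$ is a subtournament of $T\in\mathcal{D}_5$ we have $T_6\in\mathcal{D}_5$, so $T_6\in\mathcal{D}_5\backslash\mathcal{D}_3$, and some principal minor of $S_{T_6}$ lies strictly between $9$ and $25$, hence equals $25$; as the largest principal minor is $\det(T_6)$ itself (it is the principal minor on all $6$ vertices, and it is at least any even-order proper principal minor can be only $1$ or $9$), we get $\det(T_6)=25$.

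Then I would apply Lemma~\ref{T6L6}: $\det(T_6)=25$ implies $T_6$ is switching isomorphic to $L_6$. Therefore $T$ contains a subtournament switching isomorphic to $L_6$, which is by definition $T\in\xi(L_6)$, completing the proof.

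The main obstacle is the argument that a $6$-subtournament not in $\mathcal{D}_3$ but in $\mathcal{D}_5$ must actually have determinant (not merely some smaller principal minor) equal to $25$. This needs the fact that among the principal minors of a $6\times 6$ skew-adjacency matrix, the only ones that can exceed $9$ are the $6\times 6$ minor (the full determinant) and possibly... none of the smaller ones, since a $4\times 4$ skew-adjacency principal minor is $0$, $1$, or $9$ and a $2\times 2$ one is $1$; so the only minor capable of taking the value $25$ is $\det(T_6)$. Spelling this out carefully — i.e.\ recalling that every proper even-order subtournament of a tournament in $\mathcal{D}_5$ has determinant in $\{0,1,9\}$ by Fisher--Ryan together with $\mathcal{D}_5\supseteq$ those subtournaments — is the one point that requires care, but it is routine given the background results already cited.
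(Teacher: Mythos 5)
Your proposal is correct and follows essentially the same route as the paper: apply Theorem \ref{Dthree}(iii) to obtain a $6$-subtournament $H \notin \mathcal{D}_3$, observe that the only principal minor of a $6$-tournament capable of exceeding $9$ is the full determinant (odd-order subtournaments have determinant $0$ and $4$-subtournaments have determinant at most $9$), conclude $\det(H)=25$ from $T\in\mathcal{D}_5$ and Fisher--Ryan, and finish with Lemma \ref{T6L6}. The paper states the step $\det(H)>9$ slightly more tersely, but the argument is the same.
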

\begin{proof}
	 By {\rm (iii)} of Theorem \ref{Dthree}, $T$ contains a $6$-subtournament $H$ such that $\det(H) > 9$. Since $T \in \mathcal{D}_{5} \backslash \mathcal{D}_{3}$ and $\det(H)$ is equal to the square of an odd integer, we have $\det(H)=25$. By Lemma \ref{T6L6}, $H$ is switching isomorphic to $L_6$. It follows that $T \in \xi(L_{6})$.
\end{proof}

By Theorem \ref{Lntheorem1}, we have $L_6 \in \mathcal{D}_5 \backslash \mathcal{D}_3$. Then {\rm (iv)} of Theorem \ref{D5character} holds by the fact that $L_6$ is a basic strong CR tournament, Theorem \ref{maintheorembasic} and Proposition \ref{containsL6}.

\begin{remark}
	When $k \geq 7$, $T \in \xi(L_{k+1})$ does not necessarily hold for a tournament $T \in \mathcal{D}_{k} \backslash \mathcal{D}_{k-2}$. For example, there is a $6$-tournament $T'$ with the skew-adjacency matrix \eqref{exampleT} such that $T' \in \mathcal{D}_{7} \backslash \mathcal{D}_{5}$,
	but $T' \notin \xi(L_8)$ since $V(T') < V(L_8)$.
\end{remark}

\section{All $L_n$ are strong CR tournaments}\label{sec-allLn}
\hspace{1.5em}To solve Question \ref{queanyDktwo}, we need to further study the properties of $L_n$ for even $n$. In this section, we show the following results.

\begin{theorem}\label{evenLnbasicCR}
	Let $n\geq 4$ be a positive even integer. Then $L_n$ is a basic strong CR tournament.
\end{theorem}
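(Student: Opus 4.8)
The plan is to prove $L_n$ is a basic strong CR tournament by establishing two separate facts: that $L_n$ is a basic tournament, and that $L_n$ is a strong CR tournament; combining them gives the claim. The first fact should be relatively light: using Proposition~\ref{jugcoreone} together with the explicit structure of $L_n$ (the vertices $v_1,\ldots,v_{n-1}$ form a transitive tournament and $v_n$ has a strictly alternating dominating pattern against them), one checks directly that no pair of vertices can be covertices or revertices. For a pair $v_i,v_j$ with $i,j\le n-1$, the value $\theta(v_i,v_k)\theta(v_j,v_k)$ changes as $v_k$ ranges over the transitive chain, so they are not CR-associated; for a pair involving $v_n$, the alternating pattern again breaks constancy. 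Since this is finite casework governed by the transitive-plus-alternating structure, I expect it to go through cleanly, and for $n=4,6$ it is already recorded in Proposition~\ref{L246}.

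The heart of the matter is showing $L_n$ is a strong CR tournament, and by Proposition~\ref{strongCR1} it suffices to show that every $1$-transitive blowup of $L_n$ is a CR tournament. So fix a $1$-transitive blowup $\widehat{L_n}$ (of order $n+1$), let $u$ be a non-CR vertex for $\widehat{L_n}$ with a non-CR dominating relation $\sigma$, and let $T=\widehat{L_n}(u,\sigma)$, a tournament of order $n+2$. By Theorem~\ref{Lntheorem1} we have $L_n\in\mathcal{D}_{n-1}\backslash\mathcal{D}_{n-3}$, so by Corollary~\ref{crllblowupclass} also $\widehat{L_n}\in\mathcal{D}_{n-1}\backslash\mathcal{D}_{n-3}$, and we must prove $T\notin\mathcal{D}_{n-1}$, i.e. exhibit a subtournament (necessarily containing $u$) whose determinant exceeds $(n-1)^2$. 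Equivalently, writing $S_T$ for the skew-adjacency matrix with a convenient vertex ordering, we must find a principal submatrix of $S_T$ with determinant $>(n-1)^2$; since every even principal minor is an odd square, it suffices to produce one equal to $(n+1)^2$.

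The approach here is the "specialized technique" the paper announces: organize the analysis around the combinatorial data of $\sigma$ relative to the transitive chain of $\widehat{L_n}$. After possibly switching at $\{u\}$ and relabelling, reduce to a normal form for how $u$ meets the chain $v_1\to\cdots\to v_{n-1}$ and the apex-type vertex $v_n$ (and the one doubled vertex of the blowup). The condition that $u$ is non-CR means $\sigma$ avoids the finitely many "CR patterns" (constant or alternating-matching patterns from the proof of Proposition~\ref{nonCRvertex}), so $\sigma$ must contain a "defect" — a place where it fails to be constant on a block where $L_n$'s structure is constant, or fails to alternate where $L_n$ alternates. The plan is to introduce the class of matrices the paper calls $Z$-matrices: skew-symmetric $\{0,\pm1\}$ matrices built from the $S_{L_n}$-type pattern with one extra row/column encoding $\sigma$, and to show by a determinant/Pfaffian computation (cofactor expansion along the $u$-row, or row reduction exploiting the alternating pattern) that whenever $\sigma$ has such a defect, some $(n+1)\times(n+1)$ or $(n-1)\times(n-1)$-plus-$u$ principal submatrix has determinant $\ge (n+1)^2$. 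One then checks the handful of possible defect locations (defect inside the transitive part, defect at the apex coordinate, defect at the doubled coordinate) and in each case names the explicit vertex subset realizing the large minor — likely a copy of $L_{n}$ or $L_{n+2}$-like configuration sitting inside $T$.

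\textbf{Main obstacle.} The hard part will be the $Z$-matrix determinant computation: controlling a family of $(n+1)$- or $(n+2)$-dimensional skew-symmetric matrices uniformly in $n$ and showing the relevant minor jumps from $(n-1)^2$ to at least $(n+1)^2$ exactly when the inserted row fails the CR patterns. This requires either a clean Pfaffian recursion for the $L_n$-pattern (peeling off $v_{n-1}$ and $v_n$ reduces $L_n$ to $L_{n-2}$, which should give $\det$ recursively $(n-1)^2$) extended to track the perturbing $u$-row, or a careful row/column elimination that isolates the contribution of the defect. Managing the bookkeeping — which coordinates of $\sigma$ are free, which are forced, and how switching normalizes them — while keeping the argument valid for all even $n\ge 4$ simultaneously, is where the real work and the risk of case explosion lie; this is presumably why the paper flags Subsection~\ref{subsecLn-pfeven} as its most technical part.
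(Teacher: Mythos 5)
Your overall skeleton is sound and matches the paper's at the top level: basicness is indeed a direct check against Proposition \ref{jugcoreone} (this is the paper's Lemma \ref{Lnbasic}), and strongness does reduce to a CR statement. However, the paper does not attack the order-$(n+1)$ blowups directly as you propose; it first proves that $L_n$ itself is a CR tournament (so the extension $L_n(u,\sigma)$ has order only $n+1$), and then Lemma \ref{Lnstrong} bootstraps from ``$L_n$ is CR'' to ``$L_n$ is strong CR'' by switching each $1$-transitive blowup into a handful of normal forms that reduce back to $L_n$ or $L_{n+1}$. Your route through Proposition \ref{strongCR1} is legitimate in principle, but it forces you to carry the doubled vertex of the blowup through the entire determinant analysis, which your sketch never addresses.

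The genuine gap is that the core quantitative step is only gestured at. You need, for every non-CR $\sigma$, an explicit subtournament of determinant exceeding $(n-1)^2$, uniformly in $n$, and your plan of ``find the defect, expand along the $u$-row'' does not survive contact with the actual case structure. The paper's mechanism is quite different from what you guess: the $Z$-matrix is not a skew-symmetric $\{0,\pm 1\}$ matrix but an $m\times(m-1)$ integer matrix (with $m=n-1$) whose $i$-th row sum $b_i$ satisfies $\det(L_n(u,\sigma,v_i))=(a+b_i)^2$ via a Schur-complement formula (Proposition \ref{toolforS} and Corollary \ref{crxone}); the argument then shows the increments $b_{i+1}-b_i$ take only the values $\Delta$ or $\Delta\pm 2m$ and hunts for two indices with $|b_i-b_j|\ge 2m+2$. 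Crucially, this single-vertex-deletion strategy \emph{fails} in some cases — e.g.\ when $\sigma$ splits into $t$ equal odd blocks, all the $b_i$ stay within range — and the paper must there exhibit a completely different subtournament (Subcase 1.2) and verify its determinant separately, including a hand computation for $m=9$. Your proposal neither produces the determinant formula nor anticipates that the ``defect'' heuristic has exceptional configurations requiring ad hoc constructions, so as written it is a plan with the decisive step missing rather than a proof.
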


\begin{theorem}\label{allLnstrongCR}
	All $L_n$ are strong CR tournaments.
\end{theorem}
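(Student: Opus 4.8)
\textbf{Proof proposal for Theorem \ref{allLnstrongCR}.}

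The plan is to reduce the odd case to the even case, which is handled by Theorem \ref{evenLnbasicCR}. First I would recall that $L_2$ is already known to be a strong CR tournament by Proposition \ref{L246}, so it suffices to treat $L_n$ for odd $n \geq 3$. By Proposition \ref{strongCR1}, it is enough to show that every $1$-transitive blowup of $L_n$ is a CR tournament; equivalently, by Definition \ref{defstrongCR} combined with Proposition \ref{strongCR1}, I must first verify that $L_n$ itself is a CR tournament, and then that each of its $1$-transitive blowups is too.

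The key structural observation I would exploit is the relationship between $L_n$ for odd $n$ and $L_{n+1}$ for even $n+1$. From the definition of $L_n$, the vertex $v_n$ (the ``last'' vertex, dominating or dominated by each $v_i$ according to the parity of $i$) plays the role of a CR-type vertex when one passes from $L_{n-1}$ to $L_n$: indeed, I expect that for odd $n$, $L_n$ is switching equivalent to (or is) a $1$-transitive blowup of $L_{n-1}$, since the transitive part $v_1 \to v_2 \to \cdots \to v_{n-1}$ has an even number $n-1$ of vertices and two consecutive vertices $v_{n-2}, v_{n-1}$ should be CR-associated once $v_n$ is attached with the prescribed alternating relation (one can check that $v_n$ relates to $v_{n-2}$ and $v_{n-1}$ with opposite signs, matching the structure needed for CR-association in the relevant induced subtournament). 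So I would prove a lemma of the form: for odd $n \geq 3$, $L_n$ is switching equivalent to a $1$-transitive blowup of $L_{n-1}$, and more generally every $1$-transitive blowup of $L_n$ is switching equivalent to a transitive blowup of $L_{n-1}$. Granting this, since $n-1$ is even, Theorem \ref{evenLnbasicCR} says $L_{n-1}$ is a basic strong CR tournament, so by the equivalence ${\rm(i)} \Leftrightarrow {\rm(ii)}$ in Theorem \ref{maintheorembasic}, all transitive blowups of $L_{n-1}$ are CR tournaments; hence every $1$-transitive blowup of $L_n$, being switching equivalent to such a blowup, is a CR tournament by Theorem \ref{switchCRtournament}. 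Then Proposition \ref{strongCR1} yields that $L_n$ is a strong CR tournament.

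The main obstacle I anticipate is establishing the structural lemma rigorously, i.e.\ pinning down exactly which two vertices of $L_n$ (odd $n$) are CR-associated and verifying that collapsing them produces $L_{n-1}$ up to switching. This requires a careful bookkeeping of the alternating dominating relations in the definition of $L_n$ and a correct choice of the switching set $W$; the parity of indices is delicate, and I would need to check the covertex/revertex condition $\theta_T(u_1,v)\theta_T(u_2,v)=\pm 1$ from Proposition \ref{jugcoreone} for all remaining vertices $v$. A secondary point is to confirm that a $1$-transitive blowup of $L_n$ (rather than $L_n$ itself) still enjoys this reduction — here I would combine the CR-association found inside $L_n$ with Corollary \ref{crlllemCRtotran} and Lemma \ref{lemtranblowswitcheq} to express the blowup, after a switch, as a transitive blowup of $L_{n-1}$. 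Once the lemma is in place, the rest is a short chain of citations to Theorems \ref{evenLnbasicCR}, \ref{maintheorembasic}, \ref{switchCRtournament} and Proposition \ref{strongCR1}, together with the already-established base case $L_2$ from Proposition \ref{L246}.
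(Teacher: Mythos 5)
Your overall strategy for odd $n\geq 5$ is essentially the paper's: the structural fact you propose to prove (for odd $n$, $L_n$ is switching equivalent to a $1$-transitive blowup of $L_{n-1}$) is already Lemma \ref{Lnadd1switch}, and the paper then chains Lemma \ref{lemtranblowswitcheq}, Theorem \ref{evenLnbasicCR}, Theorem \ref{maintheorembasic} and Theorem \ref{switchCRtournament} into Proposition \ref{strongCR1} exactly as you describe. Two issues remain, one genuine and one minor.

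The genuine gap is the case $n=3$. Your reduction sends $L_3$ down to $L_2$, but neither of the theorems you invoke applies there: Theorem \ref{evenLnbasicCR} is stated only for even $n\geq 4$, and Theorem \ref{maintheorembasic} requires $H$ to be a \emph{basic} tournament in $\mathcal{D}_k\backslash\mathcal{D}_{k-2}$ with $k\geq 3$, whereas $L_2$ has order $2$ (so it cannot be basic by Definition \ref{defbasic}) and lies in $\mathcal{D}_1$. So the step ``all transitive blowups of $L_{n-1}$ are CR tournaments'' has no justification when $n-1=2$; saying that $L_2$ is a strong CR tournament only gives you control over its $1$-transitive blowups, not over all transitive blowups. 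The paper closes this case separately: a $1$-transitive blowup of $L_3$ is a $4$-tournament in $\mathcal{D}_1$, hence switching equivalent to a transitive $4$-tournament by Theorem \ref{D1}, and a transitive $4$-tournament is verified directly to be a CR tournament; Theorem \ref{switchCRtournament} and Proposition \ref{strongCR1} then finish $L_3$. You need some such ad hoc argument for $n=3$.

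A minor point: your tentative identification of the CR-associated pair in $L_n$ (odd $n$) as $v_{n-2},v_{n-1}$ is incorrect. For $v\in\{v_1,\ldots,v_{n-3}\}$ one has $\theta(v_{n-2},v)\cdot\theta(v_{n-1},v)=1$, but $\theta(v_{n-2},v_n)\cdot\theta(v_{n-1},v_n)=-1$ since $n-2$ is odd and $n-1$ is even, so this pair is neither covertices nor revertices. The correct pair is $v_1$ and $v_{n-1}$, which are revertices in $L_n$; switching with respect to $\{v_{n-1}\}$ exhibits $L_n$ as a $1$-transitive blowup of $L_{n-1}$, as in the proof of Lemma \ref{Lnadd1switch}. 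Since you flagged this bookkeeping as something to verify, and the needed statement is already available as a lemma, this does not affect the viability of your approach.
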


The remainder of this section is organized as follows. In Subsection \ref{subsecLn-nota}, some necessary notations and lemmas are given. In Subsection \ref{subsecLn-tools}, we present some conclusions regarding the determinant of skew-symmetric matrix, which will serve as tools in the proof of Theorem \ref{evenLnbasicCR}. In Subsection \ref{subsecLn-table}, we introduce $Z$-matrix, which is our key technique for the proof of Theorem \ref{evenLnbasicCR}. In Subsection \ref{subsecLn-pfeven}, we prove Theorem \ref{evenLnbasicCR} and Theorem \ref{allLnstrongCR}.

\subsection{Notations and lemmas}\label{subsecLn-nota}
\hspace{1.5em}To begin with, we introduce an important notation.

\begin{definition}{\rm(\!\!\cite{Dfive})}\label{defpsi}
	Let $T$ be an $n$-tournament, $X$ be a subset of $V(T)$ such that $T[X]$ is transitive and $|X|=k$. For any $u\in V(T)\backslash X$ and the ordering of $X$, $\{v_1,\ldots,v_k\}$, which satisfies $v_1\rightarrow v_2\rightarrow \cdots \rightarrow v_k$ in $T$, we define the dominating relation between $u$ and $X$ by $\psi_T(u,X)=(\alpha_1,\ldots,\alpha_t)$, where nonzero integers $\alpha_1,\ldots,\alpha_t$ and a partition $X(i,\alpha_i)(i=1,\ldots,t)$ of $X$ satisfy that $|\alpha_1|+\cdots+|\alpha_t|=k$, $\alpha_i\alpha_{i+1}<0$ for $1\leq i\leq t-1$, $X(1,\alpha_1)=\{v_1,\ldots,v_{|\alpha_1|}\}$, $X(j  ,\alpha_j)=\{v_{|\alpha_1|+\cdots+|\alpha_{j-1}|+1},\ldots,v_{|\alpha_1|+\cdots+|\alpha_j|}\}$ for $2\leq j\leq t$, and the arcs between $u$ and $X$ satisfy that $\{u\} \rightarrow X(i,\alpha_i)$ if $\alpha_i>0$, and $\{u\}\leftarrow X(i,\alpha_i)$ if $\alpha_i<0$.
\end{definition}

\begin{remark}
	We note that the notation $\psi_{T}(u,X)=(\alpha_1,\ldots,\alpha_t)$ represent the dominating relation between $u$ and the vertices in $X$ ordered by transitivity. For example, if $X=\{v_1,v_2,v_3,v_4\}$ and $v_3 \rightarrow v_4 \rightarrow v_1 \rightarrow v_2$ in $T$, then $\psi_T(u,X)=(1,-2,1)$ implies that $u \rightarrow v_3$, $u \leftarrow v_4$, $u \leftarrow v_1$, $u \rightarrow v_2$.
\end{remark}

For the sake of clarity and consistency in the subsequent discussion, throughout the remainder of Section \ref{sec-allLn}, we shall denote the vertex set of $L_n$ by $V(L_n)=\{v_1,v_2,\ldots,v_{n-1},v_n\}$, where $v_1,v_2,\ldots,v_{n-1},v_n$ satisfying that $L_n[\{v_1,v_2,\ldots,v_{n-1}\}]$ is transitive with $v_1\rightarrow v_2\rightarrow \cdots \rightarrow v_{n-1}$ and $\psi_{L_n}(v_n,\{v_1,v_2,\ldots,v_{n-1}\})=((-1)^0,(-1)^1,\ldots,(-1)^{n-2})$.

We use $L_n^{-}$ to denote the switch of $L_n$ with respect to $\{v_n\}$, consequently, we have $\psi_{L_n^{-}}(v_n,\{v_1,v_2,\ldots,v_{n-1}\})=((-1)^{1},(-1)^{2},\ldots,(-1)^{n-1})$.

For notational convenience, when no confusion arises, we abbreviate $T(u,\sigma)$ as $T(u)$ and simply say ``$u$ is a CR vertex (non-CR vertex) for $T$'' (omit the reference to $\sigma$) in the following.

For $n \geq 3$, we have the following lemma.

\begin{lemma}\label{CRveforLneven}
	Let $n \geq 3$ be a positive integer, $T \in \{L_n, L^{-}_{n}\}$, $T(u)$ be the tournament generated by $T$ and $u$ with some dominating relation and $\psi_{T(u)}(u,X)=(\alpha_1,\alpha_2,\ldots,\alpha_t)$, where $X=\{v_1,\ldots,v_{n-1}\}$. Then we have
	\item {\rm (i)} if $n$ is even, then $u$ is a CR vertex for $T$ if and only if $t\in \{1,2,n-1\}$;
	\item {\rm (ii)} if $n$ is odd, then $u$ is a CR vertex for $T$ if and only if $t\in \{2,n-1\}$, or $t=1$ with $\alpha_1 \cdot \theta_{T(u)}(u,v_n)<0$.
\end{lemma}

\begin{proof}
	Firstly, we show {\rm (i)} holds. If $u$ is a CR vertex for $T$, then there exists $v_i\in \{v_1,\ldots,v_n\}$ such that $u$ and $v_i$ are CR-associated vertices in $T(u)$. Since $T \in \{L_n, L^{-}_{n}\}$, by a direct checking, we have
	\[
	t\in \begin{cases}
		\{1,2\}, & \text{if $i \in \{1,n-1\}$};\\
		\{2\}, & \text{if $i \in \{2,3,\ldots,n-2\}$};\\
		\{n-1\}, & \text{if $i \in \{n\}$}.
	\end{cases}
	\]
	Hence $t\in \{1,2,n-1\}$.
	
	Conversely, if $t\in \{1,2,n-1\}$, we have the following cases.
	
	\textbf{Case 1}: $t=1$.
	
	If $\alpha_1>0$ (resp. $\alpha_1<0$), then by the condition that $T \in \{L_n, L^{-}_{n}\}$, we have $u$ and $v_1$ are covertices (resp. revertices) in $L_n(u)$ if $u \leftarrow v_n$ (resp. $u \rightarrow v_n$), $u$ and $v_{n-1}$ are revertices (resp. covertices) in $L_n(u)$ if $u \rightarrow v_n$ (resp. $u \leftarrow v_n$); $u$ and $v_1$ are covertices (resp. revertices) in $L^{-}_n(u)$ if $u \rightarrow v_n$ (resp. $u \leftarrow v_n$, $u$), $u$ and $v_{n-1}$ are revertices (resp. covertices) in $L^{-}_n(u)$ if $u \leftarrow v_n$ (resp. $u \rightarrow v_n$).
	
	\textbf{Case 2}: $t=2$.
	
	Let $j=|\alpha_1|$. Then by the condition that $T \in \{L_n, L^{-}_{n}\}$, we have $j<n-1$ and $\theta_{T}(v_j,v_n)\cdot \theta_{T}(v_{j+1},v_n)=-1$. Let $k_1\in \{j,j+1\}$ such that $\theta_{T}(v_{k_1},v_n)\cdot \theta_{T}(u,v_n)=1$. Then $k_2\in \{j,j+1\}\backslash \{k_1\}$ satisfies that $\theta_{T}(v_{k_2},v_n)\cdot \theta_{T}(u,v_n)=-1$. If $\alpha_1>0$, then $u$ and $v_{k_2}$ are revertices in $T(u)$; if $\alpha_1<0$, then $u$ and $v_{k_1}$ are covertices in $T(u)$.
	
	\textbf{Case 3}: $t=n-1$.
	
	In this case, for $T=L_n$, we have $u$ and $v_n$ are covertices if $\alpha_1>0$, and revertices if $\alpha_1<0$; for $T=L^{-}_n$, we have $u$ and $v_n$ are covertices if $\alpha_1<0$, and revertices if $\alpha_1>0$.
	
	Combining the above cases, if $t\in \{1,2,n-1\}$, then $u$ is a CR vertex for $T$. Thus {\rm (i)} holds.
	
	Now we show that (ii) holds. We only consider the case $T=L_n$, and the proof for $T=L^{-}_n$ is similar, so we omit it.
	
	If $u$ is a CR vertex for $L_n$, then there exists $v_i\in \{v_1,\ldots,v_n\}$ such that $u$ and $v_i$ are CR-associated vertices in $L_n(u)$. If $i\in \{1,n-1\}$, then $t=2$, or $t=1$ with $\alpha_1 \cdot \theta_{L_n(u)}(u,v_n)<0$; if $i\in \{2,3,\ldots,n-2\}$, then $t=2$; if $i=n$, then $t=n-1$. Hence $t\in \{2,n-1\}$, or $t=1$ with $\alpha_1 \cdot \theta_{L_n(u)}(u,v_n)<0$.
	
	Conversely, if $t=1$ and $\alpha_1 \cdot \theta_{L_n(u)}(u,v_n)<0$, then $u$ and $v_1$ are covertices (or $u$ and $v_{n-1}$ are revertices) in $L_n(u)$ if $\alpha_1>0$, $u$ and $v_1$ are revertices (or $u$ and $v_{n-1}$ are covertices) in $L_n(u)$ if $\alpha_1<0$; if $t=2$ or $t=n-1$, then by the similar discussions in Cases 2 and 3, we have $u$ is a CR vertex for $L_n(u)$. Thus (ii) holds.
\end{proof}

\begin{lemma}\label{Lnadd1switch}
	Let $n\geq 2$ be a positive even integer. Then $L_{n+1}$ is switching equivalent to a $1$-transitive blowup of $L_n$.
\end{lemma}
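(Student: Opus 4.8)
The plan is to locate inside $L_{n+1}$ an explicit pair of revertices and then delete one of them. Throughout, label $L_{n+1}$ in the standard way of Section \ref{sec-allLn}: $V(L_{n+1})=\{v_1,\ldots,v_n,v_{n+1}\}$ with $v_1\rightarrow v_2\rightarrow\cdots\rightarrow v_n$ transitive and $\psi_{L_{n+1}}(v_{n+1},\{v_1,\ldots,v_n\})=((-1)^0,(-1)^1,\ldots,(-1)^{n-1})$, so that $v_{n+1}\rightarrow v_i$ precisely when $i$ is odd.

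First I would show that $v_1$ and $v_n$ are revertices in $L_{n+1}$. This is a short parity check against each remaining vertex: for $2\le i\le n-1$ one has $v_1\rightarrow v_i$ and $v_i\rightarrow v_n$, hence $\theta_{L_{n+1}}(v_1,v_i)=1=-\theta_{L_{n+1}}(v_n,v_i)$; and since $n$ is even, $v_{n+1}\rightarrow v_1$ (as $1$ is odd) while $v_n\rightarrow v_{n+1}$ (as $n$ is even), so $\theta_{L_{n+1}}(v_1,v_{n+1})=-1=-\theta_{L_{n+1}}(v_n,v_{n+1})$. Thus $\theta_{L_{n+1}}(v_1,w)=-\theta_{L_{n+1}}(v_n,w)$ for every $w\in V(L_{n+1})\setminus\{v_1,v_n\}$, which is exactly the definition of $v_1,v_n$ being revertices (when $n=2$ the middle range is empty and $L_3$ is a $3$-cycle, for which the claim is immediate). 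This is the only place the evenness of $n$ is used: for odd $n$ the arc between $v_n$ and $v_{n+1}$ flips and the pair is neither covertices nor revertices, so I would flag this step as the crux even though the verification itself is routine.

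Next, set $H:=L_{n+1}[\{v_1,\ldots,v_{n-1},v_{n+1}\}]=L_{n+1}-\{v_n\}$. Reading off the induced arcs, $H$ has transitive part $v_1\rightarrow\cdots\rightarrow v_{n-1}$ and $v_{n+1}\rightarrow v_i$ iff $i$ is odd for $1\le i\le n-1$; hence the relabelling $v_i\mapsto v_i$ $(i\le n-1)$, $v_{n+1}\mapsto v_n$ is an isomorphism $H\cong L_n$. Since $v_1$ and $v_n$ are CR-associated in $L_{n+1}=H(v_n,\sigma)$, where $\sigma$ is the dominating relation between $v_n$ and $V(H)$ induced by $L_{n+1}$, and $v_1\in V(H)$, the vertex $v_n$ is a CR vertex for $H$ with $\sigma$ by Definition \ref{defcrvertex}. (Alternatively, for $n\ge 4$ one may observe $\psi_{L_{n+1}}(v_n,\{v_1,\ldots,v_{n-1}\})=(-(n-1))$ consists of a single block, so Lemma \ref{CRveforLneven}(i) applies directly.) By Lemma \ref{lemCRtotransblowup} there is a switch $T^{\ast}$ of $L_{n+1}=H(v_n,\sigma)$ that is a $1$-transitive blowup of $H$, and since $H\cong L_n$ this $T^{\ast}$ is a $1$-transitive blowup of $L_n$. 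Therefore $L_{n+1}$ is switching equivalent to a $1$-transitive blowup of $L_n$, which finishes the argument (this also recovers, via Corollary \ref{crlllemCRtotran}, the fact that $L_{n+1}$ is not basic for even $n$).
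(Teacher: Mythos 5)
Your proof is correct and uses essentially the same construction as the paper: the paper simply switches $L_{n+1}$ with respect to $W=\{v_n\}$ and observes the result is $L_n(2,1,\ldots,1)$ with $v_1,v_n$ covertices, which is exactly the switch that Lemma \ref{lemCRtotransblowup} produces from your observation that $v_1$ and $v_n$ are revertices in $L_{n+1}$. Your version just makes explicit the parity check (where evenness of $n$ enters) and routes the final step through the general lemma instead of writing out the blowup directly.
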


\begin{proof}
	Let $W=\{v_n\}$. Then $L_{n+1}$ is switching equivalent to $L_n(2,1,\ldots,1)=L_n(\{v_n \rightarrow v_1\},\{v_2\},\{v_3\},\ldots,\{v_{n-1}\},\{v_{n+1}\})$ with respect to $W$, where $v_1$ and $v_n$ are covertices.
\end{proof}

Now we show that $L_n$ is a basic tournament for even $n\geq 4$.

\begin{lemma}\label{Lnbasic}
	Let $n\geq 3$. Then $L_n$ is a basic tournament if $n$ is even, and $L_n$ is not a basic tournament if $n$ is odd.
\end{lemma}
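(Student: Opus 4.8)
The plan is to handle the two parities by different means: for odd $n$ I would derive non‑basicness from an already available structural fact, while for even $n$ I would argue directly from the explicit arc structure of $L_n$ together with the criterion in Corollary \ref{jugcoretwo}. For the odd case, note that $n-1\geq 2$ is even, so Lemma \ref{Lnadd1switch} gives that $L_n$ is switching equivalent to a $1$‑transitive blowup $B$ of $L_{n-1}$. In $B$ the two vertices arising from the blown‑up vertex of $L_{n-1}$ have, by Definition \ref{DefBLWOUP}, the same dominating relation to every other vertex, hence they are covertices; so $B$ is not a basic tournament. Since being a basic tournament is invariant under switching (Theorem \ref{switchbasic}), $L_n$ is not basic.

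For the even case $n\geq 4$ we have $|V(L_n)|\geq 4$, so Definition \ref{defbasic} applies, and by Corollary \ref{jugcoretwo} it suffices to show that no pair $\{u_1,u_2\}\subset V(L_n)$ is CR‑associated, i.e. that $\theta_{L_n}(u_1,w)\,\theta_{L_n}(u_2,w)$ is not constant as $w$ ranges over $V(L_n)\setminus\{u_1,u_2\}$. I would first record the arc structure once: $v_1\to v_2\to\cdots\to v_{n-1}$, while $v_n\to v_i$ for odd $i$ and $v_i\to v_n$ for even $i$ ($1\leq i\leq n-1$), so $\theta_{L_n}(v_a,v_n)\theta_{L_n}(v_b,v_n)=+1$ exactly when $a,b$ have the same parity. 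Then split into case (A), $u_1=v_a$, $u_2=v_b$ with $1\leq a<b\leq n-1$, and case (B), $u_2=v_n$ and $u_1=v_a$. In case (A), choosing $w=v_c$ inside the transitive part yields product $+1$ when $c<a$ or $c>b$ and $-1$ when $a<c<b$; if $a,b$ are consecutive one instead takes an interior witness with product $+1$ (available since $n\geq 4$) against $w=v_n$, which gives $-1$ because consecutive indices have opposite parity; in the leftover subcase $a=1$, $b=n-1$ one uses that $1$ and $n-1$ have the same parity (as $n$ is even), so $w=v_n$ gives $+1$, against an interior witness giving $-1$. In case (B) one has $\theta_{L_n}(v_a,v_c)\theta_{L_n}(v_n,v_c)=\operatorname{sgn}(c-a)\,(-1)^{c-1}$, and two consecutive indices lying on the same side of $a$ produce opposite values; such a pair exists unless $a\leq 2$ and $a\geq n-2$, which forces $n\leq 4$, and the single remaining instance $a=2$ in $L_4$ is checked directly ($f(v_1)=-1$, $f(v_3)=+1$). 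In every case the function is non‑constant, so $L_n$ is basic for even $n\geq 4$.

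The only real obstacle is the bookkeeping in the even case: the generic transitive‑part argument fails precisely at the degenerate configurations — $u_1,u_2$ adjacent in the transitive order, the two extremes $v_1$ and $v_{n-1}$, and the smallest instance $n=4$ — where the witnesses must be selected using the parity of $n$. There is no conceptual difficulty here; the risk is merely an off‑by‑one or parity slip, which I would guard against by stating the arc structure of $L_n$ explicitly up front and then checking each subcase against it.
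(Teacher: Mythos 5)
Your proposal is correct and follows essentially the same route as the paper: the odd case is dispatched identically via Lemma \ref{Lnadd1switch} together with Theorem \ref{switchbasic}, and the even case is the same direct case analysis on pairs (both vertices in the transitive chain versus one of them being $v_n$) using the non-constancy criterion of Corollary \ref{jugcoretwo}, with the same degenerate subcases (consecutive indices, the extreme pair $\{v_1,v_{n-1}\}$, and the small instance) singled out and handled by parity of the witness. The only differences are cosmetic choices of witnesses.
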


\begin{proof}
	Let $n$ be even. Then $n \geq 4$. Suppose, for the sake of contradiction, that there exists $\{v_i,v_j\}\subset V(L_n)$ ($i<j$) such that $v_i$ and $v_j$ are CR-associated vertices in $L_n$. Then $(\theta_T(v_i,v_k)\cdot\theta_T(v_j,v_k))\cdot (\theta_T(v_i,v_{\ell})\cdot\theta_T(v_j,v_{\ell}))=-1$, where $k,\ell$ satisfy
	\[
	(k,\ell)=\begin{cases}
		(n-1,2) , & \text{if $i=1$, $j=n$};\\
		(1,2), & \text{if $i=n-1$, $j=n$};\\
		(i-1,i+1), & \text{if $i\in \{2,3,\ldots,n-2\}$, $j=n$};\\
		(n-1,n), & \text{if $1\leq i <j \leq n-2$ and $j=i+1$};\\
		(1,n), & \text{if $i=n-2$ and $j=n-1$};\\
		(2,n), & \text{if $i=1$, $j=n-1$};\\
		(n-1,2), & \text{if $i=1$ and $3 \leq j \leq n-2$};\\
				(1,i+1), & \text{if $2\leq i <j \leq n-1$ and $j \neq i+1$}.
	\end{cases}
	\] By Corollary \ref{jugcoretwo}, there is a contradiction. Thus, there exist no such $v_i$ and $v_j$. It follows that $L_n$ is a basic tournament for even $n\geq 4$.

	Let $n$ be odd. Then by Lemma \ref{Lnadd1switch} and Theorem \ref{thforswiso}, $L_n$ is not a basic tournament.
\end{proof}

\begin{lemma}{\rm(\!\!\cite{Dfive})}\label{ninedet}
	Let $ T $ be an $n$-tournament $(n\geq 2)$ with vertices $v_1,\ldots,v_n$, $ H_1,\ldots,H_n $ be tournaments. If there exists $ H_i $ such that $ H_i $ is not transitive for some $i$ $(1\leq i\leq n)$, then there exists a subtournament $ T_{sub} $ of $ T(H_1,\ldots,H_n) $ such that $\det(T_{sub})=9\cdot\det(T)$. Especially, if $H_i$ is a $3$-cycle and $|V(H_j)|=1$ for $j \neq i$, then $\det(T(H_1,\ldots,H_n))=9\cdot\det(T)$.
\end{lemma}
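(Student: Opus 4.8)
The plan is to prove the concluding (``especially'') clause by an explicit determinant computation, and then obtain the general statement at once. For the reduction: if $H_i$ is not transitive it contains a $3$-cycle $C$ on vertices $\{a,b,c\}$ (a tournament is transitive iff it has no $3$-cycle, as recalled in the introduction). Choose a vertex $x_j\in V(H_j)$ for each $j\neq i$ and set $U=\{a,b,c\}\cup\{x_j:j\neq i\}$. Directly from Definition \ref{DefBLWOUP}, the induced subtournament $T(H_1,\dots,H_n)[U]$ is exactly the blowup of $T$ in which $v_i$ is replaced by the $3$-cycle $C$ and every other $v_j$ by a single vertex; so, granting the special case, $T_{sub}:=T(H_1,\dots,H_n)[U]$ satisfies $\det(T_{sub})=9\det(T)$, as required. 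It therefore remains to show: if $T^{*}=T(H_1,\dots,H_n)$ with $H_i$ a $3$-cycle and $|V(H_j)|=1$ for all $j\neq i$, then $\det(T^{*})=9\det(T)$.

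For this, take $i=1$ without loss of generality, write $V(H_1)=\{w_1,w_2,w_3\}$ with $w_1\to w_2\to w_3\to w_1$, and identify $V(H_j)=\{v_j\}$ for $j\ge 2$, so that $T^{*}$ has $n+2$ vertices. If $n$ is odd then $T^{*}$ has odd order, whence $\det(T^{*})=\det(T)=0$ and there is nothing to prove; assume $n$ is even. Order the vertices of $T^{*}$ as $w_1,w_2,w_3,v_2,\dots,v_n$. Since the blowup forces $w_1,w_2,w_3$ to have the same arcs to each $v_j$ as $v_1$ does, the entry of $S_{T^{*}}$ in position $(w_\ell,v_j)$ equals $r_j:=(S_T)_{1j}$ for each $\ell\in\{1,2,3\}$. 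Apply the row operations $R_{w_2}\to R_{w_2}-R_{w_1}$, $R_{w_3}\to R_{w_3}-R_{w_1}$ together with the matching column operations $C_{w_2}\to C_{w_2}-C_{w_1}$, $C_{w_3}\to C_{w_3}-C_{w_1}$; this is a congruence $S_{T^{*}}\mapsto E\,S_{T^{*}}E^{\mathsf{T}}$ with $\det E=1$, so it preserves skew-symmetry and the determinant, and it clears all $(w_2,v_j)$- and $(w_3,v_j)$-entries. Reordering the first three vertices as $w_2,w_3,w_1$ (an even permutation), the resulting skew-symmetric matrix has the block form
\[
\begin{pmatrix} K & B \\ -B^{\mathsf{T}} & M\end{pmatrix},\qquad
K=\begin{pmatrix}0&3\\-3&0\end{pmatrix},\qquad
B=\begin{pmatrix}-1&0&\cdots&0\\ 1&0&\cdots&0\end{pmatrix},
\]
where the $n\times n$ block $M$ indexed by $\{w_1,v_2,\dots,v_n\}$ was untouched by all four operations, hence equals $S_T$ after relabeling $w_1$ as $v_1$. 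As $K$ is invertible, the Schur-complement identity gives $\det(T^{*})=\det(K)\cdot\det\big(M+B^{\mathsf{T}}K^{-1}B\big)$, and one checks $B^{\mathsf{T}}K^{-1}B=\mathbf 0$ (the only nonzero entries of $B$ sit in the column indexed by $w_1$, and the two contributions cancel). Therefore $\det(T^{*})=\det(K)\det(M)=9\det(T)$.

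The computation is short once the right congruence is written down; the only genuine care is bookkeeping — confirming that the reduced bottom-right block is really $S_T$ (so that $\det(T)$, and not $\det(T-v_1)$, appears) and that the Schur cross-term $B^{\mathsf{T}}K^{-1}B$ vanishes. If that bookkeeping proved awkward for a general $T$, an alternative is to use the Pfaffian: since $S_{T^{*}}$ has even order, $\det(T^{*})=\mathrm{Pf}(S_{T^{*}})^{2}$, and in any perfect matching of $V(T^{*})$ exactly one edge lies inside $\{w_1,w_2,w_3\}$ while the remaining one of these vertices is matched to some $v_j$; grouping the matchings by which of the three edges of the $3$-cycle is used and invoking the row-expansion of $\mathrm{Pf}(S_T)$ gives $\mathrm{Pf}(S_{T^{*}})=\pm 3\,\mathrm{Pf}(S_T)$, hence again $\det(T^{*})=9\det(T)$. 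I expect the first (congruence) route to be the cleanest, with the second kept in reserve.
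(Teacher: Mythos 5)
This lemma is imported verbatim from \cite{Dfive} and the paper gives no proof of it, so there is no internal argument to compare against; I can only assess your proof on its own terms, and it is correct. The reduction of the general case to the ``especially'' clause is sound: a non-transitive $H_i$ contains a $3$-cycle, and by Definition \ref{DefBLWOUP} the set $U$ you pick induces exactly the blowup of $T$ in which $v_i$ becomes that $3$-cycle and every other part is a singleton. The congruence computation also checks out: with $w_1\to w_2\to w_3\to w_1$, the operations $R_{w_2}\to R_{w_2}-R_{w_1}$, $R_{w_3}\to R_{w_3}-R_{w_1}$ and their column counterparts turn the $3\times 3$ block on $\{w_1,w_2,w_3\}$ into $\left[\begin{smallmatrix}0&1&-1\\-1&0&3\\1&-3&0\end{smallmatrix}\right]$ while zeroing all $(w_2,v_j)$ and $(w_3,v_j)$ entries and leaving the block on $\{w_1,v_2,\ldots,v_n\}$ equal to $S_T$; the cross term $B^{\mathsf{T}}K^{-1}B$ vanishes because $K^{-1}$ is skew-symmetric and $B$ has rank one supported on a single column (this is exactly the mechanism of Lemma \ref{skewproperty}), so the Schur identity gives $\det(K)\det(S_T)=9\det(T)$. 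Your argument is in the same spirit as the Schur-complement toolkit the paper itself deploys in Subsection \ref{subsecLn-tools} (Lemma \ref{schur}, Proposition \ref{toolforS}), just applied with the invertible block in the top-left corner rather than the bottom-right; the Pfaffian route you keep in reserve is not needed.
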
	

\begin{lemma}\label{Lnadd1CR}
	Let $n$ be a positive even integer. If $L_n$ is a CR tournament, then $L_{n+1}$ is a CR tournament. 
\end{lemma}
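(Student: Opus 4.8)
The plan is to reduce to a blowup. By Lemma \ref{Lnadd1switch}, $L_{n+1}$ is switching equivalent to the $1$-transitive blowup $\hat{L}:=L_n(2,1,\ldots,1)$ in which the source $v_1$ of $L_n$ is doubled into a covertex pair $\{a,b\}$; since being a CR tournament is a switching invariant (Theorem \ref{switchCRtournament}), it suffices to prove $\hat{L}$ is a CR tournament. By Corollary \ref{crllblowupclass} and Theorem \ref{Lntheorem1}, $\hat{L}\in\mathcal{D}_{n-1}\backslash\mathcal{D}_{n-3}$. If $n=2$ then $\hat{L}$ is a $3$-tournament, a CR tournament by Proposition \ref{ex3cycle}; so assume $n\ge 4$, whence $|V(\hat{L})|=n+1\ge 5$ and $\hat{L}$ is not a trivial CR tournament. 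By Definition \ref{defCR} it remains to show: for every non-CR vertex $u$ for $\hat{L}$, one has $\hat{L}(u)\notin\mathcal{D}_{n-1}$.

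The key observation is that $\hat{L}-a$ and $\hat{L}-b$ are both isomorphic to $L_n$ (deleting one copy of the doubled source). Fix a non-CR vertex $u$ for $\hat{L}$. If $u$ is a non-CR vertex for $\hat{L}-a$ or for $\hat{L}-b$, then since $L_n$ is a CR tournament, the corresponding $(\hat{L}-a)(u)$ or $(\hat{L}-b)(u)$ — a subtournament of $\hat{L}(u)$ — lies outside $\mathcal{D}_{n-1}$, and we are done. So assume $u$ is a CR vertex for both $\hat{L}-a$ and $\hat{L}-b$.

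Now comes the structural analysis of this last case. First I would show $\theta_{\hat{L}(u)}(u,a)=-\theta_{\hat{L}(u)}(u,b)$: if these were equal, then, using that $a$ and $b$ are covertices in $\hat{L}$ and splitting according to whether the CR-partner of $u$ in $\hat{L}-b$ is $a$ or not, Proposition \ref{jugcoreone} forces $u$ to be CR-associated with some vertex in $\hat{L}(u)$, i.e.\ to be a CR vertex for $\hat{L}$, a contradiction. After switching $\hat{L}(u)$ at $\{a,b\}$ if necessary (which, by Theorem \ref{switchnochgcore} and Corollary \ref{switchnonCR}, keeps $u$ non-CR for the blowup and CR for both deleted copies, and only replaces $L_n$ by a switching-equivalent tournament in $\{L_n,L_n^-\}$ up to isomorphism), we may assume $\{a,b,u\}$ is a $3$-cycle. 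Applying Lemma \ref{CRveforLneven} on each side, the block-count of $u$'s dominating relation to the transitive part lies in $\{1,2,n-1\}$ for $\hat{L}-a$ and for $\hat{L}-b$; these two counts are consecutive integers (the two sign-vectors differ exactly in the first coordinate, which is $\theta(u,a)$ on one side and its negative on the other), so for $n\ge 6$ they must be $\{1,2\}$, which means $u$ has a constant relation $s$ to $\{v_2,\ldots,v_{n-1}\}$. Examining the few possibilities for $s$ and for $\theta(u,v_n)$: one is impossible because it makes $u$ a revertex of $b$ in $\hat{L}(u)$; one makes $\{a,b,u\}$ have the same relations to all of $V(\hat{L})\setminus\{a,b\}$ as $a$ and $b$, so that $\hat{L}(u)=L_n(\{3\text{-cycle}\},1,\ldots,1)$ and $\det(\hat{L}(u))=9(n-1)^2>(n-1)^2$ by Lemma \ref{ninedet}; the remaining one or two configurations are the delicate ones. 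The case $n=4$, where every vertex is a CR vertex for $L_4$ by Lemma \ref{CRveforLneven}(ii), is finite and settled by direct computation ($\det(\hat{L}(u))\ge 25>9$).

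The main obstacle is the residual configurations, where $\{a,b,u\}$ is a $3$-cycle all of whose vertices dominate the transitive block $\{v_2,\ldots,v_{n-1}\}$ and only the arcs to the special vertex $v_n$ break the uniformity; there one must still produce a subtournament of $\hat{L}(u)$ of determinant exceeding $(n-1)^2$. I would look for a blowup of some $L_m$ inside $\hat{L}(u)$ — taking $m=n$, or $m=n-1$ via the identification $L_n-v_{n-1}\cong L_{n-1}$ (deletion of the sink of $L_n$) — in which one block is a $3$-cycle, so that Lemma \ref{ninedet} applies; alternatively a direct Pfaffian computation identifies one residual $\hat{L}(u)$ as switching isomorphic to $L_{n+2}$ (determinant $(n+1)^2$). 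Controlling $\det(\hat{L}(u))$ in these residual cases for general even $n$ is, I expect, the hard part, and presumably the place where the $Z$-matrix and Pfaffian machinery of Section \ref{sec-allLn} enters.
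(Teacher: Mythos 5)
Your overall strategy matches the paper's: both arguments exploit the two copies of $L_n$ obtained by deleting one vertex of the distinguished covertex pair (the paper works directly in $L_{n+1}$ and deletes $v_1$ or $v_n$, getting $L_n$ and $L_n^-$), dispose immediately of the case where $u$ is non-CR for one of these copies, and then classify the remaining configurations via Lemma \ref{CRveforLneven}/\ref{CRveforLnodd}. Your identification of the easy residual cases (the all-covertex case giving $L_n(3\text{-cycle},1,\ldots,1)$ with determinant $9(n-1)^2$ via Lemma \ref{ninedet}, and the uniform case giving a switch of $L_{n+2}$) also agrees with the paper's Subcases 2.1 and the first half of 2.2.

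However, there is a genuine gap exactly where you flag it: the residual configuration in which $\{a,b,u\}$ is a $3$-cycle, $u$ agrees with $a,b$ on the transitive block $\{v_2,\ldots,v_{n-1}\}$, but disagrees with them on the special vertex. Neither of your proposed tools closes it: there is no $3$-cycle block with uniform attachment to an $L_m$ here (the special vertex splits the triple), so Lemma \ref{ninedet} does not apply, and the $Z$-matrix/Pfaffian machinery of Section \ref{sec-allLn} is built for proving that $L_n$ itself is a CR tournament, not for this lemma. The paper's resolution is a different and cheaper idea: it produces a \emph{second} embedded copy of $L_n$ by substituting $u$ for $v_2$ in the transitive chain (i.e., taking the vertex set $(V\setminus\{v_n,v_2\})\cup\{u\}$ in its notation, which is again an $L_n$ with chain $v_1\to u\to v_3\to\cdots\to v_{n-1}$), computes that the deleted covertex has dominating relation $(1,-1,n-3)$ to this new copy — hence is a \emph{non-CR} vertex for it by Lemma \ref{CRveforLneven} — and then invokes the hypothesis that $L_n$ is a CR tournament a second time to conclude the induced subtournament is outside $\mathcal{D}_{n-1}$. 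No determinant is ever computed in this case. Supplying this re-embedding step is what your proof needs to be complete.
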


\begin{proof}
	When $n\in \{2,4,6\}$, $L_n$ is a strong CR tournament by Proposition \ref{L246}, and thus $L_{n+1}$ is a CR tournament since $L_{n+1}$ is switching equivalent to a $1$-transitive blowup of $L_n$ by Lemma \ref{Lnadd1switch}. Next, we consider $n\geq 8$.
	
	Let $V(L_{n+1})=\{v_1,v_2,\ldots,v_n,v_{n+1}\}$ and $X=\{v_1,v_2,\ldots,v_n\}$ such that $L_{n+1}[X]$ is transitive with $v_1\rightarrow v_2\rightarrow \cdots \rightarrow v_n$ and $\psi_{L_{n+1}}(v_{n+1},X)=(1,-1,\ldots,(-1)^{n-2},(-1)^{n-1})$. By Lemma \ref{Lnadd1switch}, $L_{n+1}$ is switching equivalent to a $1$-transitive blowup of $L_n$. Then by Theorem \ref{Lntheorem1} and Corollary \ref{crllblowupclass}, we have $L_n \in \mathcal{D}_{n-1} \backslash \mathcal{D}_{n-3}$ and $L_{n+1}\in \mathcal{D}_{n-1}\backslash \mathcal{D}_{n-3}$.
	
	Let $u$ be a non-CR vertex for $L_{n+1}$, $L_{n+1}(u)$ be the tournament generated by $L_{n+1}$ and $u$, and $\psi_{L_{n+1}(u)}(u,X)=(\alpha_1,\alpha_2,\ldots,\alpha_t)$. Then by Lemma \ref{CRveforLneven}, we have $t \in \{3,4,\ldots,n-1\}$, or $t=1$ and $\alpha_1 \cdot \theta_{L_{n+1}(u)}(u,v_{n+1})>0$. Now we show $L_{n+1}(u)\notin \mathcal{D}_{n-1}$.
	
	\textbf{Case 1}: $u$ is a non-CR vertex for $L_{n+1}[V(L_{n+1})\backslash \{v_n\}]$ or $L_{n+1}[V(L_{n+1})\backslash \{v_1\}]$.
	
	It is easy to see that $L_{n+1}[V(L_{n+1})\backslash\{v_n\}]$ is $L_n$ and $L_{n+1}[V(L_{n+1})\backslash\{v_1\}]$ is $L_n^{-}$. Since $L_n$ and $L^{-}_{n}$ are CR tournaments ($L_n^{-}$ is a switch of $L_n$), we have $L_{n+1}(u)[(V(L_{n+1})\backslash \{v_n\})\cup \{u\}]\notin \mathcal{D}_{n-1}$ or $L_{n+1}(u)[(V(L_{n+1})\backslash \{v_1\}) \cup \{u\}]\notin \mathcal{D}_{n-1}$ by the fact $L_n,L^{-}_n \in \mathcal{D}_{n-1} \backslash \mathcal{D}_{n-3}$ and Definition \ref{defCR}, which implies $L_{n+1}(u)\notin \mathcal{D}_{n-1}$.
	
	\textbf{Case 2}: $u$ is a CR vertex for $L_{n+1}[V(L_{n+1})\backslash \{v_n\}]$ and $L_{n+1}[V(L_{n+1})\backslash \{v_1\}]$.
	
	\textbf{Subcase 2.1}: $t=1$ and $\alpha_1 \cdot \theta_{L_{n+1}(u)}(u,v_{n+1})>0$.
	
	Let $W=\{v_{n+1}\}$ if $\alpha_1>0$ and $W=\emptyset$ if  $\alpha_1<0$. Then $L_{n+1}(u)$ is switching equivalent to $L_{n+2}$ with respect to $W$. Thus $L_{n+1}(u)\notin \mathcal{D}_{n-1}$ in this subcase by $L_{n+2} \in \mathcal{D}_{n+1}\backslash \mathcal{D}_{n-1}$ (Theorem \ref{Lntheorem1}) and Corollary \ref{switchDk}.
	
	\textbf{Subcase 2.2}: $t=3$.
	
	Firstly, we show $|\alpha_1|=|\alpha_3|=1$. Otherwise, if there exists $i\in \{1,3\}$ such that $|\alpha_i|>1$, we take $j=\begin{cases}
		1, & \text{if $i=1$}; \\
		n, & \text{if $i=3$},
	\end{cases}$ then $u$ is a non-CR vertex for $L_{n+1}[V(L_{n+1})\backslash \{v_j\}]$ by Lemma \ref{CRveforLneven}, which contradicts the given condition that $u$ is a CR vertex for $L_{n+1}[V(L_{n+1})\backslash \{v_n\}]$ and $L_{n+1}[V(L_{n+1})\backslash \{v_1\}]$. Therefore, $|\alpha_1|=1$, $|\alpha_2|=n-2$, $|\alpha_3|=1$.
	
	Let $W=\{v_n\}$ if $\alpha_1<0$ and $W=\{u,v_n\}$ if $\alpha_1>0$. Then $L_{n+1}(u)$ is switching equivalent to $L_{n+1}'(u)$ with respect to $W$ such that $L_{n+1}'(u)[V(L_{n+1})]$ is a $1$-transitive blowup of $L_n$ (where $v_n$ and $v_1$ are covertices), $L_{n+1}'(u)[\{v_n,v_1,\ldots,v_{n-1}\}]$ is transitive with $v_n \rightarrow v_1 \rightarrow \cdots \rightarrow v_{n-1}$, and $\psi_{L_{n+1}'(u)}(u,\{v_n,v_1,\ldots,v_{n-1}\})=(1,-1,n-2)$.
	
	If $v_{n+1}\rightarrow u$ in $L_{n+1}'(u)$, then $L_{n+1}'(u)$ is a blowup of $L_n$, that is, $L_{n+1}'(u)=L_n(T_1,\ldots,T_n)$ with respect to $T_1=L_{n+1}'(u)[\{v_n,v_1,u\}]$, $T_k=L_{n+1}'(u)[\{v_k\}]$ for $2\leq k \leq n-1$, and $T_n=L_{n+1}'(u)[\{v_{n+1}\}]$. Note that $L_{n+1}'(u)[\{v_n,v_1,u\}]$ is a $3$-cycle, we have $L_{n+1}'(u)[\{v_n,v_1,u\}]$ is not transitive. Then by Lemmas \ref{Minors} and \ref{ninedet}, we have $\det(L_{n+1}(u))=\det(L_{n+1}'(u))=9\cdot\det(L_n)=9(n-1)^2>(n-1)^2$, which implies $L_{n+1}'(u)\notin \mathcal{D}_{n-1}$ and $L_{n+1}(u)\notin \mathcal{D}_{n-1}$.
	
	If $v_{n+1}\leftarrow u$ in $L_{n+1}'(u)$, let $Z=(V(L_{n+1})\backslash \{v_n,v_2\})\cup \{u\}$. It is clear that $L_{n+1}'(u)[Z]$ is $L_n$ with $v_1 \rightarrow u \rightarrow v_3 \rightarrow \cdots \rightarrow v_{n-1}$. Now $\psi_{L_{n+1}'(u)}(v_n,\{v_1,u,v_3,\ldots,v_{n-1}\})=(1,-1,n-3)$. Therefore $v_n$ is a non-CR vertex for $L_{n+1}'(u)[Z]$ by Lemma \ref{CRveforLneven}, which implies $L_{n+1}'(u)[Z \cup \{v_n\}]\notin \mathcal{D}_{n-1}$ by the given condition that $L_n$ is a CR tournament, it follows that $L_{n+1}'(u)\notin \mathcal{D}_{n-1}$ and $L_{n+1}(u)\notin \mathcal{D}_{n-1}$.
	
	\textbf{Subcase 2.3}: $4\leq t\leq n-1$.
	
	Note that $L_{n+1}[V(L_{n+1})\backslash\{v_n\}]$ is $L_n$ and $L_{n+1}[V(L_{n+1})\backslash\{v_1\}]$ is $L_n^{-}$. When $4 \leq t \leq n-2$, it is easy to see that there exists $i\in \{1,n\}$ such that $\psi_{L_{n+1}(u)}(u,X\backslash \{v_i\})=(\beta_1,\beta_2,\ldots,\beta_s)$, where $3 \leq s \leq n-2$. When $t=n-1$, we have $|\alpha_1|=1$ or $|\alpha_{n}|=1$. Let $i=\begin{cases}
		1, & \text{if $|\alpha_1|=1$};\\
		n, & \text{if $|\alpha_{n-1}|=1$}.
	\end{cases}$ Then $s=n-2$. Thus $u$ is a non-CR vertex for $L_{n+1}[V(L_{n+1})\backslash \{v_i\}]$ by Lemma \ref{CRveforLneven}, which contradicts the given condition that $u$ is a CR vertex for $L_{n+1}[V(L_{n+1})\backslash \{v_n\}]$ and $L_{n+1}[V(L_{n+1})\backslash \{v_1\}]$.
	
	Combining the above cases, we have $L_{n+1}(u)\notin \mathcal{D}_{n-1}$, it follows that $L_{n+1}$ is a CR tournament. We complete the proof.
\end{proof}

\begin{lemma}\label{Lnstrong}
	Let $n$ be a positive even integer. If $L_n$ is a CR tournament, then $L_n$ is a strong CR tournament.
\end{lemma}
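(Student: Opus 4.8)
The plan is to invoke Proposition~\ref{strongCR1}, which reduces the statement to: \emph{every $1$-transitive blowup of $L_n$ is a CR tournament}. So fix a $1$-transitive blowup $\hat L$ of $L_n$; up to isomorphism $\hat L$ is obtained from $L_n$ by replacing a single vertex $v_i$ by a transitive $2$-tournament on a twin pair $\{v_i,w\}$ (the orientation of the arc between $v_i$ and $w$ is immaterial, since $v_i,w$ are covertices). By Corollary~\ref{crllblowupclass} and Theorem~\ref{Lntheorem1}, $\hat L\in\mathcal D_{n-1}\backslash\mathcal D_{n-3}$, and since $|V(\hat L)|=n+1\ge 5$, $\hat L$ is not a diamond; hence by Definition~\ref{defCR} it suffices to prove $\hat L(u)\notin\mathcal D_{n-1}$ for every non-CR vertex $u$ for $\hat L$. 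The two induced copies of $L_n$ in $\hat L$, namely $C_1=\hat L[V(L_n)]$ and $C_2=\hat L\big[(V(L_n)\backslash\{v_i\})\cup\{w\}\big]$ (the latter being $\cong L_n$ because $v_i,w$ are covertices), will be the basic objects. First, if $u$ is a non-CR vertex for $C_1$ or for $C_2$, then since $L_n$ is a CR tournament and $C_1,C_2\in\mathcal D_{n-1}\backslash\mathcal D_{n-3}$ (Corollary~\ref{switchDk}), Definition~\ref{defCR} gives $C_1(u)\notin\mathcal D_{n-1}$ or $C_2(u)\notin\mathcal D_{n-1}$, whence $\hat L(u)\notin\mathcal D_{n-1}$.

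The remaining case is when $u$ is a CR vertex for \emph{both} $C_1$ and $C_2$; this genuinely occurs (for instance when $n=6$, $v_i=v_3$, and the ``seam'' of the arc-pattern of $u$ lies exactly at the defect that the duplication introduces into the alternating pattern of $v_6$), so it cannot be dismissed. Here I would first show that $u$ cannot relate to $v_i$ and to $w$ in a compatible way: if $u$ is CR-associated with $x_1$ in $C_1(u)$ and $\theta_{\hat L(u)}(u,w)$ matched $\theta_{\hat L(u)}(x_1,w)$, then $u$ and $x_1$ would be CR-associated in all of $\hat L(u)$, contradicting that $u$ is a non-CR vertex for $\hat L$. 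Combining this with $w$ being a covertex of $v_i$ and with Proposition~\ref{lem3maintheorem} (basicness of $L_n$ forces a unique associated vertex) yields
\[
\theta_{\hat L(u)}(u,v_i)=-\,\theta_{\hat L(u)}(u,w),
\]
so $u$ relates to the twin pair $\{v_i,w\}$ in opposite ways, and $\{u,v_i,w\}$ is a $3$-cycle (or, in degenerate sub-cases, a transitive triple) in $\hat L(u)$.

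It then remains to extract a large-determinant subtournament. When $\{u,v_i,w\}$ is a $3$-cycle lying in a blowup position — all three relating the same way to the other $n-1$ vertices, which by Lemma~\ref{CRveforLneven} and Lemma~\ref{lemCRtotransblowup} is exactly what happens in the principal sub-case where $u$ is CR-associated in $C_1(u)$ with $v_i$ itself — the tournament $\hat L(u)$ contains, up to switching, a copy of $L_n$ with one vertex blown up into a $3$-cycle, so by Lemma~\ref{ninedet} it has a subtournament of determinant $9(n-1)^2>(n-1)^2$ and hence $\hat L(u)\notin\mathcal D_{n-1}$. In the other sub-cases one uses Lemma~\ref{lemCRtotransblowup} to view $C_1(u)$ as switching-equivalent to a $1$-transitive blowup of $L_n$ and shows that $w$ (or $v_i$) is a non-CR vertex for it, where (by Lemma~\ref{CRveforLneven}) the vertex being duplicated in $C_1(u)$ sits strictly closer to an endpoint of the transitive part of $L_n$ than $v_i$ does; iterating this reduction must terminate at a $1$-transitive blowup duplicating $v_1$ or $v_{n-1}$, which by Lemma~\ref{Lnadd1switch} (and its $L_n^{-}$-analogue) is switching-equivalent to $L_{n+1}$, a CR tournament by Lemma~\ref{Lnadd1CR} and Theorem~\ref{thforswiso}; this again forces $\hat L(u)\notin\mathcal D_{n-1}$. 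For $n\in\{4,6\}$ one may instead quote Proposition~\ref{L246} directly.

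I expect the hard case to be the real obstacle. The subtlety is precisely that a vertex $u$ can be a CR vertex for both ``obvious'' copies of $L_n$ inside $\hat L$ while being a non-CR vertex for $\hat L$, so the two-copies argument does not hand us the required large-determinant subtournament; one must instead force $\theta(u,v_i)=-\theta(u,w)$, read off a $3$-cycle, and then either apply Lemma~\ref{ninedet} or descend to $L_{n+1}$ via Lemma~\ref{Lnadd1CR}. Making this watertight requires a careful enumeration of sub-cases (which of $v_i,w$ carries the CR-association inside $C_1$ or $C_2$; covertices versus revertices; and the degenerate triple), together with a clean monotone parameter — essentially the distance of the duplicated vertex from the endpoints of the transitive part of $L_n$, with the duplication of the ``special'' vertex $v_n$ treated as the worst case — so that the descent is guaranteed to terminate. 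Getting that induction scheme right is where the work concentrates.
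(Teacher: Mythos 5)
Your reduction via Proposition~\ref{strongCR1}, your first dichotomy (if $u$ is non-CR for one of the two internal copies $C_1,C_2$ of $L_n$ you are done), your derivation of $\theta_{\hat L(u)}(u,v_i)=-\theta_{\hat L(u)}(u,w)$ in the remaining case, and your two closing mechanisms (the $3$-cycle blowup fed into Lemma~\ref{ninedet}, and the reduction to $L_{n+1}$ via Lemma~\ref{Lnadd1CR}) are all sound and correspond to ingredients the paper actually uses (its Subcases 3.1--3.4 are essentially your sub-cases indexed by the block number $t$ of Lemma~\ref{CRveforLneven}). The gap is in the step you yourself flag as the crux: the descent. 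Your proposed monotone parameter --- ``distance of the duplicated vertex from an endpoint of the transitive part, with $v_n$ worst'' --- is not decreased by the reduction. In the $t=2$ sub-case of Lemma~\ref{CRveforLneven} the vertex $x_1$ that $u$ is CR-associated with in $C_1(u)$ is the \emph{other} element of the pair $\{v_j,v_{j+1}\}$ straddling the block boundary, so from the blowup duplicating $v_2$ you can be sent to the blowup duplicating $v_3$, i.e.\ strictly \emph{away} from the endpoint; the recursion among the blowups $B_{v_1},\dots,B_{v_{n-1}}$ is therefore mutual rather than well-founded, and the argument does not close.

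The missing idea, which is how the paper makes this terminate, is that \emph{no} induction among the $B_{v_j}$ with $j\le n-1$ is needed: for every $j\in\{1,\dots,n-1\}$, switching $L_n(1,\dots,1,2,1,\dots,1)$ on the union of the blowup classes of $v_1,\dots,v_{j-1}$ (adjoining $v_n$'s class when $j$ is even) rotates the transitive part and shows that duplicating any vertex of $X=\{v_1,\dots,v_{n-1}\}$ yields a tournament switching isomorphic to $L_n(2,1,\dots,1)$, hence to $L_{n+1}$, which is a CR tournament by Lemma~\ref{Lnadd1CR} together with Theorems~\ref{switchCRtournament} and~\ref{thforswiso}. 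You only have this for $v_1$ (Lemma~\ref{Lnadd1switch}) and assert an ``$L_n^-$-analogue'' for $v_{n-1}$, leaving the interior vertices to the broken induction. With the rotation observation in hand, the only genuinely hard blowup is the one duplicating $v_n$, and there your case analysis does always land either in the $3$-cycle/Lemma~\ref{ninedet} situation ($t=n-1$) or at some $B_{v_j}$ with $j\le n-1$ ($t\in\{1,2\}$), which is exactly the structure of the paper's Case~3. So the plan is repairable, but as written the termination claim is false and the proof is incomplete.
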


\begin{proof}
	By Proposition \ref{L246}, $L_2$, $L_4$ and $L_6$ are strong CR tournaments. Next, we consider $n\geq 8$.
	
	If $R$ is a $1$-transitive blowup of $L_n$, then there exist positive integers $a_1,a_2,\ldots,a_n$ corresponding to $v_1,v_2,\ldots,v_n$ such that $R=L_n(a_1,a_2,\ldots,a_n)$, where $|a_i|=2$ for some $i$, and $|a_j|=1$ for $j\in \{1,2,\ldots,n\}\backslash \{i\}$. Let $X_k$ ($k=1,2,\ldots,n$) denote the vertex subset of $V(R)$ corresponding to $a_k$. Now we show $R$ is a CR tournament. Clearly, $R\in \mathcal{D}_{n-1} \backslash \mathcal{D}_{n-3}$ by Theorem \ref{Lntheorem1} and Corollary \ref{crllblowupclass}.
	
	\textbf{Case 1}: $i=1$.
	
	In this case, $R=L_n(2,1,1,\ldots,1)$. Let $X_1=\{w_1,w_2\}$ such that $w_1 \rightarrow w_2$ in $R$. Then $R$ is switching equivalent to $L_{n+1}$ with respect to $\{w_1\}$, and thus $R$ is a CR tournament by Lemma \ref{Lnadd1CR} and Theorem \ref{thforswiso}.
	
	\textbf{Case 2}: $i\in \{2,3,\ldots,n-1\}$.
	
	Let $W=X_1\cup X_2\cup \cdots \cup X_{i-1}$ if $i$ is odd, $W=X_1\cup X_2\cup \cdots \cup X_{i-1}\cup X_{n}$ if $i$ is even. Then $R$ is switching equivalent to $R'$ with respect to $W$, where $R'=L_n(a_i,a_{i+1},\ldots,a_{n-1},a_1,\ldots,a_{i-1},a_n)=L_n(2,1,1,\ldots,1,1)$. By Case 1, $R'$ is a CR tournament, and thus $R$ is a CR tournament by Theorem \ref{thforswiso}.
	
	\textbf{Case 3}: $i=n$.
	
	Let $V(R)=\{w_1,w_2,\ldots,w_{n+1}\}$ such that $X_k=\{w_k\}$ for $k\in \{1,2,\ldots,n-1\}$, $X_n=\{w_n,w_{n+1}\}$ and $w_n\rightarrow w_{n+1}$.
	
	Suppose that $u$ is a non-CR vertex for $R$ with a dominating relation $\sigma$, where $\psi_{R(u)}(u,\\\{w_1,w_2,\ldots,w_{n-1}\})=(\alpha_1,\alpha_2,\ldots,\alpha_t)$ and $R(u)=R(u,\sigma)$. If $\alpha_1<0$, then $R(u)$ is switching equivalent to $R'(u)$ with respect to $\{u\}$ such that $\psi_{R'(u)}(u,\{w_1,w_2,\ldots,w_{n-1}\})=(-\alpha_1,-\alpha_2,\ldots,-\alpha_t)$, where $R'(u)=R(u,\sigma')$ and $\sigma'$ is another dominating relation between $u$ and $V(R)$. By Corollary \ref{switchDk}, we have $R(u)\notin \mathcal{D}_{n-1}$ if and only if $R'(u)\notin \mathcal{D}_{n-1}$. Without loss of generality, we assume that $\alpha_1>0$. In the following, we prove that $R$ is a CR tournament by showing that $R(u)\notin \mathcal{D}_{n-1}$.
	
	\textbf{Subcase 3.1}: $t=1$.
	
	In this subcase, $\psi_{R(u)}(u,\{w_1,w_2,\ldots,w_{n-1}\})=(n-1)$. It is easy to check that $u$ is a non-CR vertex for $R$ if and only if $\theta_{R(u)}(u,w_n)\cdot \theta_{R(u)}(u,w_{n+1})=-1$. Then there exist $m_1 \in \{n,n+1\}$ and $m_2 \in \{n,n+1\} \backslash \{m_1\}$ such that $\theta_{R(u)}(u,w_{m_1})=1$ and $\theta_{R(u)}(u,w_{m_2})=-1$.
	
	Let $Z=(\{w_1,w_2,\ldots,w_{n-1}\}\backslash\{w_1\})\cup \{u\}$. Then $R(u)[Z\cup \{w_{m_2}\}]$ is $L_n$ and $R(u)[Z]$ is transitive with $u\rightarrow w_2\rightarrow w_3\rightarrow \cdots \rightarrow w_{n-1}$. By Theorem \ref{Lntheorem1}, we have $R(u)[Z\cup \{w_{m_2}\}] \in \mathcal{D}_{n-1}\backslash \mathcal{D}_{n-3}$. Note that $\psi_{R(u)}(w_{m_1},Z)=(\beta_1,\beta_2,\ldots,\beta_{n-2})=(-2,(-1)^{2},\ldots,(-1)^{n-2})$. Then $w_{m_1}$ is a non-CR vertex for $R(u)[Z\cup \{w_{m_2}\}]$ by Lemma \ref{CRveforLneven}, and we have $R(u)[Z\cup \{w_{m_1},w_{m_2}\}]\notin \mathcal{D}_{n-1}$ by the given condition that $R(u)[Z\cup \{w_{m_2}\}]$ ($\cong L_n$) is a CR tournament, it follows that $R(u)\notin \mathcal{D}_{n-1}$.
	
	\textbf{Subcase 3.2}: $t=2$.
	
	Let $W=\{w_1,w_2,\ldots,w_{\alpha_1}\}\cup \{u\}$ if $\alpha_1$ is even, and $W=\{w_1,w_2,\ldots,w_{\alpha_1}\}\cup \{u\} \cup X_n$ if $\alpha_1$ is odd. Then $R(u)$ is switching equivalent to $R'(u)$ with respect to $W$ such that $R'(u)[V(R)]$ is isomorphic to $R$, $R'(u)[\{w_{\alpha_1+1},\ldots,w_{n-1},w_1,\ldots,w_{\alpha_1}\}]$ is transitive with $w_{\alpha_1+1}\rightarrow w_{\alpha_1+2}\rightarrow \cdots \rightarrow w_{n-1}\rightarrow w_{1}\rightarrow \cdots \rightarrow w_{\alpha_1}$ and $\psi_{R'(u)}(u,\{w_{\alpha_1+1},\ldots,w_{n-1},w_1,\ldots,\\w_{\alpha_1}\})=(n-1)$. Therefore, $R(u)$ is switching isomorphic to the tournament discussed in Subcase 3.1, and we have $R(u) \notin \mathcal{D}_{n-1}$ by using Corollary \ref{switchDk}.
	
	\textbf{Subcase 3.3}: $t\in \{3,\ldots,n-2\}$.
	
	Let $Z=\{w_1,w_2,w_3,\ldots,w_{n-1},w_n\}$. Then $R[Z]$ is $L_n$, and $u$ is a non-CR vertex for $R[Z]$ by Lemma \ref{CRveforLneven}. Therefore, $R[Z \cup \{u\}]\notin \mathcal{D}_{n-1}$ by $L_n \in \mathcal{D}_{n-1} \backslash \mathcal{D}_{n-3}$ and the given condition that $L_n$ is a CR tournament, it follows that $R(u) \notin \mathcal{D}_{n-1}$.
	
	\textbf{Subcase 3.4}: $t=n-1$.
	
	Since $\alpha_1>0$, we have $R(u)$ is a blowup of $L_n$ with respect to $H_1,H_2,\ldots,H_n$, where $H_i=R(u)[\{w_i\}]$ for $i\in \{1,\ldots,n-1\}$ and $H_n=R(u)[\{w_n,w_{n+1},u\}]$. Now $u$ is a non-CR vertex for $L_n$ if and only if $R(u)[\{w_n,w_{n+1},u\}]$ is a $3$-cycle, then by Lemma \ref{ninedet}, we have $\det(R(u))=9\cdot \det(L_n)=9\cdot(n-1)^2$, which implies $R(u) \notin \mathcal{D}_{n-1}$.
	
	Combining the above arguments, $R$ is a CR tournament. Therefore, when even $n\geq 8$, $L_n$ is a strong CR tournament if $L_n$ is a CR tournament. We complete the proof.
\end{proof}

\subsection{Tools}\label{subsecLn-tools}
\begin{lemma}{\rm(\!\!\text{ Schur complement}\cite{MatrixAly})}\label{schur}
	Let $M_{11}$ and $M_{22}$ be square matrices such that $M_{22}$ is invertible, and $M$ be the block matrix as follows:
	
	\[M = \left[ \begin{array}{c|c} 
		M_{11} & M_{12}\\ \hline
		M_{21} & M_{22} 
	\end{array} \right].\]	
	Then $\det(M)=\det(M/M_{22}) \cdot \det(M_{22})$, where $M/M_{22}=M_{11}-M_{12}M_{22}^{-1}M_{21}$ is the schur complement of $M_{22}$.
\end{lemma}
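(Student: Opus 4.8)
The plan is to prove the identity by exhibiting an explicit block factorization of $M$ that isolates $M/M_{22}$ and $M_{22}$ as diagonal blocks, and then reading off the determinant multiplicatively. This is the standard ``block elimination'' argument.

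First I would introduce the block lower unitriangular matrix
\[
E=\left[\begin{array}{c|c} I & 0 \\ \hline -M_{22}^{-1}M_{21} & I \end{array}\right],
\]
where the identity blocks have the appropriate sizes, and verify by a direct block multiplication that
\[
ME=\left[\begin{array}{c|c} M_{11}-M_{12}M_{22}^{-1}M_{21} & M_{12} \\ \hline M_{21}-M_{22}M_{22}^{-1}M_{21} & M_{22} \end{array}\right]=\left[\begin{array}{c|c} M/M_{22} & M_{12} \\ \hline 0 & M_{22} \end{array}\right].
\]
The bottom-left block vanishes precisely because $M_{22}$ is invertible, so $M_{22}M_{22}^{-1}M_{21}=M_{21}$; this is the only place the invertibility hypothesis is used.

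Next I would observe that $\det(E)=1$, since $E$ is lower triangular with $1$'s on the diagonal (equivalently, $E$ is a product of elementary column operations, each adding a multiple of one column to another). I would then invoke the standard fact that a block upper triangular matrix $\left[\begin{smallmatrix} A & B \\ 0 & D \end{smallmatrix}\right]$ has determinant $\det(A)\det(D)$ — this is the single auxiliary identity I would cite, or reprove in one line by Laplace expansion along the last columns. Applying it to $ME$ gives $\det(ME)=\det(M/M_{22})\,\det(M_{22})$, and since $\det(ME)=\det(M)\det(E)=\det(M)$, the claimed formula follows.

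The only point requiring any care is the auxiliary determinant formula for block triangular matrices; everything else is a one-line block computation, so I expect no essential obstacle here. If a fully self-contained and left/right-symmetric argument is preferred, one can instead use the two-sided decomposition $M=L\cdot\mathrm{diag}(M/M_{22},\,M_{22})\cdot U$ with $L,U$ block unitriangular (the $LDU$ form), take determinants of all three factors, and conclude the same way.
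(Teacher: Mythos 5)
Your proof is correct and is the standard block-elimination argument. Note that the paper does not prove this lemma at all --- it is stated as a known result with a citation to Horn and Johnson's \emph{Matrix Analysis} --- so your write-up simply supplies the textbook proof of the cited fact; the block computation, the observation that $\det(E)=1$, and the determinant formula for block triangular matrices are all in order.
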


\begin{lemma}\label{skewproperty}
	Let $S$ be a skew-symmetric matrix of order $n$, $x$ and $y$ be vectors of order $n$. Then $x^{\mathsf{T}}Sy=-y^{\mathsf{T}}Sx$. In particular, if $x=y$, then $x^{\mathsf{T}}Sy=0$.
\end{lemma}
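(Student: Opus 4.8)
The plan is to exploit the fact that $x^{\mathsf{T}}Sy$ is a $1\times 1$ matrix, hence equal to its own transpose. First I would write $x^{\mathsf{T}}Sy = (x^{\mathsf{T}}Sy)^{\mathsf{T}}$, and then expand the right-hand side using the standard rule $(ABC)^{\mathsf{T}} = C^{\mathsf{T}}B^{\mathsf{T}}A^{\mathsf{T}}$ to obtain $y^{\mathsf{T}}S^{\mathsf{T}}x$. Invoking the skew-symmetry hypothesis $S^{\mathsf{T}} = -S$ (equivalently $S + S^{\mathsf{T}} = \mathbf{0}$, as recorded for skew-adjacency matrices in Section 1) turns this into $y^{\mathsf{T}}(-S)x = -y^{\mathsf{T}}Sx$. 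Chaining the equalities yields $x^{\mathsf{T}}Sy = -y^{\mathsf{T}}Sx$, which is the first assertion.

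For the second assertion I would specialize $y = x$ in the identity just proved, giving $x^{\mathsf{T}}Sx = -x^{\mathsf{T}}Sx$, so that $2\,x^{\mathsf{T}}Sx = 0$; since the entries lie in a field of characteristic zero (we work over the rationals/reals throughout), this forces $x^{\mathsf{T}}Sx = 0$, and trivially $x^{\mathsf{T}}Sy = x^{\mathsf{T}}Sx$ when $x = y$.

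There is no real obstacle here: the statement is a one-line consequence of the transpose identity together with skew-symmetry, and the only point requiring (minimal) care is recognizing that a scalar equals its transpose and tracking the order reversal when transposing the triple product. I would therefore keep the write-up to a couple of displayed lines and not belabor it.
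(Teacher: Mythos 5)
Your proposal is correct and matches the paper's proof exactly: both use that the scalar $x^{\mathsf{T}}Sy$ equals its own transpose, expand via $(ABC)^{\mathsf{T}}=C^{\mathsf{T}}B^{\mathsf{T}}A^{\mathsf{T}}$, apply $S^{\mathsf{T}}=-S$, and then specialize $y=x$ to get $2x^{\mathsf{T}}Sx=0$. No differences worth noting.
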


\begin{proof}
	Since $S$ is a skew-symmetric matrix, we have $x^{\mathsf{T}}Sy=(x^{\mathsf{T}}Sy)^{\mathsf{T}}=y^{\mathsf{T}}S^{\mathsf{T}}x=y^{\mathsf{T}}(-S)x=-y^{\mathsf{T}}Sx$.
	
	If $x=y$, then $x^{\mathsf{T}}Sx=-x^{\mathsf{T}}Sx$, thus we have $2x^{\mathsf{T}}Sx=0$, which implies $x^{\mathsf{T}}Sx=0$.
\end{proof}

\begin{proposition}{\rm(\!\!\cite{DETTRANSI,SEDTOUR})}\label{dettranone}
	Let $\mathbb{T}$ be a transitive tournament of order $n$. Then $\det(\mathbb{T})=1$ if $n$ is even, $\det(\mathbb{T})=0$ if $n$ is odd.
\end{proposition}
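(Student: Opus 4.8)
The plan is to compute $\det(\mathbb{T})$ directly from its skew-adjacency matrix. Since the determinant does not depend on the vertex ordering, order $V(\mathbb{T})$ as $v_1\to v_2\to\cdots\to v_n$, so that $S_{\mathbb{T}}=[s_{ij}]$ has $s_{ij}=1$ for $i<j$, $s_{ij}=-1$ for $i>j$, and $s_{ii}=0$. The odd case is immediate: for odd $n$ any skew-symmetric $S$ obeys $\det(S)=\det(S^{\mathsf{T}})=\det(-S)=(-1)^{n}\det(S)=-\det(S)$, so $\det(\mathbb{T})=0$ (this also follows from the Fisher--Ryan fact recalled in the introduction). Thus the real content is the even case.

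For even $n$ I would evaluate $\det(S_{\mathbb{T}})$ by a sequence of elementary row operations, which leave the determinant unchanged. First, for $i=1,2,\ldots,n-1$ in increasing order, replace row $i$ by (row $i$)$-$(row $i{+}1$); since each step subtracts a row not yet touched, a one-line computation shows the new row $i$ equals $e_i+e_{i+1}$ (the vector with $1$ in coordinates $i$ and $i{+}1$ and $0$ elsewhere) for $1\le i\le n-1$, while row $n$ is still $(-1,-1,\ldots,-1,0)$. Second, add rows $1,\ldots,n-1$ to row $n$; their sum is $(1,2,2,\ldots,2,1)$, so row $n$ becomes $(0,1,1,\ldots,1)=e_2+e_3+\cdots+e_n$. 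Third, subtract from this last row, successively, rows $2,4,6,\ldots,n-2$: each subtraction deletes two further leading $1$'s, and because $n$ is even this process ends with row $n$ equal to $e_n$. At this point rows $1,\ldots,n-1$ are still $e_i+e_{i+1}$ and row $n$ is $e_n$; subtracting row $n$ from row $n-1$, then the new row $n-1$ from row $n-2$, and so on up to row $1$, turns the matrix into the identity. Hence $\det(S_{\mathbb{T}})=1$.

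I do not expect a genuine obstacle here --- the argument is bookkeeping --- but two points deserve care: the first batch of operations must be carried out in increasing order of $i$ so that each is honestly elementary, and the third batch terminates at $e_n$ exactly because $n$ is even (for odd $n$ it would instead terminate at $e_{n-1}$, consistently with $\det=0$). A more conceptual alternative, closer to the Pfaffian language used elsewhere in the paper, is to invoke Cayley's identity $\det(S_{\mathbb{T}})=\operatorname{Pf}(S_{\mathbb{T}})^{2}$ together with the row-expansion $\operatorname{Pf}(S_{\mathbb{T}})=\sum_{j=2}^{n}(-1)^{j}s_{1j}\operatorname{Pf}\!\big(S_{\mathbb{T}}^{(1,j)}\big)$, where $S_{\mathbb{T}}^{(1,j)}$ (delete rows and columns $1$ and $j$) is again the skew-adjacency matrix of a transitive tournament of order $n-2$; by induction each $\operatorname{Pf}(S_{\mathbb{T}}^{(1,j)})=1$, and since $s_{1j}=1$ one gets $\operatorname{Pf}(S_{\mathbb{T}})=\sum_{j=2}^{n}(-1)^{j}=1$ for even $n$, whence $\det(\mathbb{T})=1$.
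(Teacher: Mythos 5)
The paper does not prove this proposition at all --- it is quoted from the cited references --- so there is no in-paper argument to compare against; your proposal must stand on its own, and it does. The odd case via $\det(S)=(-1)^n\det(S)$ is standard and correct. In the even case your row reduction is sound: processing $i=1,\dots,n-1$ in increasing order each step subtracts an as-yet-unmodified row, giving $e_i+e_{i+1}$; the subsequent accumulation into row $n$ yields $e_2+\cdots+e_n$, the alternating subtractions of rows $2,4,\dots,n-2$ reduce it to $e_n$ precisely because $n$ is even, and the final back-substitution produces the identity, so $\det(S_{\mathbb{T}})=1$. Your Pfaffian alternative is also correct (the minor $S_{\mathbb{T}}^{(1,j)}$ is again the skew-adjacency matrix of a transitive tournament in its transitive ordering, so the induction is legitimate and $\sum_{j=2}^{n}(-1)^j=1$ for even $n$), and it is arguably the more natural route given that the paper already recalls Cayley's identity $\det(S)=\operatorname{Pf}(S)^2$ in the introduction. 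Either argument would serve as a self-contained proof of the cited fact.
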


\begin{proposition}{\rm(\!\!\cite{SEDTOUR})}\label{transpose}
	Let $n$ be a positive even integer, $\mathbb{T}$ be a transitive tournament of order $n$, $V(\mathbb{T})=\{v_1,v_2,\ldots,v_n\}$ such that $v_i\rightarrow v_j$ if $i<j$, $S_{\mathbb{T}}$ be the skew-adjacency matrix of $\mathbb{T}$ with respect to the vertex ordering $v_1,v_2,\ldots,v_n$. Then
	
	\[\scriptsize
	S_{\mathbb{T}}^{-1} = 
	\left[
	\begin{array}{rrrrrrrr}
		0 & -1 & 1 & -1 & 1 & \cdots & 1 & -1 \\
		1 & 0 & -1 & 1 & -1 & 1 & & 1 \\
		-1 & 1 & 0 & -1 & 1 &\ddots & \ddots & \vdots \\
		1 & -1 & 1 & 0 & -1 & \ddots & \ddots & 1 \\
		-1 & 1 & -1 & 1 & 0 & \ddots & \ddots & -1 \\
		\vdots & \ddots & \ddots & \ddots & \ddots & \ddots & -1 & 1 \\
		-1 &  & \ddots & \ddots & \ddots & 1 & 0 & -1 \\
		1 & -1 & \cdots & -1 & 1 & -1 & 1 & 0
	\end{array}
	\right].
	\]
\end{proposition}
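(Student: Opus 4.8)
The plan is to produce, in closed form, a candidate inverse and then verify it. Let $B$ denote the matrix displayed in the statement; reading off its entries, $B=[t_{ij}]$ with $t_{ij}=(-1)^{j-i}\operatorname{sgn}(j-i)$, so $B$ is skew-symmetric (as it must be, since the inverse of a skew-symmetric matrix of even order is again skew-symmetric). By Proposition~\ref{dettranone} we have $\det(S_{\mathbb{T}})=1\neq 0$, so it suffices to check the single matrix identity $S_{\mathbb{T}}B=I$. The device I would use is the sign matrix $D=\operatorname{diag}\big((-1)^1,(-1)^2,\ldots,(-1)^n\big)$, which satisfies $D^2=I$; since $(-1)^{j-i}=(-1)^{i+j}$, one has $B=D\,S_{\mathbb{T}}\,D$. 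Hence $S_{\mathbb{T}}B=(S_{\mathbb{T}}D)(S_{\mathbb{T}}D)=(S_{\mathbb{T}}D)^2$, and the whole proposition collapses to the assertion that $S_{\mathbb{T}}D$ is an involution.

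To prove $(S_{\mathbb{T}}D)^2=I$, I would expand the $(i,k)$ entry. Using $(S_{\mathbb{T}})_{ij}=\operatorname{sgn}(j-i)$ one gets
\[
\big((S_{\mathbb{T}}D)^2\big)_{ik}=(-1)^{k}\sum_{j=1}^{n}(-1)^{j}\,\operatorname{sgn}(j-i)\,\operatorname{sgn}(k-j),
\]
and the goal is that this equals $\delta_{ik}$. Split into the cases $i=k$, $i<k$, $i>k$. In each case the factor $\operatorname{sgn}(j-i)\operatorname{sgn}(k-j)$ is a constant in $\{+1,-1\}$ on each of the (at most three) intervals of indices cut out by $j=i$ and $j=k$, and it vanishes at $j=i$ and $j=k$; so the inner sum is a short $\pm$-combination of alternating partial sums $\sum_{j=a}^{b}(-1)^{j}$ over consecutive blocks. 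The only place the hypothesis ``$n$ even'' enters is the identity $\sum_{j=1}^{n}(-1)^{j}=0$: it lets one replace the two ``outer'' blocks by the complementary middle block, after which a one-line parity computation gives $0$ when $i\neq k$ and $1$ when $i=k$.

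An alternative that sidesteps most of this bookkeeping is an induction on the even integer $n$ using the Schur complement (Lemma~\ref{schur}). Peel off the last two rows and columns and write $S_{\mathbb{T}}=\begin{bmatrix}A & C\\ -C^{\mathsf T} & J\end{bmatrix}$, where $A$ is the skew-adjacency matrix of the transitive $(n-2)$-tournament, $C$ is the all-ones $(n-2)\times 2$ matrix, and $J=\begin{bmatrix}0&1\\-1&0\end{bmatrix}$. The crucial point is $(1,1)\,J^{-1}(1,1)^{\mathsf T}=0$, which forces $C J^{-1}C^{\mathsf T}=0$, so the Schur complement of $J$ equals $A$ exactly. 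The standard $2\times2$ block-inversion formula then expresses all four blocks of $S_{\mathbb{T}}^{-1}$ in terms of $A^{-1}$ (given by the induction hypothesis, since the displayed formula $(-1)^{j-i}\operatorname{sgn}(j-i)$ does not depend on $n$), the vector $A^{-1}\mathbf 1$, and the scalar $\mathbf 1^{\mathsf T}A^{-1}\mathbf 1$. Feeding the induction hypothesis into the last two quantities gives $A^{-1}\mathbf 1=\big((-1)^{i}\big)_{i}$ and $\mathbf 1^{\mathsf T}A^{-1}\mathbf 1=\sum_{i=1}^{n-2}(-1)^{i}=0$ (again using $n-2$ even), and substituting shows each block of $S_{\mathbb{T}}^{-1}$ matches the displayed matrix; the base case $n=2$ is immediate.

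Either way the argument is entirely elementary, and I expect the only real obstacle to be organizational: carefully tracking parities and alternating partial sums in the first approach, or spotting the block splitting that collapses the Schur complement to $A$ in the second. There is no conceptual difficulty, and the evenness of $n$ is used for nothing more than forcing a full alternating sum to vanish.
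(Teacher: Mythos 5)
The paper does not prove this proposition at all --- it is imported verbatim from \cite{SEDTOUR} --- so there is no in-paper argument to compare against; what matters is whether your proof stands on its own, and it does. Your key observation that the displayed matrix $B=[(-1)^{i+j}\operatorname{sgn}(j-i)]$ equals $DS_{\mathbb{T}}D$ with $D=\operatorname{diag}((-1)^1,\ldots,(-1)^n)$ is correct, and it reduces the whole statement to $(S_{\mathbb{T}}D)^2=I$; I checked the entrywise sum $(-1)^k\sum_j(-1)^j\operatorname{sgn}(j-i)\operatorname{sgn}(k-j)$ in all three cases ($i=k$, $i<k$, $i>k$) and it does collapse to $\delta_{ik}$ exactly as you describe, with $n$ even entering only through $\sum_{j=1}^n(-1)^j=0$. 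The inductive route also checks out: $CJ^{-1}C^{\mathsf T}=0$ (which in fact follows for free from Lemma \ref{skewproperty}), the Schur complement is exactly $A$, and $A^{-1}\mathbf 1=((-1)^i)_i$, $\mathbf 1^{\mathsf T}A^{-1}\mathbf 1=0$ reproduce the off-diagonal blocks and the corner $J^{-1}$. Two cosmetic points: once you verify $S_{\mathbb{T}}B=I$ for square matrices you do not need the separate appeal to $\det(S_{\mathbb{T}})=1$; and in the second route note that Lemma \ref{schur} as stated in the paper only gives the determinant factorization, so the full $2\times2$ block-inversion formula you invoke would have to be cited or derived separately (it is standard, but it is not literally that lemma).
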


Proposition \ref{toolforS} appears in the proof of \cite[Theorem 3.2]{SEDTOUR}. For the needs of the remainder of this section, we state it here as a conclusion and provide its proof.

\begin{proposition}{\rm(\!\!\cite{SEDTOUR})}\label{toolforS}
	Let $p$ be a positive even integer, $x,y$ be vectors of order $p$, $a$ be a real number, $\mathbb{T}$ be a transitive tournament of order $p$, and $S_{\mathbb{T}}=[s_{ij}]$ be the skew-adjacency matrix of $\mathbb{T}$ such that \[s_{ij}=\begin{cases}
		1, & i<j\\
		0, & i=j\\
		-1, & i>j
	\end{cases},
	\]
	$S$ be a  skew-symmetric matrix of order $p+2$ such that
	\[
	S= \left[ \begin{array}{cc|c}
		0 & a & x^{\mathsf{T}}\\ 
		-a & 0 & y^{\mathsf{T}}\\ \hline
		-x & -y & S_{\mathbb{T}} \end{array} \right].
	\]
	Then we have
	\item{\rm(i)} $\det(S)=(a+x^{\mathsf{T}}S_{\mathbb{T}}^{-1}y)^2$.
	\item{\rm(ii)} If $x^{\mathsf{T}}=
	\left[ \begin{array}{ccccc}
		1 & -1 & 1 & \cdots & -1 \end{array} \right]$ and $y^{\mathsf{T}}=\left[ \begin{array}{ccccc}
		\beta_1 & \beta_2 & \beta_3 & \cdots & \beta_{p} \end{array} \right]$, then
	\[
	\det(S)=(a+\sum_{i=1}^{p}(-1)^i \cdot (p+1-2i)\cdot\beta_i)^2.
	\]
\end{proposition}

\begin{proof}
	Since $S_{\mathbb{T}}$ is a skew-adjacency matrix of $\mathbb{T}$, then $S_{\mathbb{T}}$ is a skew-symmetric matrix, consequently, $S_{\mathbb{T}}^{-1}$ is a skew-symmetric matrix. By Lemma \ref{skewproperty}, we have
	\begin{align}
		\text{$y^{\mathsf{T}}S_{\mathbb{T}}^{-1}x=-x^{\mathsf{T}}S_{\mathbb{T}}^{-1}y$, \, $x^{\mathsf{T}}S_{\mathbb{T}}^{-1}x=y^{\mathsf{T}}S_{\mathbb{T}}^{-1}y=0$.} \label{ak1}
	\end{align} 
	
	By Proposition \ref{dettranone}, we have
	\begin{align}
		\det(S_{\mathbb{T}})=1.\label{ak3}
	\end{align} 
	
	By using Lemma \ref{schur}, (\ref{ak1}) and (\ref{ak3}), we have
	\begin{align}
		\det(S)&=\det(S_{\mathbb{T}})\cdot \det(S/S_{\mathbb{T}})\notag\\
		&=\det(S_{\mathbb{T}})\cdot \det \left( \left[ \begin{array}{cc}
			0 &a \\ -a & 0 \end{array}
		\right] - \left[ \begin{array}{c} x^{\mathsf{T}} \\ y^{\mathsf{T}} 
		\end{array} \right] S_{\mathbb{T}}^{-1} \left[ \begin{array}{cc} 
			-x & -y \end{array} \right] \right) \notag\\
		&=(a+x^{\mathsf{T}} S_{\mathbb{T}}^{-1}y)^2.\notag \label{ak4}
	\end{align}
	
	Therefore, (i) holds.
	
	By Proposition \ref{transpose}, it is easy to check that  \[ 
	x^{\mathsf{T}}S_{\mathbb{T}}^{-1}=
	\left[ \begin{array}{rrrrrrrrr}
		x_1 & x_2 & x_3 & \cdots & x_{\frac{p}{2}} & x_{\frac{p}{2}+1}  & \cdots & x_{p-1} & x_p\end{array} \right],
	\]
	where $x_i=\sum_{i=1}^{p}(-1)^i \cdot (p+1-2i)$. Then we have
	\begin{align}
		x^{\mathsf{T}}S_{\mathbb{T}}^{-1}y=\sum_{i=1}^{p}(-1)^i \cdot (p+1-2i)\cdot\beta_i \label{ak5}
	\end{align}
	
	Combining (i) and (\ref{ak5}), we have
	\begin{align}
		\det(S)&=(a+x^{\mathsf{T}}S_{\mathbb{T}}^{-1}y)^2 \notag \\
		&=(a+\sum_{i=1}^{p}(-1)^i \cdot (p+1-2i)\cdot\beta_i)^2.
	\end{align}
	
	Therefore, (ii) holds.
\end{proof}

\subsection{$Z$-matrix and its properties}\label{subsecLn-table}

\hspace{1.5em}To solve a technical part in the proof of Theorem \ref{evenLnbasicCR}, we define the $Z$-matrix and investigate its properties in this subsection. The core idea is to transform the algebraic problems involved in the proof of Theorem \ref{evenLnbasicCR} into the numerical variation problems on the $Z$-matrix, and solve them by using the properties of $Z$-matrix.

A $Z$-matrix, with respect to a positive odd integer $m\,(\geq 3)$ and a $\{1,-1\}$-sequence $r=(r_1,r_2,\ldots,r_{m})$, is an $m \times (m-1)$ matrix in which every element is an integer, denoted by $Z(m,r)$ and defined as follows.

\begin{definition}\label{defZmatrix}
	Let $m \geq 3$ be a positive odd integer and $r=(r_1,r_2,\ldots,r_{m})$ be a $\{1,-1\}$-sequence. Define the matrix $Z(m,r)=[z_{ij}]_{m \times (m-1)}$ by the following:
	\[
	z_{ij}=\begin{cases}
		(-1)^{i+j}\cdot (m-2j)\cdot r_{i+j}, & \text{if \,$i+j \leq m$};\\
		(-1)^{i+j-m}\cdot (m-2j)\cdot r_{i+j-m}, & \text{if \,$i+j> m$}.
	\end{cases}
	\]
\end{definition}

The $\ell$-antidiagonal vector of $Z(m,r)$ is a vector of order $m$, defined as follows.

\begin{definition}\label{Zmatrixl-diag}
	Let $Z(m,r)=[z_{ij}]$ be a $Z$-matrix, where $r=(r_1,r_2,\ldots,r_m)$. Define the vector $\Gamma_{\ell} \,(\ell \in \{1,2,\ldots,m\})$ by the following:
	\[
	\Gamma_{\ell}=(\gamma^{(\ell)}_1,\ldots,\gamma^{(\ell)}_{m})^{\mathsf{T}}, \quad \text{where $1 \leq \ell \leq m$ and }\gamma^{(\ell)}_i=\begin{cases}
		z_{i(\ell-i)}, & \text{if \,$i< \ell$};\\
		0, & \text{if \,$i=\ell$};\\
		z_{i(m+\ell-i)}, & \text{if \,$i> \ell$}.
	\end{cases}
	\]
	We call $\Gamma_{\ell}$ the $\ell$-antidiagonal vector of $Z(m,r)$.
\end{definition}

Here we provide an example of a $Z$-matrix and show its $\ell$-antidiagonal vectors. Let $m=9$ and the $\{1,-1\}$-sequence $r=(1,1,1,-1,-1,-1,1,-1,-1)$. Then
\begin{align*}
	\scriptsize Z(m,r) =  \left[ \begin{array}{rrrrrrrr}
		7 & -5 & -3 & 1 & 1 & 3 & 5 & -7 \\
		-7 & -5 & 3 & -1 & 1 & 3 & -5 & 7\\
		-7 & 5 & -3 & -1 & 1 & -3 & 5 & -7\\
		7 & -5 & -3 & -1 & -1 & 3 & -5 & 7 \\
		-7 & -5 & -3 & 1 & 1 & -3 & 5 & 7\\
		-7 & -5 & 3 & -1 & -1 & 3 & 5 & -7\\
		-7 & 5 & -3 & 1 & 1 & 3 & -5 & 7\\
		7 & -5 & 3 & -1 & 1 & -3 & 5 & 7\\
		-7 & 5 & -3 & -1 & -1 & 3 & 5 & 7
	\end{array}\right],
\end{align*}
and $\Gamma_{1},\Gamma_{2},\ldots,\Gamma_{n}$ are the first, second, …, 
$n$-th column vectors of the matrix below:
\begin{align*}
\scriptsize	(\Gamma_{1},\Gamma_{2},\ldots,\Gamma_{n})=\left[ \begin{array}{rrrrrrrrr}
		0 & 7 & -5 & -3 & 1 & 1 & 3 & 5 & -7\\
		7 & 0 & -7 & -5 & 3 & -1 & 1 & 3& -5\\
		5 & -7 & 0 & -7 & 5 & -3 & -1 & 1& -3\\
		3 & -5 & 7 & 0 & 7 & -5 & -3 & -1& -1 \\
		1 & -3 & 5 & 7 & 0 & -7 & -5 & -3& 1\\
		-1 & -1 & 3 & 5 & -7 & 0 & -7 & -5& 3\\
		-3 & 1 & 1 & 3 & -5 & 7 & 0 & -7& 5\\
		-5 & 3 & -1 & 1 & -3 & 5 & 7 & 0& 7\\
		-7 & 5 & -3 & -1 & -1 & 3 & 5 & 7& 0
	\end{array}\right]
\end{align*}

\begin{proposition}\label{p1Zmatrix}
	Let $r=(r_1,\ldots,r_m)$, $Z(m,r)=[z_{ij}]$ be a $Z$-matrix, $\Gamma_{\ell}=(\gamma^{(\ell)}_1,\ldots,\gamma^{(\ell)}_{m})^{\mathsf{T}}$ $(\ell \in \{1,2,\ldots,m\})$ be the $\ell$-antidiagonal vectors of $Z(m,r)$ and the $m \times m$ matrix $\Gamma=(\Gamma_1,\ldots,\Gamma_{m})$.
	\item{\rm(i)} Let $J_{m-1}$ and $J_{m}$ be the all-ones vectors. Then\[Z(m,r)\cdot J_{m-1}= \Gamma\cdot J_{m}.\]
	\item{\rm(ii)} For $i \in \{1,2,\ldots,m-1\} \backslash \{\ell -1, \ell\}$, we have
		\begin{align*}
		\gamma^{(\ell)}_{i+1}-\gamma^{(\ell)}_{i}=2\cdot (-1)^{\ell}\cdot r_{\ell}.
	\end{align*}
\end{proposition}

\begin{proof}
	Let $b_i$ be the $i$-th element of $Z(m,r)\cdot J_{m-1}$ and $c_i$ be the $i$-th element of $\Gamma\cdot J_{m}$. Now we show $b_i=c_i$ by the following computation.
	\begin{align}
		c_i&=\sum^{m}_{\ell=1}\gamma^{(\ell)}_{i}=\sum_{\ell<i}z_{i(m+\ell-i)}+0+\sum_{\ell>i}z_{i(\ell-i)} \notag \\
		&=z_{i(m+1-i)}+\cdots+z_{i(m-1)}+z_{i1}+\cdots+z_{i(m-i)}=\sum_{1\leq \ell \leq m-1}z_{i\ell}=b_i. \label{jm}
	\end{align}
	
	This completes the proof of {\rm(i)}.
	
	When $1 \leq i \leq \ell-2$, we have
	\begin{align*}
		\gamma^{(\ell)}_{i+1}-\gamma^{(\ell)}_{i}&=z_{(i+1)(\ell-i-1)}-z_{i(\ell-i)}\\
		&=(-1)^{\ell}\cdot (m-2\ell+2i+2)\cdot r_{\ell}-(-1)^{\ell}\cdot (m-2\ell+2i)\cdot r_{\ell}\\
		&=2\cdot (-1)^{\ell}\cdot r_{\ell}.
	\end{align*}
	
	When $\ell+1 \leq i \leq m-1$, we have
	\begin{align*}
		\gamma^{(\ell)}_{i+1}-\gamma^{(\ell)}_{i}&=	z_{(i+1)(m+\ell-i-1)}-z_{i(m+\ell-i)}\\
		&=(-1)^{\ell}\cdot (-m-2\ell+2i+2)\cdot r_{\ell}-(-1)^{\ell}\cdot (-m-2\ell+2i)\cdot r_{\ell}\\
		&=2\cdot (-1)^{\ell}\cdot r_{\ell}.
	\end{align*}
	
	This completes the proof of {\rm(ii)}.
\end{proof}

For convenience, we denote $\Delta(\Gamma_{\ell})=2\cdot (-1)^{\ell}\cdot r_{\ell}$ for $1 \leq \ell \leq m$. Clearly, $\Delta(\Gamma_{\ell}) \in \{2,-2\}$, and when $r=(r_1,r_2,\ldots,r_m)$ is given, the value of $\Delta(\Gamma_{\ell})$ depends only on $\ell$.

Similar to the notation in Definition \ref{defpsi}, we can uniquely represent the $\{1,-1\}$-sequence $(r_1, r_2, \ldots, r_n)$ by $(\alpha_1, \ldots, \alpha_t)$, where the representation is a bijection and $|\alpha_1| + |\alpha_2| + \cdots + |\alpha_t| = n$. For example, if $\sigma=(1,1,-1,-1,-1,1,1)$, then we can use $(2,-3,2)$ to represent $\sigma$ (vice versa). For convenience, in the following (including the next subsection), we will use $(\alpha_1,\ldots,\alpha_t)$ to represent a $\{1,-1\}$-sequence $(r_1,r_2,\ldots,r_m)$.

\begin{theorem}\label{thm1Zmatrix}
	Let $r=(r_1,r_2,\ldots,r_m)=(\alpha_1,\alpha_2,\ldots,\alpha_t)$, $Z(m,r)=[z_{ij}]$ be a $Z$-matrix, $\Gamma_{\ell}=(\gamma^{(\ell)}_1,\ldots,\gamma^{(\ell)}_{m})^{\mathsf{T}}$ ($1 \leq \ell \leq m$) be the $\ell$-antidiagonal vectors of $Z(m,r)$, $Z(m,r)\cdot J_{m-1}=(b_1,b_2,\ldots,b_m)$, $\mathcal{A}=\{|\alpha_1|,|\alpha_1|+|\alpha_2|,\ldots,|\alpha_1|+\cdots+|\alpha_{t-1}|\}$ and $\Delta=\sum\limits_{j=1}^{m} \Delta(\Gamma_j)$. Then we have 
	\item{\rm(i)} For $1\leq i \leq m-1$, 
\begin{align}
		b_{i+1}-b_{i}=\begin{cases}
			\Delta, & \text{if \,$i \notin \mathcal{A}$};\\
			\Delta-2m\cdot (-1)^{i}r_{i}, & \text{if \,$i \in \mathcal{A}$}.
		\end{cases} \label{difference}
	\end{align}
	\item{\rm(ii)} If $|\alpha_{d_2}|,\ldots,|\alpha_{d_s}|$ are all odd numbers among $|\alpha_{1}|,|\alpha_{2}|,\ldots,|\alpha_{t}|$ with $1 \leq d_1 < d_2 < \cdots < d_s \leq t$, then 
	\[\Delta=2\sum_{i=1}^{s} (-1)^{d_i+(i-1)} \cdot r_1.\]
\end{theorem}

\begin{proof}
	By {\rm(i)} of Proposition \ref{p1Zmatrix}, we have $b_i=\sum\limits_{j=1}^{m} \gamma^{(j)}_i$. By {\rm(ii)} Proposition \ref{p1Zmatrix} and $\gamma^{(i)}_{i}=\gamma^{(i+1)}_{i+1}=0$, we have
	\begin{align}
		b_{i+1}-b_{i}&=\sum_{j=1}^{m} \gamma^{(j)}_{i+1}-\sum_{j=1}^{m} \gamma^{(j)}_i \notag\\ 
		&=\sum_{j=1}^{i-1} (\gamma^{(j)}_{i+1}-\gamma^{(j)}_i)+\gamma^{(i)}_{i+1}-\gamma^{(i+1)}_{i}+\sum_{j=i+2}^{m} (\gamma^{(j)}_{i+1}-\gamma^{(j)}_i) \notag\\
		&=\sum_{j=1}^{i-1} \Delta(\Gamma_{j})+z_{(i+1)(m-1)}-z_{i1}+\sum_{j=i+2}^{m} \Delta(\Gamma_{j}) \notag\\
		&=\sum_{j=1}^{i-1} \Delta(\Gamma_{j})+(-1)^{i}(2-m)\cdot r_{i}-(-1)^{i+1}(m-2)\cdot r_{i+1}+\sum_{j=i+2}^{m} \Delta(\Gamma_{j}) \notag\\
		&=\sum_{j=1}^{i-1} \Delta(\Gamma_{j})-(m-2)\cdot ((-1)^{i}r_{i}+(-1)^{i+1}r_{i+1})+\sum_{j=i+2}^{m} \Delta(\Gamma_{j}).\label{bidiff}
	\end{align}
	
	We note that $\sum\limits_{j=i+2}^{m} \Delta(\Gamma_{j})=0$ if $i=m-1$ in \eqref{bidiff}. Now we show \eqref{difference} holds by the following two cases.
	
	\textbf{Case 1}: $i \notin \mathcal{A}$.
	
	In this case, $r_i \cdot r_{i+1}=1$, $(-1)^{i}r_i+(-1)^{i+1}r_{i+1}=0$, and $\Delta(\Gamma_{i})+\Delta(\Gamma_{i+1})=0$. By \eqref{bidiff}, we have $b_{i+1}-b_{i}=\Delta$.

	\textbf{Case 2}: $i \in \mathcal{A}$.
	
	In this case, $r_i \cdot r_{i+1}=-1$, $(-1)^{i}r_i+(-1)^{i+1}r_{i+1}=2\cdot(-1)^{i}r_i$, and $\Delta(\Gamma_{i})+\Delta(\Gamma_{i+1})=4\cdot(-1)^{i}r_{i}$. By \eqref{bidiff}, we have $b_{i+1}-b_{i}=\Delta-2m(-1)^{i}r_{i}$.
	
	This completes the proof of (i).
	
		By $\Delta(\Gamma_{\ell})=2\cdot (-1)^{\ell}\cdot r_{\ell}$, we have
	\begin{align*}
		\Delta&=\sum_{i=1}^{m} \Delta(\Gamma_i)=2\sum_{i=1}^{m} (-1)^{i}\cdot r_{i}\\
		&=2 \left( \sum_{i=1}^{|\alpha_1|} (-1)^{i}\cdot r_{i}+\sum_{i=|\alpha_1|+1}^{|\alpha_1|+|\alpha_2|} (-1)^{i}\cdot r_{i}+\cdots+\sum_{i=|\alpha_1|+\cdots+|\alpha_{t-1}|+1}^{|\alpha_1|+\cdots+|\alpha_{t-1}|+|\alpha_t|} (-1)^{i}\cdot r_{i} \right)\\
		&=2 \sum_{i=1}^{s}\left(\sum_{j=|\alpha_{1}|+\cdots+|\alpha_{d_i-1}|+1}^{|\alpha_{1}|+\cdots+|\alpha_{d_i-1}|+|\alpha_{d_i}|}(-1)^{j}\cdot r_{j} \right)\\
		&=2\sum_{i=1}^{s}(-1)^{|\alpha_{1}|+\cdots+|\alpha_{d_i-1}|+1}\cdot r_{|\alpha_{1}|+\cdots+|\alpha_{d_i-1}|+1}\\
		&=2\sum_{i=1}^{s}(-1)^{|\alpha_{1}|+\cdots+|\alpha_{d_i-1}|+1}\cdot (-1)^{d_i-1} \cdot r_1\\
		&=2\sum_{i=1}^{s}(-1)^{i-1+1}\cdot (-1)^{d_i-1} \cdot r_1\\
		&=2\sum_{i=1}^{s} (-1)^{d_i+(i-1)} \cdot r_1.
	\end{align*}
	
	This completes the proof of (ii).
\end{proof}

\subsection{Proofs of Theorems \ref{evenLnbasicCR} and \ref{allLnstrongCR}}\label{subsecLn-pfeven}
\begin{proof}[{\bf Proof of Theorems \ref{evenLnbasicCR}}] Since $L_4$ and $L_6$ are basic strong CR tournament by Proposition \ref{L246}, we only need to prove that $L_n$ is a basic strong CR tournament for even $n \geq 8$. Hence, we assume that $n\geq 8$ in the following.
	
	Let $V(L_n)=\{v_1,v_2,\ldots,v_{n-1},v_n\}$ and $X=\{v_1,v_2,\ldots,v_{n-1}\}$, where $L_n[X]$ is transitive with $v_1\rightarrow v_2\rightarrow \cdots \rightarrow v_{n-1}$ and $\psi_{L_n}(v_n,X)=((-1)^0,(-1)^1,\ldots,(-1)^{n-2})$. By Lemma \ref{Lnbasic}, $L_n$ is a basic tournament. By Lemma \ref{Lnstrong}, if $L_n$ is a CR tournament, then $L_n$ is a strong CR tournament. Therefore, we only need to prove that $L_n$ is a CR tournament.
	
	Let $u$ be a non-CR vertex for $L_n$ with the dominating relation $\sigma=(r_1,r_2,\ldots,r_n)$, where $r_i=\theta_{L_n(u,\sigma)}(u,v_i)$. We only need show $L_n(u,\sigma) \notin \mathcal{D}_{n-1}$ by Definition \ref{defCR} and $L_n \in \mathcal{D}_{n-1} \backslash \mathcal{D}_{n-3}$.
	
	By Lemma \ref{CRveforLneven}, $\psi_{L_n(u,\sigma)}(u,X)=(\alpha_1,\ldots,\alpha_t)$ satisfy $3 \leq t \leq n-2$. Let $X(i,\alpha_i)$ ($i=1,2,\ldots,t$) denote the vertex subset of $X$ corresponding to $\alpha_i$.
	
	If $\alpha_1<0$, then there exists a switch of $L_n(u,\sigma)$ with respect to $W=\{u\}$, denoted by $L'_n(u,\sigma)$, such that $\psi_{L'_n(u,\sigma)}(u,X)=(-\alpha_1,\ldots,-\alpha_t)$. By Corollary \ref{switchDk}, if $L'_n(u,\sigma) \notin \mathcal{D}_{n-1}$, then $L_n(u,\sigma) \notin \mathcal{D}_{n-1}$. Therefore, without loss of generality, we can assume that $\alpha_1>0$.
	
	Let $S$ be the skew-adjacency matrix of $L_n(u,\sigma)$ with respect to the vertex ordering $v_n,u,v_1,\ldots,v_{n-1}$, i.e., 
	\begin{align}
		S =  \left[ \begin{array}{rrrrrrr}
			0 & a & 1 & -1 & \cdots & -1 & 1 \\
			-a & 0 & r_1 & r_2 & \cdots & r_{n-2} & r_{n-1}\\
			-1 & -r_1 & 0 & 1 & \cdots & 1 & 1\\
			1 & -r_2 & -1 & 0 & \cdots & 1 & 1\\
			\vdots &\vdots & \vdots & \vdots & \ddots & \vdots & \vdots \\
			1 & -r_{n-2} & -1 & -1 & \cdots & 0 & 1\\
			-1 & -r_{n-1} & -1 & -1 & \cdots & -1 & 0
		\end{array}\right], \label{detS}
	\end{align}
	where $a=\theta_{L_n(u,\sigma)}(v_n,u)=-r_n$.
	
	Let $m=n-1$. Then the $\{1,-1\}$-sequence $r=(r_1,r_2,\ldots,r_{n-1})=(\alpha_1,\ldots,\alpha_t)$, $Z(m,r)$ be the $Z$-matrix as in Section \ref{subsecLn-table}, $Z(m,r) \cdot J_{m-1}=(b_1,b_2,\ldots,b_m)^{\mathsf{T}}$. For convenience, we use $L_n(u,\sigma,v_i)$ to denote $L_n(u,\sigma)[(V(L_n) \cup \{u\}) \backslash \{v_i\}]$ for $1 \leq i \leq n-1$.
	
	\textbf{Claim 1:} $\det(L_n(u,\sigma,v_i))=(a+b_i)^2$ holds for $1 \leq i \leq n-1$.
	
	{\bf Proof of Claim 1:} Let $W \subseteq V(L_n)$ be defined as
	\begin{align*}
		W=\begin{cases}
			\{v_n\}, & \text{if \,$i=1$};\\
			\{v_1,v_2,\ldots,v_{i-1},v_n\}, & \text{if $2 \leq i \leq n-1$ and $i$ is odd};\\
			\{v_1,v_2,\ldots,v_{i-1}\}, &\text{if $2 \leq i \leq n-1$ and $i$ is even}.
		\end{cases}
	\end{align*}
	Then $L_n(u,\sigma,v_i)$ is switching equivalent to $L^{*}_n(u,\sigma,v_i)$ with respect to $W$ such that $\theta_{L^{*}_n(u,\sigma,v_i)}(v_n,u)=-a$ and $v_2 \rightarrow \cdots \rightarrow v_{n-1}$ if $i=1$; $\theta_{L^{*}_n(u,\sigma,v_i)}(v_n,u)=(-1)^{i}a$ and $v_{i+1} \rightarrow \cdots \rightarrow v_{n-1} \rightarrow v_1 \rightarrow \cdots \rightarrow v_{i-1}$ if $2 \leq i \leq n-1$. Moreover, $\psi_{L^{*}_n(u,\sigma,v_i)}(v_n,X \backslash \{v_i\})=((-1)^0,(-1)^1,\ldots,(-1)^{n-3})$ always holds. 
	
	When $i=1$, let $S^{*}(1)$ be the skew-adjacency matrix of $L^{*}_n(u,\sigma,v_1)$ with respect to $v_n,u,v_2,\ldots,v_{n-1}$. Then we have
	\begin{align}
		S^{*}(1) =  \left[ \begin{array}{rrrrrrr}
			0 & -a & 1 & -1 & \cdots & 1 & -1 \\
			a & 0 & r_2 & r_3 & \cdots & r_{n-2} & r_{n-1}\\
			-1 & -r_2 & 0 & 1 & \cdots & 1 & 1\\
			1 & -r_3 & -1 & 0 & \cdots & 1 & 1\\
			\vdots &\vdots & \vdots & \vdots & \ddots & \vdots & \vdots \\
			-1 & -r_{n-2} & -1 & -1 & \cdots & 0 & 1\\
			1 & -r_{n-1} & -1 & -1 & \cdots & -1 & 0
		\end{array}\right]_{n \times n}.
	\end{align} 
	
	When $2 \leq i \leq n-1$, let $S^{*}(i)$ be the skew-adjacency matrix of $L^{*}_n(u,\sigma,v_i)$ with respect to $v_n,u,v_{i+1},\ldots,v_{n-1}, v_1, \ldots, v_{i-1}$ ($v_n,u,v_1,\ldots,v_{n-1}$ if $i=n-1$). Then we have
	\begin{align}
		S^{*}(i) =  \left[ \begin{array}{rrrrrrrrr}
			0 & (-1)^{i}a & 1 & -1 & \cdots & (-1)^{n-i-2} & (-1)^{n-i-1} & \cdots & -1 \\
			-(-1)^{i}a & 0 & r_{i+1} & r_{i+2} & \cdots & r_{n-1} & -r_1 & \cdots & -r_{i-1}\\
			-1 & -r_{i+1} & 0 & 1 & \cdots & 1 & 1 & \cdots & 1\\
			1 & -r_{i+2} & -1 & 0 & \cdots & 1 & 1 & \cdots & 1\\
			\vdots &\vdots & \vdots & \vdots & \ddots & \vdots & \vdots & & \vdots\\
			-(-1)^{n-i-2} & -r_{n-1} & -1 & -1 & \cdots & 0 & 1 & \cdots & 1\\
			-(-1)^{n-i-1} & r_{1} & -1 & -1 & \cdots & -1 & 0 & \cdots & 1\\
			\vdots & \vdots & \vdots & \vdots &  & \vdots & \vdots & \ddots & \vdots\\
			1 & r_{i-1} & -1 & -1 & \cdots & -1 & -1 & \cdots & 0
		\end{array}\right].
	\end{align} 
	
	Clearly, $\det(S^{*}(i))=\det(L^{*}_n(u,\sigma,v_i))$=$\det(L_n(u,\sigma,v_i))$ for $1 \leq i \leq n-1$.
	
	By Proposition \ref{toolforS} and Definition \ref{defZmatrix}, for $1 \leq i \leq n-1$, we have
	\begin{align*}
		\det(S^{*}(i))&=\left((-1)^{i}a+\sum_{j=1}^{m-i}(-1)^j \cdot (m-2j) \cdot r_{i+j}+\sum_{j=m-i+1}^{m-1}(-1)^j \cdot (m-2j) \cdot (-r_{i+j-m})\right)^2\\
		&=\left(a+\sum_{j=1}^{m-i}(-1)^{i+j} \cdot (m-2j) \cdot r_{i+j}+\sum_{j=m-i+1}^{m-1}(-1)^{i+j} \cdot (m-2j) \cdot (-r_{i+j-m})\right)^2\\
		&=(a+\sum_{j=1}^{m-1}z_{ij})^{2}=(a+b_i)^2.
	\end{align*}
	
	This completes the proof of \textbf{Claim 1}.
	
	\textbf{Claim 2:} If $|b_i - b_j| \geq 2m+2$, then there exists $k \in \{i,j\}$ such that $\det(L_n(u,\sigma,v_k))> m^2=(n-1)^2$.
	
	{\bf Proof of Claim 2:} By Claim 1, $\det(L_n(u,\sigma,v_k))=(a+b_k)^2$ for $k \in \{i,j\}$. If $\det(L_n(u,\sigma,v_i)) \leq m^2$, then $|a+b_i| \leq m$ and we have
	\begin{align*}
		|a+b_j|&=|a+b_i+b_j-b_i| \geq |b_j-b_i|-|b_i+a| \geq |b_j-b_i|-m \geq m+2,
	\end{align*}
	it follows that $\det(L_n(u,\sigma,v_j))=(a+b_j)^2 > m^2=(n-1)^2$.
	
	This completes the proof of \textbf{Claim 2}.
	
	\textbf{Claim 3:} If $|\alpha_i|$ is even for some $2 \leq i \leq t-1$, then there exists $j$ such that $\det(L_n(u,\sigma,v_j)) > (n-1)^2$.
	
	{\bf Proof of Claim 3:}
	Let $|\alpha_{d_1}|, |\alpha_{d_2}|,\ldots,|\alpha_{d_s}|$ be all odd numbers among $|\alpha_{1}|,|\alpha_{2}|,\ldots,|\alpha_{t}|$, and $1 \leq d_1 < d_2 < \cdots < d_s \leq t$. Since $|\alpha_1| + |\alpha_2| +\cdots + |\alpha_t| =n-1$ is odd, $s$ is odd. By Theorem \ref{thm1Zmatrix}, $\Delta=\sum\limits_{k=1}^{m} \Delta(\Gamma_k)=2\sum\limits_{k=1}^{s} (-1)^{d_k+(k-1)} \cdot r_1 \neq 0$. Then $|\Delta| \geq 2$.
	
	If there exits $2 \leq i \leq t-1$ such that $|\alpha_i|$ is even, let $\Delta_1=b_{|\alpha_1|+\cdots+|\alpha_{i-1}|+1}-b_{|\alpha_1|+\cdots+|\alpha_{i-1}|}$ and $\Delta_2=b_{|\alpha_1|+\cdots+|\alpha_{i}|+1}-b_{|\alpha_1|+\cdots+|\alpha_{i}|}$. By Theorem \ref{thm1Zmatrix}, we have 
	\begin{align}
		\Delta_1=\begin{cases}
			\Delta+2m, & \text{if $(-1)^{|\alpha_1|+\cdots+|\alpha_{i-1}|}r_{|\alpha_1|+\cdots+|\alpha_{i-1}|}=-1$};\\
			\Delta-2m, &\text{if $(-1)^{|\alpha_1|+\cdots+|\alpha_{i-1}|}r_{|\alpha_1|+\cdots+|\alpha_{i-1}|}=1$},
		\end{cases} \label{delta1}
	\end{align}
	and
	\begin{align}
		\Delta_2=\begin{cases}
			\Delta+2m, & \text{if $(-1)^{|\alpha_1|+\cdots+|\alpha_{i}|}r_{|\alpha_1|+\cdots+|\alpha_{i}|}=-1$};\\
			\Delta-2m, &\text{if $(-1)^{|\alpha_1|+\cdots+|\alpha_{i}|}r_{|\alpha_1|+\cdots+|\alpha_{i}|}=1$}.
		\end{cases} \label{delta2}
	\end{align}
	
	Since $|\alpha_{i}|$ is even, we have
	\begin{align*}
		(-1)^{|\alpha_1|+\cdots+|\alpha_{i-1}|}r_{|\alpha_1|+\cdots+|\alpha_{i-1}|} \cdot (-1)^{|\alpha_1|+\cdots+|\alpha_{i}|}r_{|\alpha_1|+\cdots+|\alpha_{i}|}=-(r_{|\alpha_1|+\cdots+|\alpha_{i-1}|})^{2}=-1.
	\end{align*}
	
	Therefore, by \eqref{delta1} and \eqref{delta2}, we have $\{\Delta_1,\Delta_2\}=\{\Delta+2m,\Delta-2m\}$. Since $|\Delta| \geq 2$, we have $|\Delta_1| \geq 2m+2$ or $|\Delta_2| \geq 2m+2$. Then by Claim 2, there exists $j \in \{|\alpha_1|+\cdots+|\alpha_{i-1}|, |\alpha_1|+\cdots+|\alpha_{i-1}|+1\}$ or $j \in \{|\alpha_1|+\cdots+|\alpha_{i}|, |\alpha_1|+\cdots+|\alpha_{i}|+1\}$ such that $\det(L_n(u,\sigma,v_j))> m^2=(n-1)^2$.
	
	This completes the proof of \textbf{Claim 3}.
	
	Now we complete the remaining proof by showing $L_n(u,\sigma) \notin \mathcal{D}_{n-1}$ from the following two cases.
	
	\textbf{Case 1:} $t$ is odd.
	
	\textbf{Subcase 1.1:} There exists $i$ ($1 \leq i \leq t$) such that $|\alpha_i|$ is even.
	
	If there exits $2 \leq i \leq t-1$ such that $|\alpha_i|$ is even, then $L_n(u,\sigma) \notin \mathcal{D}_{n-1}$ by Claim 3. So we only consider the case that $|\alpha_i|$ is even for $i \in \{1,t\}$ and $|\alpha_k|$ is odd for all $2 \leq k \leq t-1$.
	
	Since $t$ is odd and $|\alpha_1| + \cdots + |\alpha_t|=n-1$ is odd, $|\alpha_1|$ and $|\alpha_t|$ must are even. Moreover, $\alpha_1 \alpha_t>0$. Note that $\alpha_1 > 0$ (by assumption), then $\alpha_t >0$. Let $W=X(t,\alpha_t) \cup \{u\}$. Then $L_n(u,\sigma)$ is switching equivalent to $L^{*}_n(u,\sigma)$ with respect to $W$ such that $X(t,\alpha_t) \rightarrow X(1,\alpha_1) \rightarrow \cdots \rightarrow X(t-1,\alpha_{t-1})$ in $L^{*}_{n}(u,\sigma)$, $\psi_{L^{*}_{n}(u,\sigma)}(v_n,X)=((-1)^{0},(-1)^{1},\ldots,(-1)^{n-2})$ and $\psi_{L^{*}_{n}(u,\sigma)}(u,X)=(\beta_1,\beta_2,\ldots,\beta_t)=(\alpha_{t},-\alpha_{1},\ldots,-\alpha_{t-1})$. Then $\beta_1 = \alpha_t >0$ and $|\beta_2|=|\alpha_1|$ is even.
	
	Let 
	\[
	v^{*}_{k}=\begin{cases}
		v_{|\alpha_1|+\cdots+|\alpha_{t-1}|+k}, & \text{if $1 \leq k \leq |\alpha_{t}|$};\\
		v_{k-|\alpha_t|}, & \text{if $k > |\alpha_{t}|$}.
	\end{cases}
	\]
	Then $v^{*}_{1} \rightarrow v^{*}_{2} \rightarrow \cdots \rightarrow v^{*}_{n-1}$ in $L^{*}_{n}(u,\sigma)$.
	
	By the proof of Claim $3$ and $|\beta_2|$ is even, there exists $j' \in \{|\beta_1|,|\beta_1|+1\}$ or $j' \in \{|\beta_1|+|\beta_2|,|\beta_1|+|\beta_2|+1\}$ such that $\det(L^{*}_n(u,\sigma,v^{*}_{j'})) > (n-1)^2$. Let $v_{j}=v^{*}_{j'}$. Since $L_{n}(u,\sigma)$ and $L^{*}_{n}(u,\sigma)$ are switching equivalent, we have $\det(L_n(u,\sigma,v_{j}))=\det(L^{*}_n(u,\sigma,v^{*}_{j'}))>(n-1)^2$, and then $L_n(u,\sigma) \notin \mathcal{D}_{n-1}$.
	
	\textbf{Subcase 1.2:} All $|\alpha_i|$ are odd, and $|\alpha_1|=|\alpha_2|=\cdots=|\alpha_t|=\frac{n-1}{t}=\frac{m}{t}$.
	
	Clearly, $m$ is not a prime number by $t \mid m$, and $\frac{m}{t} \geq 3$ by the facts that $3 \leq t \leq m-1$ and $\frac{m}{t}=|\alpha_1|$ is odd. By the assumption that $n=m+1 \geq 8$, we have $m=9$ or $m \geq 15$. 
	
	\textbf{Subcase 1.2.1:} $m = 9$.
	
	By direct computation and \eqref{detS}, we have
	\begin{align*}
		\det(L_n(u,\sigma)[V_1])=121>m^2=81, \text{where } V_1=\begin{cases}
			\{v_1,v_2,v_3,v_5,v_8,v_9,v_{10},u\}, & \text{if $a=1$};\\
			\{v_1,v_2,v_5,v_7,v_8,v_9,v_{10},u\}, & \text{if $a=-1$}.\\
		\end{cases}
	\end{align*}
	
	Therefore, $L_n(u,\sigma) \notin \mathcal{D}_{n-1}$.
	
	\textbf{Subcase 1.2.2:} $m \geq 15$.
	
	Let $q=\frac{m}{t}$ ($\geq 3$), $X_1=\{v_1,v_2,v_3,\ldots,v_{|\alpha_1|+1}\} \backslash \{v_2\}$, $Y=X \backslash X_1$, $Z=Y \cup \{v_n,u\}$ and $W=\{v_n\}$. For simplicity, we denote $L_n(u,\sigma)[Z]$ by $L_n(u,\sigma,Z)$.
	
	Let $L^{*}_n(u,\sigma,Z)$ be a switch of $L_n(u,\sigma,Z)$ with respect to $W$. Then $L^{*}_n(u,\sigma,Z)[Y]$ is transitive with $v_2 \rightarrow v_{q+2}  \rightarrow v_{q+3} \rightarrow \cdots \rightarrow v_{tq}=v_{n-1}$, $\psi_{L^{*}_n(u,\sigma,Z)}(v_n,Y)=((-1)^{0},\ldots,$ $(-1)^{q(t-1)-1})$, $\psi_{L^{*}_n(u,\sigma,Z)}(u,Y)=(1, (-1)^{1}(q-1),(-1)^2q,\ldots,(-1)^{t-1}q)=(y_1,y_2,\ldots,y_{q(t-1)})$, where $(y_1,y_2,\ldots,y_{q(t-1)})$ is a $\{1,-1\}$-sequence, and $\theta_{L^{*}_n(u,\sigma,Z)}(v_n,u)=a^{*}=-\theta_{L_n(u,\sigma)}(v_n,u)=-a$.
	
	Let $S^{*}_{Z}$ be the skew-adjacency matrix of $L^{*}_n(u,\sigma,Z)$ with respect to the vertex ordering $v_n,u,v_2,v_{q+2},\ldots,v_{n-1}$. Note that $y_{i} = -y_{q(t-1)-(i-1)}$ for $2 \leq i \leq \frac{q(t-1)}{2}$. By Proposition \ref{toolforS}, we have
	\begin{align*}
		\det(S^{*}_{Z})&=\left(a^{*}+\sum_{i=1}^{q(t-1)}(-1)^i \cdot (q(t-1)+1-2i)\cdot y_i\right)^2\\
		&=\left(a^{*}-2(qt-q-1)+\sum_{i=2}^{q(t-1)-1}(-1)^i \cdot (q(t-1)+1-2i)\cdot y_i \right)^2\\
		&=\left( a^{*}- 2(qt-q-1) \right)^2=\left( 2m -\frac{2m}{t} -2 +a\right)^2.
	\end{align*}
	
	Since $m \geq 15$ and $t \geq 3$, we have
	\begin{align*}
		2m -\frac{2m}{t} -2 + a \geq 2m(1-\frac{1}{t})-3 \geq \frac{4}{3}m-3 \geq m+2.
	\end{align*}
	Then $\det(S^{*}_{Z}) =\left( 2m -\frac{2m}{t} -2 +a\right)^2 \geq (m+2)^2 > m^2=(n-1)^2$, which implies that $L^{*}_n(u,\sigma) \notin \mathcal{D}_{n-1}$ and thus $L_n(u,\sigma) \notin \mathcal{D}_{n-1}$ by Corollary \ref{switchDk}.
	
	\textbf{Subcase 1.3:} All $|\alpha_i|$ are odd, and $\max\{|\alpha_1|,\ldots,|\alpha_t|\} > \min\{|\alpha_1|,\ldots,|\alpha_t|\}$.
	
	Since all $|\alpha_i|$ are odd, we have $s=t$ and $d_i=i$ for $1 \leq i \leq t$ in (ii) of Theorem \ref{thm1Zmatrix}, and $\Delta=2\sum\limits_{i=1}^{s} (-1)^{d_i+(i-1)} \cdot r_1=-2t$ by (ii) of Theorem \ref{thm1Zmatrix}. Let $|\alpha_{j}|=\max\{|\alpha_1|,\ldots,|\alpha_t|\}$ (if there is more than one such index $j$, choose any one of them). Then $|\alpha_{j}|=[\frac{m}{t}]+d$ with $d \geq 1$. Since $|\alpha_j|$ is odd, we have $|\alpha_{j}| \geq 3$.
	
	\textbf{Subcase 1.3.1:} $d \geq 2$.
	
	Since $d \geq 2$, we have $|\alpha_{j}|-1=[\frac{m}{t}]+d-1 \geq [\frac{m}{t}]+1 > \frac{m}{t}$. Then by Theorem \ref{thm1Zmatrix} and $\Delta=-2t$, we have
	\begin{align*}
		&|b_{|\alpha_1|+\cdots+|\alpha_{j}|}-b_{|\alpha_1|+\cdots+|\alpha_{j-1}|+1}|\\
		=&|(b_{|\alpha_1|+\cdots+|\alpha_{j}|}-b_{|\alpha_1|+\cdots+|\alpha_{j}|-1})+ \cdots +(b_{|\alpha_1|+\cdots+|\alpha_{j-1}|+2}-b_{|\alpha_1|+\cdots+|\alpha_{j-1}|+1})|\\
		=&|\Delta+\cdots+\Delta|=|(|\alpha_{j}|-1)\Delta|>|\frac{m}{t} \Delta|=2m.
	\end{align*}
	
	Since $|(|\alpha_{j}|-1)\Delta|$ is even, we have $|b_{|\alpha_1|+\cdots+|\alpha_{j}|}-b_{|\alpha_1|+\cdots+|\alpha_{j-1}|+1}| \geq 2m+2$. Then by Claim 2, there exists $k \in \{|\alpha_1|+\cdots+|\alpha_{j}|,|\alpha_1|+\cdots+|\alpha_{j-1}|+1\}$ such that $\det(L_n(u,\sigma,v_k))> m^2=(n-1)^2$. Therefore, $L_n(u,\sigma) \notin \mathcal{D}_{n-1}$.
	
	\textbf{Subcase 1.3.2:} $d=1$.
	
	Since $|\alpha_j|$ is odd and $d=1$, $[\frac{m}{t}]$ is even. Let $m=2qt+p$, where $1 \leq p < t$. Choose $i^{*}$ so that $|\alpha_{i^{*}}|=\min\{|\alpha_1|,\ldots,|\alpha_t|\}=h$ (if there is more than one such index $i^{*}$, choose any one of them). Clearly, $h \leq 2q-1$.
	
	\textbf{Subcase 1.3.2.1:} $i^{*}=2$.
	
	By Theorem \ref{thm1Zmatrix}, we have
	\begin{align}
		&|b_{|\alpha_1|+|\alpha_{2}|+1}-b_{|\alpha_1|}| \notag \\
		=&|(b_{|\alpha_1|+|\alpha_{2}|+1}-b_{|\alpha_1|+|\alpha_{2}|})+ (b_{|\alpha_1|+|\alpha_{2}|}-b_{|\alpha_1|+|\alpha_{2}|-1})+ \cdots  +(b_{|\alpha_1|+1}-b_{|\alpha_1|})| \notag \\
		=&|2m+\Delta + (h-1)\Delta +2m + \Delta| = |4m+(h+1)\Delta| \notag \\
		=&|8qt+4p-2(h+1)t| =|(8q-2h-2)t + 4p|. \label{s1one}
	\end{align}
	
	Since $h \leq 2q-1$ and $p \geq 1$, we have
	\begin{align}
		&(8q-2h-2)t + 4p \geq (8q-4q+2-2)t + 4p \geq 4qt+2p+2  = 2m+2. \label{s1two}
	\end{align}
	
	By \eqref{s1one} and \eqref{s1two}, we have
	\begin{align}
		|b_{|\alpha_1|+|\alpha_{2}|+1}-b_{|\alpha_1|}| \geq  2m+2. \label{s1three}
	\end{align}
	
	Then by Claim 2 and \eqref{s1three}, there exists $k \in \{|\alpha_1|+|\alpha_{2}|+1, |\alpha_1|\}$ such that $\det(L_n(u,\sigma,v_k))> m^2=(n-1)^2$. Therefore, $L_n(u,\sigma) \notin \mathcal{D}_{n-1}$.
	
	\textbf{Subcase 1.3.2.2:} $i^{*} \neq 2$.
	
	Let 
	\[W=\begin{cases}
		X(t,\alpha_t) \cup \{u,v_n\}, & \text{if $i^{*}=1$};\\
		X(1,\alpha_1) \cup \cdots \cup X(i^{*}-2,\alpha_{i^{*}-2}) \cup \{u,v_n\}, & \text{if $i^{*} \geq 3$ and $i^{*}$ is odd}; \\
		X(1,\alpha_1) \cup \cdots \cup X(i^{*}-2,\alpha_{i^{*}-2}), & \text{if $i^{*} \geq 3$ and $i^{*}$ is even}.
	\end{cases}\]
	
	Then $L_n(u,\sigma)$ is switching equivalent to $L^{*}_n(u,\sigma)$ with respect to $W$ such that $L^{*}_n(u,\sigma)[X]$ is transitive with $X(t,\alpha_t) \rightarrow X(1,\alpha_1) \rightarrow \cdots \rightarrow X(t-1,\alpha_{t-1})$ if $i^{*}=1$, and $X(i^{*}-1,\alpha_{i^{*}-1}) \rightarrow X(i^{*},\alpha_{i^{*}}) \rightarrow \cdots \rightarrow X(t,\alpha_{t}) \rightarrow X(1,\alpha_{1}) \rightarrow \cdots \rightarrow X(i^{*}-2,\alpha_{i^{*}-2})$ if $i^{*} \geq 3$. Moreover, $\psi_{L^{*}_n(u,\sigma)}(v_n,X)=((-1)^{0},\ldots,(-1)^{m-1})$, and \[\psi_{L^{*}_n(u,\sigma)}(u,X)=(\beta_1,\beta_2,\ldots,\beta_{t})=\begin{cases}
		(\alpha_{t},-\alpha_1,\ldots,-\alpha_{t-1}), & \text{if $i^{*}=1$}; \\
		(-\alpha_{i^{*}-1},-\alpha_{i^{*}},\ldots, -\alpha_{t}, \alpha_{1}, \ldots, \alpha_{i^{*}-2}), & \text{if $i^{*} \geq 3$ and $i^{*}$ is odd}; \\
		(\alpha_{i^{*}-1},\alpha_{i^{*}},\ldots,\alpha_{t}, -\alpha_{1}, \ldots, -\alpha_{i^{*}-2}), & \text{if $i^{*} \geq 3$ and $i^{*}$ is even}.
	\end{cases}\]
	where $|\beta_2|=|\alpha_{i^{*}}|=h \leq 2q-1$, and $\beta_1 > 0$. Then $L^{*}_n(u,\sigma) \notin \mathcal{D}_{n-1}$ by Subcase 1.3.2.1, it follows that $L_n(u,\sigma) \notin \mathcal{D}_{n-1}$ by Corollary \ref{switchDk}.
	
	Therefore, $L_n(u,\sigma) \notin \mathcal{D}_{n-1}$ when $d=1$.
		
		Combining the above subcases, $L_n(u,\sigma) \notin \mathcal{D}_{n-1}$ when $t$ is odd.
		
		\textbf{Case 2:} $t$ is even.
		
		Let
		\[W=\begin{cases}
			X(t,\alpha_t)\cup \{v_n\}, & \text{if $\alpha_t$ is odd};\\
			X(t,\alpha_t), & \text{if $\alpha_t$ is even}.
		\end{cases}\]
		Then there exists a switch $L^{*}_n(u,\sigma)$ of $L_n(u,\sigma)$ with respect to $W$ such that $L^{*}_n(u,\sigma)[X]$ is transitive with $X(t,\alpha_t) \rightarrow X(1,\alpha_1) \rightarrow \cdots \rightarrow X(t-1,\alpha_{t-1})$, $\psi_{L^{*}_n(u,\sigma)}(v_n,X)=((-1)^{0},(-1)^{1},$ $\ldots,(-1)^{m-1})$, $\psi_{L^{*}_n(u,\sigma)}(u,X)=(|\alpha_t|+\alpha_1,\alpha_2,\ldots,\alpha_{t-1})$, where  $\alpha_1 > 0$ by assumption.
		
		Let $\beta_1=|\alpha_t|+\alpha_1$, $\beta_i=\alpha_i$ for $2 \leq i \leq t-1$. Then $\psi_{L^{*}_n(u,\sigma)}(u,X)=(\beta_1,\ldots,\beta_{t-1})$ with odd $t-1$ and $\beta_1 > 0$. By Case 1, we have $L^{*}_n(u,\sigma) \notin \mathcal{D}_{n-1}$, then $L_n(u,\sigma) \notin \mathcal{D}_{n-1}$ by Corollary \ref{switchDk}.
		
		Therefore, $L_n(u,\sigma) \notin \mathcal{D}_{n-1}$ when $t$ is even.
		
		Combining the above arguments, we complete the proof.
	\end{proof}
	
	\begin{proof}[{\bf Proof of Theorem \ref{allLnstrongCR}}]
	By Proposition \ref{L246}, $L_2$ is a strong CR tournament. Note that $L_3 \in \mathcal{D}_1$. Then a $1$-transitive blowup of $L_3$ is switching equivalent to a transitive $4$-tournament by Theorem \ref{D1} and Corollary \ref{crllblowupclass}. It is easy to check that a transitive $4$-tournament is a CR tournament (by direct checking). Thus a $1$-transitive blowup of $L_3$ is a CR tournament by Theorem \ref{thforswiso}, it follows that $L_3$ is a strong CR tournament by Proposition \ref{strongCR1}. Now we consider $n \geq 4$.
	
	If $n$ is even, then by Theorem \ref{evenLnbasicCR}, $L_n$ is a strong CR tournament.
	
	If $n$ is odd, then $L_n$ is switching equivalent to a $1$-transitive blowup of $L_{n-1}$ by Lemma \ref{Lnadd1switch}, and thus a $1$-transitive blowup of $L_n$ is switching equivalent to a transitive blowup of $L_{n-1}$ by Lemma \ref{lemtranblowswitcheq}. Then by Theorems \ref{thforswiso}, \ref{maintheorembasic} and \ref{evenLnbasicCR}, a $1$-transitive blowup of $L_n$ is a CR tournament, it follows that $L_n$ is a strong CR tournament by Proposition \ref{strongCR1}.
	
	This completes the proof.	
	\end{proof}

\section{An answer to Question \ref{queanyDktwo} and further questions}\label{sec-solutionquestion}
\hspace{1.5em}In this section, by using Theorems \ref{maintheorembasic} and \ref{evenLnbasicCR}, we show that a necessary and sufficient condition for Question \ref{queanyDktwo} is $T\in \xi(L_{k+1})$, and we propose several questions for further research.

\begin{theorem}\label{solutionque}
	Let $T \in \mathcal{D}_{k} \backslash \mathcal{D}_{k-2}$, where odd $k \geq 7$. Then $T$ is switching equivalent to a transitive blowup of $L_{k+1}$ if and only if $T \in \xi(L_{k+1})$.
\end{theorem}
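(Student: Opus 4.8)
The plan is to reduce the statement directly to Theorem \ref{maintheorembasic} applied with $H = L_{k+1}$, after checking that $L_{k+1}$ meets all of that theorem's hypotheses. First I would note that, since $k \geq 7$ is odd, $n := k+1$ is an even integer with $n \geq 8$, so Theorem \ref{evenLnbasicCR} applies and tells us that $L_{k+1}$ is a basic strong CR tournament; in particular it is a basic tournament. Moreover, Theorem \ref{Lntheorem1} (with $n = k+1$) gives $\det(L_{k+1}) = k^2$ and $L_{k+1} \in \mathcal{D}_{k} \backslash \mathcal{D}_{k-2}$. Thus $L_{k+1}$ is a basic tournament lying in $\mathcal{D}_{k} \backslash \mathcal{D}_{k-2}$ which satisfies assertion (i) of Theorem \ref{maintheorembasic}.

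Next I would invoke the equivalence ${\rm(i)} \Leftrightarrow {\rm(iii)}$ of Theorem \ref{maintheorembasic} with $H = L_{k+1}$: since (i) holds, (iii) holds, i.e.\ for every tournament $T$ we have $T \in \xi(L_{k+1}) \cap (\mathcal{D}_{k} \backslash \mathcal{D}_{k-2})$ if and only if $T$ is switching equivalent to a transitive blowup of $L_{k+1}$.

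Finally I would deduce the claimed biconditional from this, using the standing assumption $T \in \mathcal{D}_{k} \backslash \mathcal{D}_{k-2}$. For the ``only if'' direction: if $T$ is switching equivalent to a transitive blowup $T'$ of $L_{k+1}$, then choosing one vertex from each part of the blowup exhibits a subset $X$ with $T'[X]$ isomorphic to $L_{k+1}$; by Lemma \ref{Minors}, $T[X]$ is switching equivalent to $T'[X]$, hence switching isomorphic to $L_{k+1}$, so $T \in \xi(L_{k+1})$. For the ``if'' direction: if $T \in \xi(L_{k+1})$, then together with $T \in \mathcal{D}_{k} \backslash \mathcal{D}_{k-2}$ we get $T \in \xi(L_{k+1}) \cap (\mathcal{D}_{k} \backslash \mathcal{D}_{k-2})$, and Theorem \ref{maintheorembasic}(iii) yields that $T$ is switching equivalent to a transitive blowup of $L_{k+1}$.

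As for the main obstacle: essentially none remains at this stage, since all the substantive work has been front-loaded into Theorem \ref{evenLnbasicCR} (whose proof occupies Subsection \ref{subsecLn-pfeven}) and into Theorem \ref{maintheorembasic}. The only points that require a moment's care are verifying that $L_{k+1}$ lies in $\mathcal{D}_{k} \backslash \mathcal{D}_{k-2}$ rather than merely in $\mathcal{D}_{k}$ (which is exactly Theorem \ref{Lntheorem1}) and that $L_{k+1}$ is a basic strong CR tournament for every even $k+1 \geq 8$ (Theorem \ref{evenLnbasicCR}); once these inputs are in hand, the present theorem is a one-line corollary.
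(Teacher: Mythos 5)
Your proposal is correct and follows essentially the same route as the paper: both reduce the statement to Theorem \ref{maintheorembasic} with $H=L_{k+1}$, using Theorem \ref{evenLnbasicCR} (together with Theorem \ref{Lntheorem1}) to verify that $L_{k+1}$ is a basic strong CR tournament in $\mathcal{D}_{k}\backslash\mathcal{D}_{k-2}$. Your write-up is merely a little more explicit about the easy direction (exhibiting a copy of $L_{k+1}$ inside a transitive blowup) than the paper's one-line treatment.
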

\begin{proof}
	If $T$ is switching equivalent to a transitive blowup of $L_{k+1}$, then it is clear that $T \in \xi(L_{k+1})$. If $T \in \xi(L_{k+1})$, then by $T \in \mathcal{D}_{k} \backslash \mathcal{D}_{k-2}$, Theorems \ref{evenLnbasicCR} and \ref{maintheorembasic}, $T$ is switching equivalent to a transitive blowup of $L_{k+1}$. This completes the proof.
\end{proof}

In Section \ref{sec-allLn}, we show all $L_n$ are strong CR tournaments. Note that {\rm (i)} of Theorem \ref{maintheorembasic} requires $H$ to be a strong CR tournament, rather than merely a CR tournament. A natural question is that which CR tournaments are strong CR tournaments.

However, as shown in Section \ref{sec-allLn}, all $L_n$ are strong CR tournaments. Moreover, after examining several low-order CR tournaments, we have not found any instance where a CR tournament fails to be a strong CR tournament. Hence we further propose the following question.

\begin{question}\label{allCRstrong}
	Is every CR tournament a strong CR tournament?
\end{question}

\begin{question}\label{sufficientCR}
	If there exists a CR tournament that is not a strong CR tournament, find some sufficient conditions, necessary conditions, necessary and sufficient conditions for a CR tournament to be a strong CR tournament.
\end{question}

\begin{remark}\label{remarkstrongCR}
	If all CR tournaments are strong CR tournaments, then it is easy to see that the property of ``being a CR tournament'' is an invariant under transitive blowup operation.
\end{remark}

By Theorem \ref{Dthree} and Theorem \ref{D5character}, $\mathcal{D}_3 \backslash \mathcal{D}_1$ and $\mathcal{D}_5 \backslash \mathcal{D}_3$ are characterized by the transitive blowups of a basic tournament ($L_4$ and $L_6$, respectively). Furthermore, we propose the following question.

\begin{question}\label{Dkfinitebaisc}
	Let $k \geq 7$ be a positive odd integer. Can we find a finite number of basic tournaments $H_1,\ldots,H_m \in \mathcal{D}_{k} \backslash \mathcal{D}_{k-2}$ such that a tournament $T \in \mathcal{D}_{k} \backslash \mathcal{D}_{k-2}$ if and only if $T$ is switching equivalent to a transitive blowup of some $H_i \in \{H_1,\ldots,H_m\}$?
\end{question}

\begin{remark}
	By Theorem \ref{maintheorembasic}, a natural question arises: can we find a finite set of basic strong CR tournaments $H_1,\ldots,H_m \in \mathcal{D}_{k} \backslash \mathcal{D}_{k-2}$ such that a tournament $T \in \mathcal{D}_{k} \backslash \mathcal{D}_{k-2}$ if and only if $T$ is switching equivalent to a transitive blowup of some $H_i$? However, this turns out to be false. For example, the tournament $T'$ shown in \eqref{exampleT} is a basic tournament and satisfies $T'\in \mathcal{D}_7 \backslash \mathcal{D}_5$, but $T'$ is not a strong CR tournament, and it can not be switching equivalent to a transitive blowup of any tournament with order fewer than $6$. Therefore, $\mathcal{D}_{7}\backslash \mathcal{D}_{5}$ cannot be characterized by a finite set of transitive blowups of basic strong CR tournaments. Interestingly, if we repeatedly add non-CR vertices to $T'$ and its successors while ensuring the generated tournament remains in $\mathcal{D}_{7}\backslash \mathcal{D}_{5}$, then after finitely many steps, the resulting tournament becomes a basic strong CR tournament (verified by computer checking). This suggests that Question \ref{Dkfinitebaisc} is both interesting and worthy of further exploration.
\end{remark}

\vspace{2em}
\noindent
{\bf Acknowledgments}\,

This work is supported by the National Natural Science Foundation of China (Grant Nos.12371347, 12271337).

\end{document}